\title{Tropicalizing the Space of Admissible Covers}
\author{Renzo Cavalieri, Hannah Markwig, and Dhruv Ranganathan}
\date{\today}
\address{Department of Mathematics, Colorado State University}
\email{renzo@math.colostate.edu}
\address{Fachrichtung Mathematik, Universit\"at des Saarlandes}
\email{hannah@math.uni-sb.de}
\address{Department of Mathematics, Yale University}
\email{dhruv.ranganathan@yale.edu}
\newtheorem{theorem}{Theorem}
\newtheorem{corollary}[theorem]{Corollary}
\newtheorem{lemma}[theorem]{Lemma}
\newtheorem{proposition}[theorem]{Proposition}
\newtheorem{definition}[theorem]{Definition}
\newenvironment{remark}{\begin{rem1}\em}{\end{rem1}}
\newenvironment{warning}{\begin{war1}\em}{\end{war1}}
\newenvironment{example}{\begin{exa1}\em}{\end{exa1}}
\newenvironment{convention}{\begin{conv1}\em}{\end{conv1}}
\newtheorem{rem1}[theorem]{Remark}
\newtheorem{war1}[theorem]{Warning}
\newtheorem{exa1}[theorem]{Example}
\newtheorem{conv1}[theorem]{Convention}
\newcommand{\A}{{\mathbb{A}}}           
\newcommand{\CC} {{\mathbb C}}
\newcommand{\PP}{\mathbb{P}}         
\newcommand{\RR} {{\mathbb R}}		
\newcommand{\ZZ} {{\mathbb Z}}
\DeclareMathOperator{\spec}{Spec}
\newcommand{\M}{\overline{{M}}}
\newcommand{\cal}{\mathcal}
\def\cH{{\cal H}}
\def\cM{{\cal M}}
\def\cO{{\cal O}}
\newcommand{\Mbar}{\overline{\cM}}
\newcommand{\Hbar}{\overline{\cH}}
\newcommand{\HMbar}{\overline{\cH\cM}}
\begin{document}

\pagestyle{plain}
\maketitle

\begin{abstract}
We study the relationship between tropical and classical Hurwitz moduli spaces. Following recent work of Abramovich, Caporaso and Payne, we outline a tropicalization for the moduli space of generalized Hurwitz covers of an arbitrary genus curve. Our approach is to appeal to the geometry of  admissible covers, which compactify the Hurwitz scheme. We study the relationship between a combinatorial moduli space of tropical admissible covers and the skeleton of the Berkovich analytification of the classical space of admissible covers. We use techniques from non-archimedean geometry to show that the tropical and classical tautological maps are compatible via tropicalization, and that the degree of the classical branch map can be recovered from the tropical side. As a consequence, we obtain a proof, at the level of moduli spaces, of the equality of classical and tropical Hurwitz numbers.
\end{abstract}

\setcounter{tocdepth}{1}
\tableofcontents
\pagebreak

\section{Introduction}
The primary objective of this paper is to establish a geometric and functorial relationship between the moduli space of Hurwitz covers of an algebraic curve, and a combinatorial moduli space of tropical Hurwitz covers\footnote{The notion of a tropical Hurwitz cover for  discrete graphs was introduced by Caporaso in \cite{cap:gonality}, in order to study  the gonality of graphs.  In particular she characterizes which graphs covering a tree are dual graphs of a classical admissible cover. }. Such a relationship is given by a morphism of cone complexes from the skeleton of the Berkovich analytification of the space of admissible covers to the moduli space of tropical Hurwitz covers. We show that tropicalization commutes with the natural tautological source and branch maps on these moduli spaces. Consequently, we recover the equality between tropical and classical Hurwitz numbers, originally proved via combinatorial and topological methods in~\cite{BBM, CJM1}.

\subsection{Results}

Fix a vector of partitions $\vec \mu = (\mu^1,\ldots, \mu^r)$ of an integer $d>0$. Denote by  $\cH_{g\to h,d}(\vec \mu)$ the space of degree $d$ Hurwitz  covers $[D\to C]$  of smooth genus $h$ curves by genus $g$ curves with ramification $\mu^i$ over smooth marked points $p_i$ of $C$, and simple ramification over smooth marked points $q_1,\ldots, q_s$,  and by $\Hbar_{g\to h,d}(\vec \mu)$ its admissible cover compactification.  We denote by $\Hbar^{an}_{g\to h,d}(\vec\mu)$ (resp. $\Mbar^{an}_{g,n}$) the Berkovich analytification of the space of admissible covers (resp. the moduli space of stable curves). We  use  $\cH^{trop}_{g\to h,d}(\vec \mu)$ to denote the space of tropical admissible covers of genus $h$ tropical curves by genus $g$ tropical curves, with expansion factors along infinite edges prescribed by $\vec\mu$. See Sections~\ref{sec: comb-const}, \ref{sec: bkgrnd} for definitions and background. 

Our first result studies the relationship between the  set theoretic tropicalization map from $\cH^{an}_{g\to h,d}(\vec\mu)$ to $\cH^{trop}_{g\to h,d}(\vec\mu)$ (see Definition~\ref{def: trop}) and the canonical projection to the skeleton from the space $\cH^{an}_{g\to h,d}(\vec\mu)$.

\begin{theorem}~\label{thm: theorem1}
The set theoretic tropicalization map $trop: \cH^{an}_{g\to h, d}(\vec\mu)\to \cH^{trop}_{g\to h, d}(\vec\mu)$ factors through the canonical projection from the analytification to its skeleton $\Sigma(\Hbar^{an}_{g\to h,d}(\vec\mu))$,
\begin{equation}
\begin{tikzcd}
\cH^{an}_{g\to h, d}(\vec \mu) \arrow{rr}{trop} \arrow{dr}[swap]{\bm p_H} & &  \cH^{trop}_{g\to h,d}(\vec \mu) \\
& \Sigma(\Hbar^{an}_{g\to h, d}(\vec \mu)) \arrow{ur}[swap]{trop_{\Sigma}}. &
\end{tikzcd}
\label{diag:tropcomp}
\end{equation}
Furthermore the map $trop_{\Sigma}$ is a surjective face morphism of cone complexes, i.e. the restriction of $trop_\Sigma$ to any cone of $\Sigma(\Hbar^{an}_{g\to h,d}(\vec\mu))$ is an isomorphism onto a cone of the tropical moduli space $\cH^{trop}_{g\to h,d}(\vec\mu)$. The map $trop_\Sigma$ extends naturally and uniquely to the extended complexes $\overline{\Sigma}(\Hbar^{an}_{g\to h,d}(\vec\mu))\to \Hbar^{trop}_{g\to h,d}(\vec\mu)$. 
\end{theorem}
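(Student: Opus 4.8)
The plan is to establish the factorization first, then analyze the map $trop_\Sigma$ cone by cone, and finally deal with the extended complexes. Since the space of admissible covers $\Hbar_{g\to h,d}(\vec\mu)$ is a smooth Deligne--Mumford stack with normal crossings boundary (or a toroidal embedding with appropriate care about the stacky structure, following Abramovich--Caporaso--Payne), its analytification carries a canonical retraction $\bm p_H$ onto its skeleton $\Sigma(\Hbar^{an})$, and the skeleton is identified with the cone complex recording the dual graphs of boundary strata together with edge-length data. The key observation for the factorization is that the tropicalization map $trop$ of Definition~\ref{def: trop} is built entirely from the combinatorics of the dual graph of the admissible cover together with valuations of smoothing parameters at the nodes, i.e. from data that is constant on the fibers of $\bm p_H$. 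Concretely, a point of $\Hbar^{an}$ is represented by an admissible cover over a valued field extension; its associated tropical admissible cover depends only on the dual graph, the edge expansion factors, and the valuations of the node smoothing parameters, and the latter are precisely the coordinates on the cone of $\Sigma(\Hbar^{an})$ containing $\bm p_H$ of the point. Thus the composite $trop_\Sigma \circ \bm p_H$ recovers $trop$; one must check that the definition of $trop_\Sigma$ on a cone is independent of choices, which follows from the cone complex structure on the skeleton being intrinsic.

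Next I would identify, for each boundary stratum of $\Hbar_{g\to h,d}(\vec\mu)$, the corresponding cone of $\Sigma(\Hbar^{an})$ and the cone of $\cH^{trop}_{g\to h,d}(\vec\mu)$ it maps to under $trop_\Sigma$. A boundary stratum is indexed by a stable admissible cover $[D_0 \to C_0]$ of nodal curves; its dual data is a harmonic morphism of graphs $\Gamma_D \to \Gamma_C$ with expansion factors on edges, which is exactly the combinatorial type of a tropical admissible cover. The cone of the skeleton attached to this stratum is $\RR_{\geq 0}^{E(\Gamma_C)}$ glued appropriately (the lengths of the edges of the target tropical curve determine the lengths upstairs via the expansion factors), and this matches the cone of $\cH^{trop}$ of the same combinatorial type: a tropical admissible cover of a fixed combinatorial type is determined by the edge lengths of its target. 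The map $trop_\Sigma$ restricted to this cone is the identity on edge-length coordinates, hence an isomorphism onto the corresponding cone of $\cH^{trop}$. Surjectivity then amounts to showing every tropical admissible cover combinatorial type arises from an actual admissible cover over a nodal curve, i.e. is realizable; this is where I would invoke the characterization of which harmonic morphisms of graphs lift to admissible covers — precisely Caporaso's result cited in the introduction for target trees, together with the stronger realizability statements needed for arbitrary genus targets that the paper must establish in its combinatorial sections.

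For the extension to the extended complexes $\overline{\Sigma}(\Hbar^{an})$, I would use that the extended cone complex is obtained by allowing edge lengths in $\overline{\RR}_{\geq 0} = \RR_{\geq 0} \cup \{\infty\}$, corresponding to the compactification where the infinite edges (those carrying the prescribed expansions $\vec\mu$) can have infinite length. Since $trop_\Sigma$ is a face morphism given in coordinates by the identity on each cone, it extends termwise to the partial compactifications $\overline{\RR}_{\geq 0}^{E}$, and uniqueness follows because the extended complex is the closure of the open cone complex and $trop_\Sigma$ is continuous. One subtlety: one must check compatibility of these extensions across the gluing maps that assemble the cones into $\overline{\Sigma}$ and into $\Hbar^{trop}$, but since both gluings are dictated by the same combinatorial contractions of dual graphs, the extensions agree.

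The main obstacle I expect is the surjectivity/realizability step: showing that $trop_\Sigma$ hits every cone of $\cH^{trop}_{g\to h,d}(\vec\mu)$ requires that every tropical admissible cover — a harmonic morphism of abstract tropical curves with the prescribed local Riemann--Hurwitz conditions — is the tropicalization of an algebraic admissible cover. For tropical target curves this is a genuine lifting problem for coverings of graphs, and the genus bookkeeping (local Hurwitz numbers at vertices, the hyperelliptic-type obstructions that appear already in Caporaso's analysis of graph gonality) must be handled carefully. Everything else — the factorization through the skeleton, the cone-wise isomorphism, the extension — is essentially formal given the toroidal structure of $\Hbar_{g\to h,d}(\vec\mu)$ and the explicit coordinate description of both cone complexes.
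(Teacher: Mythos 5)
Your overall strategy coincides with the paper's: use the toroidal structure and the Abramovich--Caporaso--Payne machinery to decompose the skeleton into cones indexed by boundary strata, identify the cone coordinates with valuations of the node-smoothing parameters, and observe that $trop$ only depends on these, so it factors through $\bm p_H$ and is an isomorphism cone by cone. However, there are three points where your write-up goes astray. First, and most substantively, your treatment of surjectivity misreads the construction of the target space. You present realizability of an arbitrary harmonic morphism satisfying local Riemann--Hurwitz as ``the main obstacle,'' requiring a lifting theorem for graph coverings. But $\cH^{trop}_{g\to h,d}(\vec\mu)$ is \emph{defined} as a colimit only over combinatorial types $\Theta$ for which every local Hurwitz number $H(v)$ is nonzero; the Hurwitz existence problem is deliberately sidestepped by this restriction. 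If the tropical moduli space contained all harmonic morphisms with the local Riemann--Hurwitz condition, $trop_\Sigma$ would in general \emph{fail} to be surjective. With the restriction in place, surjectivity is nearly immediate: a type with all $H(v)\neq 0$ admits a nodal admissible cover over $\CC$ realizing it, hence a nonempty stratum of $\Hbar_{g\to h,d}(\vec\mu)$, and the corresponding cone of the skeleton maps onto $\sigma_\Theta/Aut(\Theta)$.

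Second, your description of the extended complexes inverts the roles of the edges: the extension allows the \emph{interior} (bounded) edges --- those dual to nodes, whose lengths are the cone coordinates $\nu(f_i)$ --- to acquire length $\infty$; the legs carrying the profiles $\vec\mu$ are metrized as $[0,\infty]$ from the outset and contribute no coordinates. The termwise extension over $\overline{\RR}_{\geq 0}^{E}$ is correct only once $E$ is taken to be the set of interior edges of the target graph. Third, the cone-wise isomorphism is not literally ``the identity on edge-length coordinates'' between affine cones: both sides are generalized cones, namely quotients of $\sigma_\Theta$ by $Aut(\Theta)$, and one must invoke the identification (via~\cite[Proposition~6.2.6]{ACP}) of the monodromy group acting on the skeleton cone of a stratum with the automorphism group of the combinatorial type in order to conclude that the induced map of quotients is an isomorphism. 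Finally, the paper emphasizes --- and your argument should at least acknowledge --- that $trop_\Sigma$ is many-to-one on cones, since several distinct strata (from multiple algebraic lifts of a type, and from the multiple branches of the normalization of the Harris--Mumford space) share a single combinatorial type; this does not affect the statement being proved, but it is the reason the conclusion is only ``surjective face morphism'' rather than ``isomorphism.''
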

The map $trop$ depends on the choice of the admissible cover compactification even when restricted to the analytification of the Hurwitz space. Intuitively, one may think of a point in $\cH^{an}_{g\to h, d}(\vec\mu)$ as a family of smooth covers over a punctured disk. The tropicalization of such point is obtained by extending the family to an admissible cover and metrizing the dual graph of the central fibers by the valuations of the smoothing parameters of the nodes.

The Hurwitz moduli spaces and their compactifications contain information about  the enumerative geometry of target curves. The (classical)  Hurwitz number $h_{g\to h,d}(\vec\mu)$ counts the number of covers with the above invariants and a fixed branch divisor.
Note that the definition of $\cH^{trop}_{g\to h,d}(\vec\mu)$ involves the algebraic data of triple Hurwitz numbers, as in~\cite{BM13}, closely related to the Hurwitz existence problem (\cite[Section 2.2]{cap:gonality}). This discussed in Section~\ref{sec: bkgrnd}.

Our next result shows that the tropical moduli space (and skeleton)  contain enough information to recover Hurwitz numbers.


\begin{theorem}\label{thm:branchdegree}
Let $\sigma_\Gamma$ be any fixed top dimensional cone of the tropical moduli space $\mathcal{M}^{trop}_{h,r+s}$. Denote by $\sigma^{\Hbar}_\Theta\mapsto \sigma_\Gamma$ a  cone  in the moduli space $\cH^{trop}_{g,d}(\vec \mu)$ of combinatorial type $\Theta$ such that the base graph of $\Theta$  is equal to $\Gamma$. The restriction of the tropical branch map is a surjective morphism of cones with integral structure of the same dimension, and consequently has a dilation factor which we denote $d_\Theta(br)$.

Refer to Definition~\ref{wetheta} for the precise definition of the  \textit{weight} $\omega(\Theta)$. Informally, this weight is a product of ``local Hurwitz numbers'', expansion factors along bounded edges, and an automorphism factor.

Then the  Hurwitz number is equal to: 
\begin{equation}
h_{g\to h,d}(\vec\mu) = \sum_{\sigma^{\Hbar}_\Theta\mapsto \sigma_\Gamma} \omega(\Theta)\cdot d_\Theta(br^{trop}).
\label{eq:hurn}
\end{equation}
\end{theorem}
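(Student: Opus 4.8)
The plan is to compute the Hurwitz number $h_{g\to h,d}(\vec\mu)$ by intersecting with a fiber of the branch map, and then transport this computation to the tropical side via Theorem~\ref{thm: theorem1}. First I would recall that the Hurwitz number is the degree of the (finite, after base change by a suitable \'etale cover trivializing automorphisms) branch morphism $br\colon \Hbar_{g\to h,d}(\vec\mu)\to \Mbar_{h,r+s}$, computed as the number of preimages of a general point $y\in \Mbar_{h,r+s}$, each counted with the appropriate automorphism weight. The key idea is to choose $y$ to be a point lying deep in the boundary stratum corresponding to the chosen top-dimensional cone $\sigma_\Gamma$ of $\Mbar^{trop}_{h,r+s}$, i.e. a maximally degenerate stable curve of combinatorial type $\Gamma$, so that counting preimages becomes a combinatorial problem.

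Next I would pass to the non-archimedean picture. A point of $\Hbar^{an}_{g\to h,d}(\vec\mu)$ lying over (the analytification of) such a maximally degenerate $y$ corresponds to a family of admissible covers over a valuation ring whose central fiber is an admissible cover of the degenerate curve; by Theorem~\ref{thm: theorem1} its image under $trop$ lies in a cone $\sigma^{\Hbar}_\Theta$ with base graph $\Gamma$, and the point of $\Mbar^{trop}_{h,r+s}$ it maps to under $br^{trop}$ is the point of $\sigma_\Gamma$ recording the edge lengths (valuations of node smoothing parameters) of $y$. Since $y$ is general in its stratum, this tropical point is general in $\sigma_\Gamma$, so the contribution of each combinatorial type $\Theta$ over $\sigma_\Gamma$ is governed by the corresponding fiber of the linear map $br^{trop}\colon \sigma^{\Hbar}_\Theta\to \sigma_\Gamma$. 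Since $trop_\Sigma$ restricts to an isomorphism on each cone (Theorem~\ref{thm: theorem1}) and both cones have the same dimension (the hypothesis that $\sigma_\Gamma$ is top-dimensional and the source-target dimension count for admissible covers of a genus-$h$ curve by a genus-$g$ curve with the prescribed ramification), the fiber of $br^{trop}$ over a general point of $\sigma_\Gamma$ is a finite set of cardinality equal to the lattice index $d_\Theta(br^{trop})$.

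Then I would count, for a fixed $\Theta$, the admissible covers of the fixed degenerate curve $y$ that can appear as central fibers of a family tropicalizing into $\sigma^{\Hbar}_\Theta$. Such an admissible cover is assembled by gluing together, vertex by vertex of $\Gamma$, local covers of the components of $y$: over each vertex one chooses a connected admissible cover of the corresponding rational curve (or, in the stabilization, of $\PP^1$ with marked points and nodes) with the prescribed local monodromy data; the number of ways to do so, summed over genus distributions and weighted by automorphisms, is precisely the ``local Hurwitz number'' appearing in $\omega(\Theta)$. The expansion factors along bounded edges contribute in two ways: as multiplicities in the gluing count (the standard $\prod w_e$ factor coming from the number of ways to glue branches meeting at a node of the target with prescribed ramification), and as the cause of the index $d_\Theta(br^{trop})$, since the lattice map $br^{trop}$ on the level of bounded edges is multiplication by the edge expansion factors after accounting for the cover's edge lengths being determined by the target's. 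Combining: the number of preimages of $y$ of a given type $\Theta$, counted with automorphism weights, equals $\omega(\Theta)\cdot d_\Theta(br^{trop})$, and summing over all $\Theta$ with base graph $\Gamma$ gives~\eqref{eq:hurn}.

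The main obstacle will be the careful bookkeeping of the automorphism factors and the matching of lattice indices: one must show that the naive count of central-fiber admissible covers over $y$, with each counted by $1/|\Aut|$ as in the definition of the Hurwitz number, reorganizes exactly into the product of local Hurwitz numbers times $d_\Theta(br^{trop})$, i.e. that the ``global'' automorphisms of an admissible cover over the degenerate base are accounted for by the local automorphism factors together with the edge-expansion contributions to the dilation factor, with no overcounting or undercounting. This amounts to a local-to-global principle for admissible covers over a maximally degenerate base, which I would establish by analyzing the structure of $\Hbar_{g\to h,d}(\vec\mu)$ near the corresponding boundary stratum — using that this boundary stratum is (up to finite group actions) a product of Hurwitz spaces of $\PP^1$ indexed by the vertices of $\Gamma$ — combined with the deformation theory of nodes relating smoothing parameters upstairs and downstairs via the expansion factors, which is exactly what makes $br^{trop}$ a lattice map with index $d_\Theta(br^{trop})$.
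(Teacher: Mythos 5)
Your overall strategy --- degenerate the base to a maximally degenerate curve of type $\Gamma$, compute the degree of $br$ in a formal/analytic neighbourhood of the corresponding boundary point, and match the local contributions against $\omega(\Theta)\cdot d_\Theta(br^{trop})$ --- is exactly the route the paper takes, and you correctly identify the two main inputs: the (near-)product structure of the boundary stratum in terms of local Hurwitz spaces, and the deformation theory of the nodes.

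The one step that does not work as written, and which is in fact the crux of the theorem, is your claim that the expansion factors enter ``as multiplicities in the gluing count.'' Since all preimages of branch points are marked and the combinatorial type prescribes the matching of half-edges at each node, there is no gluing multiplicity: the weighted number of Harris--Mumford covers of the fixed nodal curve $C$ with type $\Theta$ is just $\tfrac{1}{|Aut_0(\Theta)|}\prod_v H(v)$. The entire product $\prod_e\prod_j w_{e,j}$ of expansion factors instead comes out of the local ring $\CC\llbracket\xi,\widetilde\xi_j\rrbracket/(\widetilde\xi_j^{\,w_{e,j}}-\xi)$ at each node, and the formula rests on factoring it edge by edge as $N_e\cdot M_e$, where $N_e=\mathrm{lcm}_j(w_{e,j})$ and $M_e=\prod_j w_{e,j}/N_e$: the normalization $\Hbar_{g\to h,d}(\vec\mu)\to\HMbar_{g\to h,d}(\vec\mu)$ has exactly $M_e$ analytic branches over such a point (this multiplicity of cones of the skeleton over a single tropical cone is what (W3) in $\omega(\Theta)$ records), while on each branch the target smoothing parameter pulls back to the $N_e$-th power of the normalized coordinate, which is why $d_\Theta(br^{trop})=\prod_e N_e$ and not the full product of weights. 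If you carry your attribution through literally you either double count by $\prod_e N_e$ or leave the identity $\text{(local degree)}=\omega(\Theta)\,d_\Theta(br^{trop})$ unproven; replacing the ``gluing multiplicity'' by this normalization/LCM factorization closes the gap and recovers the paper's argument.
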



When $h = 0$, and the profile $\vec \mu$ contains only two non-simple ramification types, the above theorem fully determines these Hurwitz numbers (\textit{double} Hurwitz numbers) in terms of purely tropical computations. For general $\vec\mu$, the result gives an explicit decomposition formula for the Hurwitz number $h_{g\to h,d}(\vec\mu)$ in terms of \textit{triple Hurwitz numbers}, i.e. Hurwitz numbers with 3 non-simple profiles.

The  right hand side of \eqref{eq:hurn} coincides with the definition of tropical Hurwitz numbers in \cite{BBM}. Theorem~\ref{thm:branchdegree} then provides a geometric proof for the following correspondence theorem.

\begin{theorem}[Bertrand--Brugall\'e--Mikhalkin~\cite{BBM}]\label{thm-BBM}
 Classical and tropical Hurwitz numbers coincide, i.e.\ we have $h_{g\to h,d}(\vec\mu)=h^{trop}_{g\to h,d}(\vec\mu)$.
\end{theorem}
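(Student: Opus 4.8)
The plan is to recognize that Theorem~\ref{thm:branchdegree} has already carried out the substantive work, and that what remains is a translation between two bookkeeping conventions. Indeed, the right-hand side of~\eqref{eq:hurn} is, essentially by construction, the combinatorial invariant that Bertrand--Brugall\'e--Mikhalkin adopt as the \emph{definition} of the tropical Hurwitz number, so the proof amounts to a careful dictionary. First I would recall the definition of $h^{trop}_{g\to h,d}(\vec\mu)$ from~\cite{BBM}: fix a generic configuration of $r+s$ points on a tropical genus-$h$ curve --- equivalently, a generic point of a chosen top-dimensional cone $\sigma_\Gamma$ of $\mathcal{M}^{trop}_{h,r+s}$ --- and count tropical admissible covers with the prescribed ramification profiles $\vec\mu$ lying over this configuration, each weighted by a product of local (vertex) Hurwitz numbers and expansion factors along bounded edges, divided by the order of the automorphism group of the cover.

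Next I would match this count with the sum in~\eqref{eq:hurn}. For a fixed generic point of $\sigma_\Gamma$, every tropical cover lying over it sits in the relative interior of a unique cone $\sigma^{\Hbar}_\Theta$ mapping to $\sigma_\Gamma$, and since by Theorem~\ref{thm:branchdegree} the tropical branch map restricts on $\sigma^{\Hbar}_\Theta$ to a surjective morphism of cones with integral structure of equal dimension, the number of such covers inside $\sigma^{\Hbar}_\Theta$ is precisely the dilation factor $d_\Theta(br^{trop})$. Unwinding Definition~\ref{wetheta}, the weight $\omega(\Theta)$ is a product of local Hurwitz numbers at the vertices, expansion factors along bounded edges, and an automorphism factor --- that is, it is designed to be the BBM multiplicity of any cover of combinatorial type $\Theta$. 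Hence $\sum_{\sigma^{\Hbar}_\Theta \mapsto \sigma_\Gamma} \omega(\Theta)\, d_\Theta(br^{trop})$ is exactly the BBM weighted count of tropical covers over the chosen configuration, i.e. $h^{trop}_{g\to h,d}(\vec\mu)$. Combined with Theorem~\ref{thm:branchdegree}, this yields $h_{g\to h,d}(\vec\mu) = h^{trop}_{g\to h,d}(\vec\mu)$. As a sanity check, one observes that the left-hand side manifestly does not depend on $\sigma_\Gamma$, hence neither does the right-hand side, which is consistent with (and reproves, at the level of moduli) the deformation invariance of tropical Hurwitz numbers.

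The main obstacle will be making the middle paragraph precise without double-counting: automorphisms of a tropical cover enter the cone picture both through the integral structure (contributing to the lattice index computed by $d_\Theta(br^{trop})$) and through the stabilizer bundled into $\omega(\Theta)$, whereas in the BBM formulation they appear as a single denominator. So one must verify that the product $\omega(\Theta)\cdot d_\Theta(br^{trop})$, computed with the normalization of the integral lattices fixed in the construction of $\cH^{trop}_{g\to h,d}(\vec\mu)$ in Section~\ref{sec: comb-const}, genuinely equals the sum over covers $\varphi$ of type $\Theta$ over the fixed point of $\big(\text{product of local Hurwitz numbers and bounded-edge weights}\big)/|\Aut(\varphi)|$. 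This is a finite, local check at each cone, but it is where the conventions of Definition~\ref{wetheta} and the integral structures on the cones must be deployed with care.
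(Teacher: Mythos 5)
Your overall strategy coincides with the paper's: invoke Theorem~\ref{thm:branchdegree} for the identity $h_{g\to h,d}(\vec\mu)=\sum_{\Theta}\omega(\Theta)\,d_\Theta(br^{trop})$, and then identify each summand $\omega(\Theta)\,d_\Theta(br^{trop})$ with the Bertrand--Brugall\'e--Mikhalkin multiplicity \eqref{multmult} of the cover of type $\Theta$ over a fixed generic target. However, your middle paragraph contains a genuine error: the claim that ``the number of such covers inside $\sigma^{\Hbar}_\Theta$ is precisely the dilation factor $d_\Theta(br^{trop})$'' is false. By Lemma~\ref{lem:det}, a metrization of the target determines the cover of type $\Theta$ uniquely, so the restriction of $br^{trop}$ to $\sigma^{\Hbar}_\Theta$ is a set-theoretic bijection onto $\sigma_\Gamma$, and the fiber over a generic point consists of a \emph{single} tropical cover. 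The dilation factor is not a count of tropical preimages but a lattice index: the integral structure on $\sigma^{\Hbar}_\Theta$ is carried by the valuations of the deformation parameters $\widetilde{\xi}_i$ of the cover, which satisfy $br^*(\xi_i)=\widetilde{\xi}_i^{N_i}$ with $N_i$ the LCM of the ramification indices over the $i$th node (Section~\ref{sec: def2}), so that $d_\Theta(br^{trop})=\prod_i N_i$. Its enumerative meaning is as the local degree of the \emph{algebraic} branch map on the corresponding polyhedral domain, not as the cardinality of a tropical fiber.

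Consequently the decisive step --- which you correctly flag as ``the main obstacle'' but do not carry out --- is the verification that $\omega(\Theta)\cdot d_\Theta(br^{trop})$ equals the multiplicity \eqref{multmult}. Once one knows $d_\Theta(br^{trop})=\prod_i N_i$, this is a one-line check: the factor (W3) of Definition~\ref{wetheta} is $\prod_e M_e$ with $M_e$ the product of the expansion factors over $e$ divided by their LCM $N_e$, hence $\prod_e M_e N_e=\prod_{e'}d_{e'}(\theta)$, the product of expansion factors over all interior edges of $\Gamma_{src}$; together with (W1) and (W2) this reproduces \eqref{multmult} exactly. This is precisely how the paper closes the argument, by reference to the computation in Section~\ref{sec: CJM}. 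Without pinning down the value of $d_\Theta(br^{trop})$ via the deformation-theoretic relation $br^*(\xi_i)=\widetilde{\xi}_i^{N_i}$, your ``finite, local check'' cannot be completed, and under the fiber-counting interpretation you propose it would come out wrong.
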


For us Theorem~\ref{thm:branchdegree}  is a consequence of the functoriality of the tropicalization map. More precisely, the  Hurwitz number arises geometrically as the degree of a tautological map called the \textit{branch map}, which takes  a cover to its base curve, marked at its branch points.  The branch degree is computed in Section~\ref{subsec-BBM} by a computation on polyhedral domains in analytifications of formal tori. This provides a conceptual explanation for the determinantal formulas for tropical multiplicities obtained in~\cite{ CJM1}, and related works. In particular, the result seems adaptable to more general moduli spaces of maps.

We also study the source map, taking a cover to its source curve, marked at the entire inverse image of the branch locus. We have the diagram:
\[
\begin{tikzcd}
\Hbar_{g\to h, d}(\vec \mu) \arrow{r}{src} \arrow{d}[swap]{br} & \Mbar_{g,n}\\
\Mbar_{h,r+s}
\end{tikzcd}
\]
where $s$ is the number of simple branch points. There are analogous tautological morphisms on spaces of admissible covers of tropical curves.
Tropicalization is compatible with these two  tautological morphisms to the moduli space of curves in the following sense.

\begin{theorem}~\label{thm: theorem2}
Let $br$ denote the branch map $\Hbar^{}_{g\to h, d}(\vec \mu)\to \Mbar_{h,r+s}$, and $src$ denote the source map $\Hbar^{}_{g\to h, d}(\vec \mu)\to \Mbar_{g,n}$, where $n$ is the number of smooth points in the inverse image of the branch locus. Then the following diagram is commutative:

\[
\begin{tikzcd}
\Hbar^{an}_{g\to h, d}(\vec \mu) \arrow{dr}{trop} \arrow[swap]{dd}{br^{an}}  \arrow{rr}{src^{an}}  &   &   \Mbar^{an}_{g,n}  \arrow{d}{trop}\\
    & \Hbar^{trop}_{g,d}(\vec \mu) \arrow{d}{br^{trop}}  \arrow{r}{src^{trop}}  &  \Mbar^{trop}_{g,n}\\
\Mbar^{an}_{h,r+s} \arrow[swap]{r}{trop} & \Mbar^{trop}_{h,r+s}, & 
\end{tikzcd}
\]

The induced map on skeleta of the branch (resp. source) morphism factors as a composition of the map $trop_\Sigma$ to $\overline{\Sigma}(\Hbar^{an}_{g\to h,d}(\vec\mu))$, followed by the tropical branch (resp. source) map, so $br^{trop} = trop_\Sigma\circ br^{\Sigma}$ (resp. $src^{trop} = trop_\Sigma\circ src^{\Sigma}$).
\end{theorem}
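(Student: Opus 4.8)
The plan is to reduce everything to the functoriality of the tropicalization map established in Theorem~\ref{thm: theorem1}, together with the analogous compatibility statement for the moduli space of curves itself—i.e.\ the Abramovich--Caporaso--Payne result that $\mathrm{trop}\colon \Mbar^{an}_{g,n}\to\Mbar^{trop}_{g,n}$ factors through the canonical projection to the skeleton and identifies $\Mbar^{trop}_{g,n}$ with $\overline{\Sigma}(\Mbar_{g,n})$. First I would observe that all three analytifications in the diagram are analytifications of proper toroidal Deligne--Mumford stacks, and that the tautological morphisms $br$ and $src$ are toroidal morphisms (the source and branch maps send boundary divisors to boundary divisors, since degenerating a cover degenerates the source and target curves compatibly). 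A toroidal morphism of toroidal stacks induces a morphism of skeleta, functorially in composition, and the induced map on skeleta agrees with the map on tropicalizations by the general machinery of Thuillier and ACMUW. So the commutativity of the diagram will follow once I identify each of these induced maps on skeleta with the corresponding combinatorial tautological morphism $br^{trop}$, $src^{trop}$ on tropical moduli spaces.

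The key step, then, is a local calculation. On a single cone $\sigma^{\Hbar}_\Theta$ of $\Sigma(\Hbar^{an}_{g\to h,d}(\vec\mu))$—corresponding to a stratum of $\Hbar_{g\to h,d}(\vec\mu)$ whose generic point parametrizes a fixed combinatorial type $\Theta$ of admissible cover—the coordinates are the lengths (valuations of smoothing parameters) of the bounded edges of the tropical cover, plus the lengths of legs fixed by $\vec\mu$. The branch map forgets the cover and records the target tropical curve marked at the branch points; in the versal deformation, the smoothing parameter of a node of the target curve pulls back, under $br$, to the smoothing parameter of a node of the source curve raised to the power equal to the local degree along the corresponding edge—this is precisely the local statement that an admissible cover étale-locally looks like $xy = t$ upstairs mapping to $uv = t^e$ downstairs. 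Reading this off on the level of valuations gives exactly the integral-linear formula defining $br^{trop}$: edge lengths downstairs are obtained from edge lengths upstairs by multiplying by expansion factors, and contracted edges map to zero. The source map is even more direct, since on the level of combinatorial types $src^{trop}$ just reads off the stabilization of the source graph with its metric, and the corresponding smoothing parameters pull back isomorphically (expansion factor $1$ at each node of the source). I would check that the construction is compatible with gluing cones, so that the cone-wise maps assemble to $br^{\Sigma}$ and $src^{\Sigma}$, and that $br^{trop}=\mathrm{trop}_\Sigma\circ br^{\Sigma}$ by comparing with the factorization in Theorem~\ref{thm: theorem1}.

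The main obstacle I anticipate is bookkeeping around stacky structure and the marking conventions rather than anything deep: one must be careful that the skeleton of a toroidal \emph{stack} is the one obtained after passing to a suitable cover or using the Abramovich--Caporaso--Payne normalization, so that automorphisms of admissible covers (which contribute the automorphism factors appearing in $\omega(\Theta)$ in Theorem~\ref{thm:branchdegree}) are accounted for consistently on both the analytic and tropical sides. A related subtlety is that the number $n$ of marked points on the source—the smooth preimages of the branch locus—depends on the ramification profile, and one must verify that the tropical source map $src^{trop}$ uses the same marking convention so that the diagram literally commutes rather than commuting only up to relabeling. Once these conventions are pinned down, the diagram chase is formal: the outer square commutes because the two composites $\mathrm{trop}\circ src^{an}$ and $src^{trop}\circ\mathrm{trop}$ agree cone-by-cone, and the factorization statement is immediate from the universal property of the projection to the skeleton.
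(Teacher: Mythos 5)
Your treatment of the branch map follows essentially the paper's route: show that $br$ is given in local charts by an equivariant --- and, crucially, \emph{dominant} --- monomial map, using the deformation-theoretic fact that smoothing parameters of the target pull back to monomials; then invoke functoriality of skeleta for such morphisms, the ACP identification $\overline{\Sigma}(\Mbar_{h,r+s})\cong\Mbar^{trop}_{h,r+s}$, and the factorization from Theorem~\ref{thm: theorem1}. That half is sound (one small imprecision: on the normalized space $\Hbar_{g\to h,d}(\vec\mu)$ the target smoothing parameter pulls back to the local coordinate raised to the \emph{least common multiple} of the ramification indices over that node, not to a single local degree; this is harmless for commutativity, though it matters for the degree computation in Theorem~\ref{thm:branchdegree}).

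The source-map half has a genuine gap. You assert that $src$ is a toroidal morphism and that ``the corresponding smoothing parameters pull back isomorphically (expansion factor $1$ at each node of the source).'' Both claims are false, and the paper is explicit about the first: the source map is \emph{not} toroidal, because a toroidal morphism must locally be a \emph{dominant} equivariant map of toric varieties, and $src$ is not locally dominant. If $\widetilde p_1,\dots,\widetilde p_m$ are the nodes of $D$ over a node $p$ of $C$, with ramification indices $w_1,\dots,w_m$, then in $\Mbar_{g,n}$ these nodes have $m$ independent smoothing parameters $\xi_1,\dots,\xi_m$, whereas the deformations arising from $\Hbar_{g\to h,d}(\vec\mu)$ satisfy $\xi_1^{w_1}=\cdots=\xi_m^{w_m}$. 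Hence the image of a local chart is a proper monomial subvariety, and on skeleta the image is the subcone of $\Mbar^{trop}_{g,n}$ cut out by $w_1\ell(\widetilde e_1)=\cdots=w_m\ell(\widetilde e_m)$; the source edge lengths are $\ell(e)/w_i$, not $\ell(e)$. Because $src$ is not toroidal, the general ``toroidal morphisms induce maps on skeleta'' machinery you invoke does not apply to it as stated; the paper must introduce the weaker notion of a \emph{locally analytically toric} morphism (Definition~\ref{def: loc-an-toric}, cf.\ Remark~5.3.2 of ACP) and verify, via exactly the monomial relations above, that $src$ satisfies it. Without that step, your argument for the source square does not go through.
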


Note that the tautological branch morphism is toroidal: it is given analytically locally by a \textit{dominant} equivariant morphism of toric varieties. However, the source map is not toroidal, but locally analytically given by an equivariant morphism of toric varieties, see Definition~\ref{def: loc-an-toric} and~\cite[Remark 5.3.2]{ACP}.

It is proved in~\cite{ACP} that the skeleton $\overline{\Sigma}(\Mbar^{an}_{g,n})$ is identified with $\Mbar^{trop}_{g,n}$. In other words, the analogue of the map $trop_\Sigma$ for $\Mbar_{g,n}$ is an isomorphism. A crucial aspect of the present work is analyzing precisely how the skeleton relates to $\Hbar^{trop}_{g\to h,d}(\vec\mu)$, and indeed the naive extension of the result in loc. cit. does not hold.  The map $trop_\Sigma$ records the combinatorial data of an admissible cover. In particular, the failure of $trop_\Sigma$ to be an isomorphism is due to two phenomena. 
\begin{enumerate}[(A)]
\item Given a weighted dual graph $\Gamma$, there is a unique irreducible stratum of the moduli space $\Mbar_{g,n}$ corresponding to it. In our setting, for one combinatorial type $\Theta = [\Gamma_{src}\to\Gamma_{tgt}]$ for a cover, there are multiple  irreducible components in the stratum  in $\Hbar_{g\to h,d}(\vec\mu)$ having dual graph $\Theta$. In particular, there are multiple zero strata corresponding to the same combinatorial data.
\item The stack of admissible covers arises as the normalization of the stack of generalized Harris--Mumford covers $\HMbar_{g\to h,d}(\vec\mu)$.
There are multiple cones of the skeleton $\Sigma(\Hbar^{an}_{g\to h,d}(\vec\mu))$  (corresponding to the multiple analytic branches at a point of the moduli space $\HMbar_{g\to h,d}(\vec\mu)$) which  all map isomorphically to the same cone of the tropical moduli space. We discuss this in detail in Section~\ref{sec: trop-sigma}.
\end{enumerate}

All the results share two common ingredients. The first is the technology developed in~\cite{ACP,Thu07} that allows to study skeletons of toroidal compactifications of Deligne--Mumford stacks over trivially valued fields. The second is a careful study of the boundary stratification and deformation theory of the stack of admissible covers from~\cite{ACV, HM82, Moch}. 

\subsection{Context and Motivation}
Classical Hurwitz theory studies ramified maps between algebraic curves. Hurwitz numbers count the number of covers of a genus $h$ curve by a genus $g$ curve, with prescribed degree, ramification data, and branch points. As often is the case in enumerative geometry, there is a tight dictionary between the enumerative data of Hurwitz numbers and the intersection theory on the moduli spaces parameterizing Hurwitz covers. 

{\it Hurwitz spaces}, which parameterize covers of smooth curves by smooth curves,  are not proper. For many applications, including enumerative geometry, it is desirable to compactify the Hurwitz space. There are multiple approaches to compactifying this space, each with its pros and cons. In this work we focus on the compactification by {\it admissible covers}.

 The notion of admissible covers was first introduced by Harris and Mumford in~\cite{HM82}. 
The fundamental idea is that source and target curves must degenerate together. Branch points are not allowed to come together; as branch points approach, a new component of the base curve ``bubbles off'', and simultaneously the source curve splits into a nodal curve.

The admissible covers that Harris and Mumford consider are covers of genus $0$ curves, having only simple ramification --- namely, such that the all ramification profiles are given by $(2,1,1,\ldots,1)$. Their work is generalized by Mochizuki in his thesis~\cite{Moch}. Mochizuki uses logarithmic geometry to understand the geometry of the admissible cover space for covers of arbitrary genus and arbitrary ramification profiles. Abramovich, Corti, and Vistoli, in~\cite{ACV}, reinterpret admissible covers using the theory of twisted stable maps to classifying stacks $BS_d$. A map from a curve $C$ to the stack $BS_d$ produces, by definition, a principal $S_d$ bundle on $C$. Given such a principal $S_d$-bundle $P\to C$, one can associate a finite \'etale cover of degree $d$, $D\to C$, where $D = P/S_{d-1}$. In fact, this gives an equivalence of categories between principal $S_d$-bundles and finite \'etale covers of $C$ of degree $d$. By allowing orbifold structure at points  and nodes of $C$ one introduces ramification over such points, and a map from the orbicurve to $BS_d$  corresponds to an admissible cover $D\to C$ where $D = P/S_{d-1}$. The orbifold structure at the nodes of $C$ is required to be balanced,  which is precisely the condition allowing the nodes to be smoothly deformed.
The Abramovich--Corti--Vistoli stack of twisted stable maps  is the normalization of  the Harris--Mumford admissible covers. 

The Hurwitz enumeration problem provides  deep connections between the representation theory of the symmetric group, enumerative geometry, intersection theory on moduli spaces, and combinatorics. For example, see~\cite{GJ1,GJ2,OP06}. 
Tropical Hurwitz theory was first introduced in~\cite{CJM1}, where the case of double Hurwitz numbers for genus $0$ targets is investigated. Further steps in the theory of tropical Hurwitz covers have since been made by Bertrand, Brugall\'e, and Mikhalkin~\cite{BBM}, and by Buchholz and the second author~\cite{BM13}.

At the base of any successful application of tropical methods to enumerative geometry are so-called \textit{correspondence theorems}, which establish equality between classical and tropical enumerative invariants. The first instance of such a result was demonstrated by Mikhalkin~\cite{Mi03}, in his study of the Gromov--Witten invariants of toric surfaces. His correspondence result follows from a direct bijection between the set of algebraic curves satisfying fixed incidence conditions and the (weighted) set of corresponding tropical curves. Subsequent breakthroughs in tropical enumerative geometry have shared this feature of establishing direct set-theoretic bijections between the tropical and classical objects. Enumerative invariants often represent degree zero Chow cycles on a natural moduli space, and traditional correspondence theorems do not link the classical and tropical problems at the level of moduli spaces. Tropical moduli spaces and their intersection theory have been studied in order to express tropical enumerative invariants, analogously to the algebraic setting, as intersection products on a suitable moduli space~\cite{GKM07}. In light of this, it is natural to seek an understanding of the equality of classical and tropical enumerative invariants at the level of moduli spaces.
\[
\begin{tikzcd}
\textnormal{\small {Chow cycles on classical moduli}} \ar[leftrightarrow,dotted]{r}{\textnormal ?} & \textnormal{\small {Cycles on tropical moduli }} \\
\textnormal{\small Classical enumerative invariants} \arrow[leftrightarrow]{r} \arrow{u}& \textnormal{\small Tropical enumerative invariants} \arrow{u}
\end{tikzcd}
\]

In this paper, we present the first instance of such a result, by equating classical Hurwitz numbers with tropical ones. We do so by appealing to machinery of Berkovich analytic spaces. The zero cycles representing Hurwitz numbers are obtained as the degree of a naturally defined branch morphism on an appropriate compactification of the Hurwitz space, namely the stack of admissible covers. This recovers previously known results on double Hurwitz numbers~\cite{CJM1}. The present framework also applies equally well to more general settings, such as higher genus targets, and arbitrary ramification profiles. 
In particular, we can also reprove the general correspondence theorem for tropical Hurwitz numbers from \cite{BBM} at the level of moduli spaces.

Our method for comparing classical and tropical moduli spaces of admissible covers follows closely the work of Abramovich, Caporaso and Payne \cite{ACP} on the moduli space of curves.
The moduli space of genus $g$ tropical curves, roughly speaking, parametrizes vertex weighted metric graphs of a given genus~\cite{BMV11, Cap11, CMV12}. This moduli space bears some striking similarities to the moduli space of genus $g$ Deligne--Mumford stable curves. The spaces share the same dimension, and have similar stratifications. These analogies are put on firm ground by realizing the space $\Mbar_{g,n}^{trop}$ as a skeleton of the Berkovich analytification of the stack $\Mbar_{g,n}$. The relationship between $\Mbar_{g,n}$ and $\cM_{g,n}^{trop}$ is not unlike the relationship between a toric variety and its fan. This intuition is made precise by using a natural toroidal structure of $\Mbar_{g,n}$ to obtain a skeleton of $\Mbar^{an}_{g,n}$. Abramovich, Caporaso, and Payne developed techniques, building on work of Thuillier~\cite{Thu07}, to construct the skeleton of the analytification of any toroidal Deligne--Mumford stack, and relate it to the cone complex naturally  associated to the toroidal embedding~\cite{KKMSD}. As stacks of admissible covers are smooth with a toroidal structure induced by the  normal crossing boundary,  these techniques apply directly to our setting.

The study of Hurwitz numbers of $\PP^1$ has also interacted fruitfully with the theory of stable maps and Gromov--Witten invariants of $\PP^1$. In fact, the moduli space of Hurwitz covers sits inside the moduli space of degree $d$ relative stable maps to $\PP^1$, relative to the (special) branch divisor. Applying techniques of Gromov--Witten theory such as Atyiah-Bott localization connected Hurwitz numbers to the tautological intersection theory on the moduli space of curves: the ELSV formula~\cite{ELSV, GV05}, which gives a precise equality between simple Hurwitz numbers and one part Hodge integrals, has been a key ingredient in Okounkov and Pandharipande's proof ~\cite{OP} of Witten's Conjecture/Kontsevich's Theorem.

In~\cite{CJM1}, the first and second authors, together with Paul Johnson, constructed and studied a tropical analogue of the Gromov--Witten moduli stack of stable maps to $\PP^1$, relative to a two-point special branch divisor. The corresponding classical moduli space is a singular, non-equidimensional stack, and does not afford a direct application of the techniques of Abramovich, Caporaso, and Payne.

The space of relative stable maps can be seen as a ``hybrid'' theory, between admissible covers and (absolute) stable maps. More precisely, admissible covers is a theory of relative stable maps when the entire branch divisor on the target curves is made relative (and it is allowed to ``move"). The admissible cover compactification of the Hurwitz scheme admits a rational map to the relative stable map space which is dominant on the main (expected dimensional) component. As a result, we see the study of admissible covers as a natural first step towards a functorial and geometric understanding of tropical relative Gromov--Witten theory. 


\subsection*{Acknowledgements} \noindent
The first author acknowledges with gratitude the support by NSF grant DMS-1101549, NSF RTG grant 1159964, and the second author the support by DFG grant MA 4797/6-1. The third author acknowledges many fruitful conversations with Tyler Foster, Dave Jensen, Douglas Ortego, Yoav Len, and Martin Ulirsch. The authors thank Sam Payne for introducing them, as well as for several insightful comments. We also thank an anonymous referee for helpful comments on an earlier version.

\section{Combinatorial constructions}\label{sec: comb-const}

A unifying feature of the numerous instances of tropicalization is that they associate combinatorial and polyhedral structures to algebraic varieties. Examples of these are finite and metric graphs, cones, fans, and polyhedral complexes. Often, these structures are finite approximations to an appropriate Berkovich analytic space. In this section we briefly recall the concepts relevant to this work.

\subsection{Dual graphs}

If $C$ is a nodal curve, one may associate a \textit{vertex weighted dual graph} or simply \textit{dual graph} $\Gamma_C$ as follows:
\begin{enumerate}[(i)]
\item the vertices of $\Gamma_C$ are  the irreducible components of $C$;
\item the edges of $\Gamma_C$ are the nodes of $C$; an edge $e$ is incident to a vertex $v$ if the node associated to $e$ is contained in the component corresponding to $v$;
\item a vertex $v$ is given a weight $g(v)$, equal to the geometric genus of the component corresponding to $v$. 
\end{enumerate}

If $C$ is a pointed curve, with marked points $p_1,\ldots, p_n$, then we add an \textit{infinite edge} for each $p_i$, incident to the vertex whose component contains $p_i$. We say that a dual graph is \textit{totally degenerate} if all vertices carry genus $0$. 

We call a dual graph {\it stable} if a nodal curve having that dual graph is stable. In other words, all genus $0$ vertices must be at least trivalent (counting infinite edges), and all genus $1$ vertices must be incident to an edge. Notice that loops and multiple edges are allowed, and a loop contributes two to the valence of its adjacent vertex.

\subsection{Tropical curves and morphisms}
For our purposes, an \textit{$n$-pointed tropical curve} is essentially a metrization of the dual graph of an $n$-pointed nodal curve, where edges corresponding to marked points are metrized as $\RR_{> 0}\cup\{\infty\}$, and the metric must be singular at the ends of these infinite edges.  We sometimes refer to edges corresponding to nodes as \textit{interior edges} to distinguish them from infinite edges. 

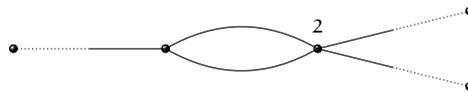
\begin{figure}[h!]
\begin{tikzpicture}

\draw [ball color=black] (-2,0) circle (0.5mm);
\draw [ball color=black] (0,0) circle (0.5mm);
\draw [ball color=black] (2,0) circle (0.5mm);
\draw [ball color=black] (4,0.5) circle (0.5mm);
\draw [ball color=black] (4,-0.5) circle (0.5mm);

\path (0,0) edge [bend left] (2,0);
\path (0,0) edge [bend right] (2,0);

\draw (0,0)--(-1,0);
\draw [densely dotted] (-2,0)--(-1,0);
\draw (2,0)--(3,0.25);
\draw [densely dotted] (4,0.5)--(3,0.25);
\draw (2,0)--(3,-0.25);
\draw [densely dotted] (4,-0.5)--(3,-0.25);

\draw node at (2,0.3) {\tiny$2$};

\end{tikzpicture}
\caption{A tropical $3$-pointed curve of genus $3$, with $2$ interior edges. Unmarked vertex weights are $0$. Lengths of interior edges are $1$.}
\end{figure}

In other words, a finite metric graph $\Gamma$ is a compact 1-dimensional topological space, locally homeomorphic to $S_r$, the ``star with $r$ branches''. Furthermore, there are only finitely many points of $\Gamma$ at which $r\neq 2$. This unique integer $r$ at each point is called the valence of the point. The set of tangent directions at $p\in \Gamma$ is defined as
\[
T_p\Gamma := \varinjlim_{U_0} \pi_0(U_p\setminus p),
\]
where the limit is taken over neighborhoods of $p$ homeomorphic to a star with $r$ branches. A tangent direction may be thought of as a germ of an edge. The size of this set equals the valence of $p$. 

\noindent
A tropical curve $\Gamma$ is a connected metric graph with a weighting $g:\Gamma\to \mathbb Z_{\geq0}$, which is zero outside finitely many points of $\Gamma$. This weight $g(p)$ should be thought of as the genus of a virtual algebraic curve lying above $p$. (This intuition can be made concrete in terms of metrized complexes of curves and Berkovich skeleta, in the sense of~\cite{ABBR}.) The genus of a graph $\Gamma$ is given by 
\[
g(\Gamma) = h^1(\Gamma)+\sum_{p\in \Gamma} g(p). 
\]
Throughout the text, we consider \textit{$n$-pointed tropical curves}, i.e.\ tropical curves with $n$ marked infinite edges.
An ($n$-pointed) tropical curve is \textit{stable} if every genus $0$ vertex is at least trivalent.

One can associate to any (pointed) tropical curve $\Gamma$ a finite graph, which we refer to as a \textit{combinatorial type}, by taking its minimal finite graph model. That is, we take the vertices to be those points of valence different from $2$, or whose genus is nonzero. The edges are formed in the obvious way. Conversely, given a combinatorial type, a metrization is an assignment of lengths to the edges. Infinite edges are metrized as $[0,\infty]$.

\subsubsection{Harmonic morphisms of tropical curves} Let $\varphi:\Gamma'\to \Gamma$ be a morphism of metric graphs, such that the restriction of $\varphi$ to an edge is affine linear. For an edge $e'$ of $\Gamma'$,  the \textit{expansion factor along $e'$} of $\varphi$ is defined to be the ratio of lengths $l(\varphi(e'))/l(e')$ and it is required to be an integer.  We denote this number $d_{e'}(\varphi)$ and also  call it the slope of $\varphi$ along $e'$. We also fix slopes $d_{e'}(\varphi)$ for along each infinite edge $e'$.

A morphism of tropical curves is a harmonic morphism between the underlying metric graphs with integer affine slopes, in the sense of~\cite[Section~1]{ABBR}. We recall that a 
morphism $\varphi$ as above is \textit{harmonic at $p'$} if for each  tangent direction $e\in T_{\varphi(p')}\Gamma$, the number
\begin{equation}\label{eq:deg}
d_{p'}(\varphi):=\sum_{{\substack{{e'\in T_{p'}\Gamma'} \\ {e'\mapsto e}}}} d_{e'}(\varphi),
\end{equation}
is independent of $e$. In other words, the sum of outgoing expansion factors at $p'$ along tangent directions mapping to a chosen tangent direction $e$ is independent of $e$. The morphism $\varphi$ is harmonic if it is surjective and harmonic at all points of $\Gamma'$ (i.e.\ non-constant). 

The integer $d_{p'}(\varphi)$ is called the  {\it local degree} of $\varphi$ at
$p'$.  Note that a harmonic morphism has itself a degree, defined to be, for any vertex $p$ in the base graph, the sum of the local degrees of $\varphi$ for all vertices $p'$ mapping to $p$.

\begin{example}
Consider the non-harmonic morphism depicted in Figure~\ref{fig: non-harmonic}. Observe that on one side of the central vertex in the target, the sum of degrees mapping to it is $5$, while on the other side, it is $4$. In particular, there is no well defined notion of degree for such a map.

\begin{figure}[h!]
\begin{tikzpicture}
\draw [ball color = black] (0,0) circle (0.4mm);
\draw [ball color = black] (135:1) circle (0.4mm);
\draw [ball color = black] (45:1) circle (0.4mm);
\draw [ball color = black] (270:1) circle (0.4mm);
\draw (45:1)--(0,0);
\draw (135:1)--(0,0);
\draw (270:1)--(0,0);
\draw [->] (1.5,0)--(2,0);

\draw node at (160:0.5) {\tiny $3$};
\draw node at (20:0.5) {\tiny $2$};
\draw node at (250:0.5) {\tiny $4$};

\draw [ball color = black] (3,0) circle (0.4mm);
\draw [ball color = black] (3,0.707) circle (0.4mm);
\draw [ball color = black] (3,-1) circle (0.4mm);
\draw (3,0)--(3,0.707);
\draw (3,0)--(3,-1);

 \begin{scope}[shift={(8,0)}]
\draw [ball color = black] (0,0) circle (0.4mm);
\draw [ball color = black] (135:1) circle (0.4mm);
\draw [ball color = black] (45:1) circle (0.4mm);
\draw [ball color = black] (270:1) circle (0.4mm);
\draw (45:1)--(0,0);
\draw (135:1)--(0,0);
\draw (270:1)--(0,0);
\draw [->] (1.5,0)--(2,0);

\draw node at (160:0.5) {\tiny $3$};
\draw node at (20:0.5) {\tiny $2$};
\draw node at (250:0.5) {\tiny $5$};

\draw [ball color = black] (3,0) circle (0.4mm);
\draw [ball color = black] (3,0.707) circle (0.4mm);
\draw [ball color = black] (3,-1) circle (0.4mm);
\draw (3,0)--(3,0.707);
\draw (3,0)--(3,-1);
  \end{scope}

\end{tikzpicture}
\caption{Depicted on the left is a non-harmonic morphism of graphs, and on the right a harmonic morphism. The numbers on the source graph indicate the expansion factors along the corresponding edge. The morphism on the left is not harmonic in a neighborhood of the central vertex.}
\label{fig: non-harmonic}
\end{figure}
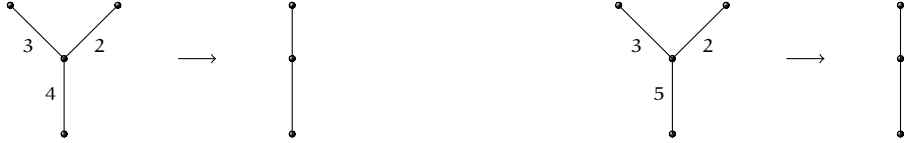

\end{example}

\subsection{Cones and cone complexes}
A \textit{polyhedral cone with integral structure} is a pair $(\sigma, M)$ consisting of a topological space $\sigma$, together with a finitely generated abelian group $M$ of continuous real valued functions on $\sigma$, such that the natural map $\sigma\to Hom(M,\RR)$ is a homeomorphism onto a (strictly convex) polyhedral cone. We only consider \textit{rational} cones, i.e. cones whose image is rational with respect to the dual lattice $Hom(M,\ZZ)$. A good example to have in mind is the cone defining an affine toric variety, where $M$ is the character lattice of the dense torus. 

Let $\sigma$ be a cone. The \textit{dual monoid} $S_\sigma$ of $\sigma$ consists of those functions $f\in M$ that are nonnegative on $\sigma$. We can recover the original cone from its dual monoid as
\[
\sigma = Hom(S_\sigma, \RR_{\geq 0}),
\]
where $\RR_{\geq 0}$ is taken with its usual additive monoid structure. The associated \textit{extended cone} is 
\[
\overline{\sigma} = Hom(S_\sigma, \RR_{\geq 0}\cup\{\infty\}).
\]

Every cone (resp. extended cone) inherits a topology by realizing it as a subspace of the space $\RR_{\geq 0}^{S_\sigma}$ (resp. the space $(\RR_{\geq 0}\sqcup \{\infty\})^{S_\sigma}$).

A \textit{(rational polyhedral) cone complex} is a topological space obtained from a finite disjoint union of polyhedral cones with integral structures, by gluing cones along isomorphic faces.
Fans are of course examples of cone complexes. We remark however that, in a cone complex, the intersection of  cones is allowed to be a union of faces, rather than a single face of each. Additionally, a cone complex makes no reference to an abstract vector space in which it is embedded. See~\cite{KKMSD} for further details. \textit{Extended cone complexes} are obtained analogously from extended cones.

A useful tool for us is the process of barycentric subdivision. 
The barycenter of a cone $\sigma$ is  the ray in its interior spanned by the sum of the primitive generators of the one-dimensional faces of $\sigma$. The \textit{barycentric subdivision} of a cone complex $\Sigma$ is the iterated stellar subdivision of cones in $\Sigma$, in decreasing order of dimension. A more elegant, if less concrete, definition of the barycentric subdivision is that it is obtained as the poset of chains in the face poset of $\Sigma$, ordered by inclusion. The following proposition is often useful.

\begin{proposition}[\cite{AMR}]
Let $\Sigma$ be any cone complex. Then the barycentric subdivision of $\Sigma$ is isomorphic to a simplicial fan. 
\end{proposition}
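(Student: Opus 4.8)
The plan is to reduce the statement to a purely combinatorial assertion about finite posets and then invoke the standard fact that the order complex of a finite poset is simplicial. First I would recall the ``elegant'' description of the barycentric subdivision given just above the proposition: $\mathrm{Bar}(\Sigma)$ is the cone complex whose faces are indexed by chains $\sigma_0 \subsetneq \sigma_1 \subsetneq \cdots \subsetneq \sigma_k$ in the face poset of $\Sigma$, with incidence given by passing to subchains. The cone attached to such a chain is spanned by the barycenters $b(\sigma_0),\ldots,b(\sigma_k)$, where $b(\sigma_i)$ is the ray generated by the sum of the primitive generators of the rays of $\sigma_i$. The two things to check are: (1) each such cone is genuinely \emph{simplicial}, i.e. the barycenters $b(\sigma_0),\ldots,b(\sigma_k)$ are linearly independent over $\RR$ when viewed inside the ambient space $\Hom(M_{\sigma_k},\RR)$ of the maximal cone $\sigma_k$ of the chain; and (2) the resulting collection of simplicial cones fits together into a \emph{fan}, i.e. any two of them meet along a common face, not merely along a union of faces.

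For (1), I would work inside a fixed cone $\tau = \sigma_k$ with integral structure $(\tau, M)$. Order the rays of $\tau$ and let $v_1,\ldots,v_m$ be their primitive generators. For a face $\sigma \le \tau$ let $I(\sigma) \subseteq \{1,\ldots,m\}$ be the set of indices of rays contained in $\sigma$; then $b(\sigma)$ is spanned by $\sum_{j \in I(\sigma)} v_j$. Because $\sigma_0 \subsetneq \cdots \subsetneq \sigma_k$ is a strict chain of faces, we get a strictly increasing chain of subsets $I(\sigma_0) \subsetneq I(\sigma_1) \subsetneq \cdots \subsetneq I(\sigma_k)$. Linear independence of $\sum_{j\in I(\sigma_0)} v_j, \ldots, \sum_{j \in I(\sigma_k)} v_j$ then follows from linear independence of the $v_j$ on each ``new'' block $I(\sigma_{i})\setminus I(\sigma_{i-1})$: one shows by a downward induction that if $\sum_i c_i \big(\sum_{j \in I(\sigma_i)} v_j\big) = 0$ then $c_k = 0$ (restrict to the coordinates indexed by $I(\sigma_k)\setminus I(\sigma_{k-1})$, which is nonempty and on which the $v_j$ are part of a basis of a face of a strictly convex cone, hence independent), then $c_{k-1}=0$, etc. Strict convexity of $\tau$ and of its faces is exactly what guarantees the primitive ray generators of any face are linearly independent, which is the input this argument needs.

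For (2), I would argue that the cones of $\mathrm{Bar}(\Sigma)$ form a fan. Two chains of faces, both contained in a common maximal cone $\tau$ of $\Sigma$, span cones in $\Hom(M_\tau,\RR)$ whose intersection is the cone spanned by the barycenters common to both chains --- this is the standard statement that the stellar/barycentric subdivision of a cone is a simplicial fan refining it, and it can be extracted from the iterated stellar subdivision picture: each stellar subdivision of a cone along its barycenter is manifestly a fan refinement, and a composition of fan refinements is a fan refinement. When the two chains lie in different maximal cones of $\Sigma$, their cones meet inside the common face $\tau \cap \tau'$ of $\Sigma$, and one applies the same argument there; here one uses that the barycenter construction is compatible with passing to faces, since the set of rays of a face $\sigma$ of $\tau\cap\tau'$ is the same whether $\sigma$ is regarded inside $\tau$ or inside $\tau'$. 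Assembling (1) and (2) shows $\mathrm{Bar}(\Sigma)$ is a simplicial fan, and the isomorphism with $\Sigma$ as topological spaces (with the refined integral structure) is built into the construction of a subdivision.

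The main obstacle is bookkeeping the integral/lattice structures carefully rather than any deep geometry: one must make sure that the ``barycenter'' rays are taken with respect to a \emph{consistent} choice of primitive generators as one moves between a cone and its faces and between overlapping maximal cones, so that the local simplicial fans glue into a global cone complex with integral structure. Once the indexing of faces by chains and the block-wise linear-independence argument are set up, the rest is the classical fact about order complexes of posets; indeed, rather than reproving everything one may simply cite \cite{AMR} for the precise statement and restrict the writeup to recalling why the chain description yields simpliciality. I expect to present the argument in the compressed form: recall the chain description, observe the strictly increasing chains of ray-index sets, deduce linear independence from strict convexity, and note that iterated stellar subdivision produces a fan.
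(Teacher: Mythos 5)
The paper offers no proof of this proposition; it is quoted from \cite{AMR}, so your write-up can only be judged on its own merits. The overall architecture (index cones of the subdivision by chains in the face poset, check simpliciality of each chain-cone, check the cones assemble into a fan) is the right one, but two steps as written do not hold up.

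First, your linear-independence argument in (1) rests on the claim that ``strict convexity of $\tau$ and of its faces is exactly what guarantees the primitive ray generators of any face are linearly independent.'' This is false: it is equivalent to the face being \emph{simplicial}, which is precisely what a general cone in $\Sigma$ need not be. The cone over a square in $\RR^3$ is strictly convex with four primitive ray generators satisfying $v_1+v_3=v_2+v_4$, so there is no ``restriction to the coordinates indexed by $I(\sigma_k)\setminus I(\sigma_{k-1})$'' --- the $v_j$ do not give coordinates at all. The conclusion (that the barycenters along a strict chain are independent) is nevertheless true, but the correct mechanism is different: $b(\sigma_i)$ lies in the \emph{relative interior} of $\sigma_i$, hence outside every supporting hyperplane of $\sigma_i$, hence outside $\Span(\sigma_{i-1})$; since $\Span\bigl(b(\sigma_0),\dots,b(\sigma_{i-1})\bigr)\subseteq\Span(\sigma_{i-1})$, independence follows by induction on $i$. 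Your argument should be replaced by this one.

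Second, part (2) under-addresses what the proposition is actually for. A cone complex has no ambient vector space, and two of its cones may intersect in a \emph{union} of faces (the paper points this out explicitly), so the phrase ``their cones meet inside the common face $\tau\cap\tau'$'' assumes away exactly the pathology the barycentric subdivision is meant to cure. What is needed is (a) the combinatorial fact that two chain-cones of the subdivision intersect in the cone on their common subchain, which one checks by observing that a point of the chain-cone with support $S$ lies in the relative interior of $\sigma_{\max S}$, so membership in any face of $\Sigma$ is detected by an initial segment of the chain; and (b) an actual embedding into a single vector space, e.g.\ sending the barycenter of each face $\sigma$ of $\Sigma$ to the basis vector $e_\sigma$ of $\RR^{F(\Sigma)}$, under which chain-cones become coordinate simplicial cones meeting along common coordinate subspaces. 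With those two repairs the proof is complete; as written, both the simpliciality step and the fan step contain genuine gaps.
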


A \textit{face morphism} is a morphism between cone complexes such that every cone maps isomorphically onto a cone in the image. 

\begin{definition}
A \textit{generalized cone complex} is an arbitrary finite colimit of cones $\sigma_i$ and face morphisms $\psi_i$,
\[
\Sigma = \varinjlim (\sigma_i,\psi_i).
\]
\end{definition}

These more general objects are built to allow these additional operations: gluing  two isomorphic faces of a single cone, and taking quotients of cones by a group of automorphisms. 

Here, generalized cone complexes appear as Berkovich skeleta of moduli spaces of admissible covers, resp.\ as the corresponding tropical moduli spaces.

\section{Classical and tropical admissible covers}\label{sec: bkgrnd}

\subsection{Classical admissible covers and their moduli}

Let $(C,p_1,\ldots,p_n)$ be a genus $g$ $n$-pointed stable nodal curve.
\begin{definition}
An \textit{admissible cover} $\pi: D\to C$ of degree $d$ is a finite morphism of pointed curves such that:
\begin{enumerate}[(i)]
\item The map $\pi$ restricted to the complement of the inverse image of the marked points and nodes is \'etale of constant degree $d$. 
\item All inverse images of marked points of $C$ are marked in $D$.
\item The set of nodes of $D$ is precisely the preimage under $\pi$ of the set of nodes of $C$.

\item Over a node, \'etale locally, $D$, $C$ and $\pi$ are described by
\begin{eqnarray*}
D&:& y_1y_2 = a\\
C&:& x_1x_2 = a^\ell\\
\pi&:& x_1 = y_1^\ell, \ \ x_2 = y_2^\ell.
\end{eqnarray*}
for some positive integer $\ell\leq d$.
\end{enumerate}
\end{definition}

\begin{remark}
Intuitively, condition \textit{(i)} means the branch locus of $\pi$ is contained in the union of marked points and nodes of $C$. Condition   \textit{(ii)} spells that we want to distinguish the inverse images of each branch point: given a branch point, the markings of its inverse images tell apart points with the same ramification index.
Condition \textit{(iii)} and \textit{(iv)}  amount to saying that nodal covers arise as limit of covers of smooth curves where source and target degenerate simultaneously.  In particular  we have a natural kissing condition: if $\pi$ is lifted to $\tilde{\pi}$ on the normalizations of D and C, then for each node of D, the
ramification indices of $\tilde{\pi}$ at the two points lying above the node coincide.
\end{remark}

There are several slight variations of moduli spaces of admissible covers. For instance, the base curve can be fixed, or allowed to vary in moduli. Fixing both the base curve and the branch locus, the moduli space becomes zero dimensional; the degree of the fundamental class is called a Hurwitz number. 


Fix a vector of partitions $\vec\mu = (\mu^1,\ldots,\mu^r)$ of an integer $d>0$.

\begin{definition}
Fix $(r+s)$ points $p_1,\ldots, p_r,q_1,\ldots,q_s$ on a smooth genus $h$ curve $C$. The Hurwitz number, denoted $h_{g\to h,d}(\vec\mu)$, is the weighted number of degree $d$ covers $[\pi:D\to C]$ such that $\pi$ is unramified over the complement of $\{p_i,q_j\}_{i,j}$, with ramification profile $\mu_i$ over $p_i$ and simple ramification over $q_j$. Each cover is weighted by $1/|Aut(\pi)|$. 
\end{definition}

Notice that we mark all the preimages of branch points, this convention simplifies combinatorial aspects in the tropical version.

\begin{example}
Here are some examples of Hurwitz numbers.
\begin{eqnarray*}
h_{0\to 0,d}((d),(d)) &=& \frac{1}{d}\\
h_{1\to 0,2} &=& \frac{1}{2}\ \  \textnormal{(all ramification is simple)}\\
h_{1\to 0,4}((2,2),(4))&=&14\cdot 2\cdot 2^3.
\end{eqnarray*}
The first  two numbers  are easily computed by counting monodromy representations (\cite{renzosbook}). For an example computation of the third number via monodromy graphs, see~\cite[Example 4.5]{CJM1}. We have a factor of $2$ because we mark the two points giving the profile $(2,2)$, contrary to~\cite{CJM1}, and a factor of $2^3$ because the unramified inverse images above each branch point are also marked. 
\end{example}



The space of admissible covers is in general a non-normal stack, however the normalization is always smooth. A modular interpretation of the normalization as the stack of twisted stable maps to the classifying stack $BS_d$ was given by Abramovich, Corti and Vistoli in~\cite{ACV}.  Considering partitions of $d$ as labelling connected components of the inertia stack of $BS_d$,  the space $ACV_{\substack{g\to h}, {d}}(\vec \mu)$ is the component of the stack
$\overline{\mathcal{M}}_{h,r+s}(BS_d,0)$ identified by the inertial conditions $\prod_{i=1}^r ev_i^\ast(\mu_i)\prod_{i=1}^s ev_{r+i}^\ast((2,1^{d-2}))$. The number $s$ of $\mathbb{Z}/2$ twisted points pulling back the class of a transposition is related to $g$ via the Riemann-Hurwitz formula.
We  provided a precise statement for the benefit of  readers who are already familiar with this language: however we will not make use of this language in any sophisticated way.

\begin{definition}
We denote by $\overline{\mathcal H}_{\substack{g\to h}, {d}}(\vec \mu)$ the cover of $ACV_{\substack{g\to h}, {d}}(\vec \mu)$ obtained by marking the inverse images of all marked points in the corresponding admissible covers. The coarse objects parameterized  are admissible covers of degree $d$ of a genus $h$ curve with $(r+s)$ marked branch points $(p_1,\ldots,p_r,q_1,\ldots,q_s)$, by curves of genus $g$, having ramification profiles $\mu^i$ over $p_i$ and simple ramification over $q_i$ (and no further ramification). Denote by ${\mathcal H}_{g\to h,d}(\vec\mu)$ the open substack parametrizing covers whose source and target are smooth curves. 
\end{definition}

\begin{remark}
We  refer to the normalized stack as the stack of admissible covers.  When we need to specifically point our attention to the Harris--Mumford spaces of admissible covers,  we  explicitly say so and denote this stack by $\HMbar_{g\to h,d}(\vec\mu)$. The open parts of these moduli spaces coincide, that is, $\cH\cM_{g\to h,d}(\vec\mu) \cong \cH_{g\to h,d}(\vec\mu)$. In other words, the non-normality manifests in how boundary strata intersect. We will witness this non-normality (as well as why the normalization is smooth) when we study the local rings of these moduli spaces in Section~\ref{sec: def2}. 

\end{remark}

\noindent
\begin{convention}
The number $s$ always denotes the number of simple branch points, the number $h$ the genus of the base curve. To avoid burdensome notation, we suppress $h$ and $s$  and use  $\overline{\mathcal H}_{{g}, {d}}(\vec \mu)$ to denote our moduli space, with the understanding that $h$ may be arbitrarily chosen but is fixed, and $s$ is determined by the Riemann--Hurwitz formula.  
\end{convention}

\subsubsection{Toroidal embeddings} A toroidal scheme is a pair $U\hookrightarrow X$ which ``locally analytically'' looks like the inclusion of the dense torus into a toric variety. That is, at every point $p\in X$, there is an \'etale (or formal) neighborhood $\varphi:V\to X$, which admits an \'etale map $\psi: V\to V_\sigma$ to an affine toric variety, such that 
\[
\psi^{-1}T = \varphi^{-1}U,
\]
where $T$ is the dense open torus.  

Let $\mathscr X$ be a Deligne--Mumford stack over a field, with coarse space $X$. Let $\mathscr U\subset \mathscr X$ be an open substack. For any morphism from a scheme $h: V\to \mathscr X$, denote $\mathscr U_V\subset V$ the pre-image of $\mathscr U$ in $V$.

\begin{definition}
The inclusion $\mathscr U\hookrightarrow \mathscr X$ is a \textit{toroidal embedding} of Deligne--Mumford stacks if, for every morphism from a scheme $V\to \mathscr X$, the inclusion $\mathscr U_V\hookrightarrow V$ is a toroidal embedding of schemes. 
\end{definition}

Toric varieties are obvious examples of toroidal embeddings. Another  relevant example is the inclusion of the complement of a normal crossings divisor, $(X-D)\hookrightarrow X$. In fact, in this case, all local toric models can be taken to be affine spaces. The moduli space of stable pointed curves, $\Mbar_{g,n}$ is hence an  example of a toroidal Deligne--Mumford stack, since the boundary $\Mbar_{g,n}\setminus \cM_{g,n}$ is a divisor with normal crossings. If in addition $g = 0$, the boundary divisor has strict normal crossings (i.e. the irreducible components have no self intersections). Similarly, the stack $\overline{\mathcal H}_{g,d}(\vec \mu)$ is a smooth Deligne--Mumford stack, and the boundary $\overline{\mathcal H}_{g,d}(\vec\mu)\setminus{\mathcal H}_{g,d}(\vec\mu)$ is a divisor with normal crossings, allowing us to apply the techniques developed by Abramovich, Caporaso, and Payne in this setting. For admissible covers as well, if $h = 0$, the boundary divisor has strict normal crossings. 

\subsection{Tropical admissible covers and their moduli}
\subsubsection{Dual graphs of covers}\label{sec: dual-graph-covers} Just as one can associate a dual graph to any nodal pointed curve, given an admissible cover, we can associate to it the dual graphs of source and target, and a map between them. This map is a well defined morphism of graphs by the axioms placed on admissible covers. 
Irreducible components map to irreducible components, marked points map to marked points, and nodes map to nodes, inducing maps on vertices, infinite edges, and edges, respectively. Note that no edges are contracted in the map of graphs, since nodes map to nodes in an admissible cover.

\begin{enumerate}
\item[] \textbf{Source and target curves.} Take the dual graph of the source and target curves in the above sense. Call these graphs $\Gamma_{src}$ and $\Gamma_{tgt}$ respectively. Recall here that all branch and ramification points are marked. 
\item[] \textbf{The map.} For an admissible cover, a component of the source maps onto precisely one component of the target, yielding a map of vertices. Since nodes map to nodes, edges map to edges.
\item[] \textbf{Ramification.} We mark edges of $\Gamma_{src}$ with integers recording the ramification at the corresponding  
node or marked point of the source curve. That is, if an edge $\widetilde e$ of $\Gamma_{src}$ maps to an edge $e$ of $\Gamma_{tgt}$, this corresponds to a special point $\widetilde p$ of the source curve mapping to a special point $p$ of the target. We decorate $\widetilde e$ with the ramification index of the map at $\widetilde p$.
\end{enumerate}

%

\subsubsection{Tropical admissible covers} \label{subsec-tropadm}
We now recall the notion of a tropical admissible cover, slightly adapting Caporaso's definition in~\cite[Section 2]{cap:gonality}. We wish to study covers of genus $g$ tropical curves, with prescribed ramification data over $r$ points and simple ramification over the remaining $s$ points. 
We say that a  map of tropical curves satisfies the \textit{local Riemann--Hurwitz condition} if, when $v'\mapsto v$ with local degree $d$, then
\[
2-2g(v') = d(2-2g(v)) - \sum (m_{e'}-1),
\]
where $e'$ ranges over edges incident to $v'$, and $m_{e'}$ is the expansion factor of the morphism along $e'$. 
\begin{definition}
A \textit{Hurwitz cover} of a tropical curve is a harmonic map of tropical curves that satisfies the local Riemann--Hurwitz equation at every point.
\end{definition}
\noindent

\subsubsection{Constructing the tropical moduli space: fixed combinatorial type} 

Throughout this subsection, we fix a \textit{degree} $d>0$ and vector of partitions of $d$ denoted $\vec \mu = (\mu^1,\ldots,\mu^r)$. Let $\ell(\mu^i)$ denote the number of parts of the $i^{\mathrm{th}}$ partition. Furthermore fix two integers $g$ and $h$ which are to be the genera of the source and target curve respectively. The Riemann--Hurwitz formula determines a number of \textit{simple branch points} for a curve with ramification profile $\vec\mu$, and we let $s$ be this number. Finally, fix $n = \sum_i \ell(\mu^i)+s(d-1)$.

We will now construct a tropical Hurwitz space $\cH^{trop}_{g,d}(\vec\mu)$ of degree $d$ tropical covers of genus $h$ $(r+s)$-marked curves by genus $g$, $n$-marked curves, having ramification profiles over the $i^{\mathrm{th}}$ marked point given by the partition $\mu^i$. This construction is a variation on the procedure used in~\cite{ACP}, which we briefly outline.

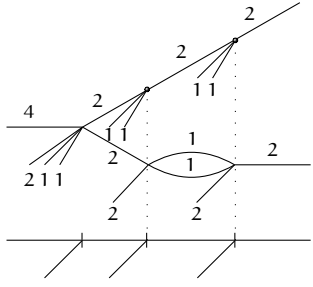
\begin{figure}[bt]
\begin{tikzpicture}

\draw (-1,0)--(0,0);
\draw (0,0)--(30:3.3);
\draw (0,0)--(-30:1);
\path (-30:1) edge [bend left] (2,-0.5);
\path (-30:1) edge [bend right] (2,-0.5);
\draw (2,-0.5)--(3,-0.5);

\draw (0,0)--(-0.5,-0.5);
\draw (0,0)--(-0.7,-0.5);
\draw (0,0)--(-0.3,-0.5);
\draw (0.85,0.5)--(0.35,0);
\draw (-30:1)--(0.4,-1);
\draw (2.01,1.15)--(1.51,0.65);
\draw (2,-0.5)--(1.5,-1);
\draw (0.85,0.5)--(0.55,0);
\draw (2.01,1.15)--(1.71,0.65);


\draw node at (-0.7,0.2) {\tiny $4$};
\draw node at (1.3,1.0) {\tiny $2$};
\draw node at (.4,-0.4) {\tiny $2$};
\draw node at (1.42,-0.5) {\tiny $1$};
\draw node at (1.45,-0.15) {\tiny $1$};
\draw node at (2.5,-0.3) {\tiny $2$};

\draw node at (-0.7,-0.7) {\tiny $2$};
\draw node at (-0.3,-0.7) {\tiny $1$};
\draw node at (-0.5,-0.7) {\tiny $1$};
\draw node at (2.2,1.5) {\tiny $2$};
\draw node at (0.2,0.35) {\tiny $2$};
\draw node at (.55,-0.1) {\tiny $1$};
\draw node at (.35,-0.1) {\tiny $1$};
\draw node at (1.71,0.5) {\tiny $1$};
\draw node at (1.51,0.5) {\tiny $1$};
\draw node at (1.5,-1.15) {\tiny $2$};
\draw node at (0.4,-1.15) {\tiny $2$};

\draw [-|]  (-1,-1.5)--(0,-1.5);
\draw [-|]  (0,-1.5)--(0.85,-1.5);
\draw [-|]  (0.85,-1.5)--(2.01,-1.5);
\draw [-]  (2.01,-1.5)--(3,-1.5);

\draw [loosely dotted] (0.85,-1.5)--(0.85,0.5);
\draw [ball color=black] (0.85,0.5) circle (0.3mm);
\draw [loosely dotted] (2.01,-1.5)--(2.01,1);
\draw [ball color=black] (2.01,1.15) circle (0.3mm);

\draw(0,-1.5)--(-0.5,-2);
\draw (0.85,-1.5)--(0.35,-2);
\draw (2.01,-1.5)--(1.51,-2);
\end{tikzpicture}
\caption{A combinatorial type in the space $\cH^{trop}_{1\to 0,4}((4),(2,2))$ representing a top dimensional stratum. Vertices are undecorated to mean that their genus is $0$. The markings of the infinite edges are not depicted to avoid an overburdened figure.
Edges of the top graph are decorated with their expansion factor. Note that the local degree of the map at the leftmost vertex is $4$, whereas at all other vertices is $2$. }
\label{fig:combtype}
\end{figure}

A \textit{combinatorial type} $\Theta$ of  a tropical admissible cover is the data of a tropical admissible cover without the metric, as illustrated in Figure \ref{fig:combtype}. That is, $\Theta$ consists of a morphism of finite graphs, together  with a decoration of the edges of the source with integers recording the expansion factors. Such decoration makes the morphism harmonic. If $\Gamma_{src}\to \Gamma_{tgt}$ is a tropical admissible cover, we denote the associated combinatorial type by $[\Gamma_{src}\to \Gamma_{tgt}]$. If $e'$ is an edge of the finite graph $\Gamma_{src}$, we denote by $d_{e'}$ the expansion factor on the metrized edge corresponding to $e'$, on any tropical admissible cover of type $\Theta$. When we wish to speak of a combinatorial type without reference to the tropical cover that it came from, we refer to the type as a \textit{combinatorial admissible cover}. A metrization of the base graph fully determines the tropical Hurwitz cover, as we see in the next lemma. 

\begin{lemma}\label{lem:det}
Let $\Theta = [\Gamma_{src}\to \Gamma_{tgt}]$ be a combinatorial type for a tropical admissible cover. Given a metrization $\ell:E(\Gamma_{tgt})\to \RR_{\geq 0}\sqcup \{\infty\}$, there exists a unique metrization of $\Gamma_{src}$ making the resulting map an admissible cover of combinatorial type $\Theta$.
\end{lemma}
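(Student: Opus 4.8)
The plan is to observe that in a tropical admissible cover the expansion factor along every edge of the source is already recorded in the combinatorial type $\Theta$, and that this data, together with a metrization of $\Gamma_{tgt}$, pins down the metrization of $\Gamma_{src}$ via the defining relation between edge lengths and expansion factors. So the proof splits into a uniqueness step and an existence step, both essentially formal.

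First, for uniqueness, suppose $\ell': E(\Gamma_{src})\to \RR_{\geq 0}\sqcup\{\infty\}$ is a metrization such that $\varphi\colon (\Gamma_{src},\ell')\to (\Gamma_{tgt},\ell)$ is an admissible cover of combinatorial type $\Theta$. For a bounded edge $e'$ of $\Gamma_{src}$ mapping to the bounded edge $e=\varphi(e')$ of $\Gamma_{tgt}$, the requirement that the expansion factor of $\varphi$ along $e'$ equal the integer $d_{e'}$ prescribed by $\Theta$ reads $d_{e'}=\ell(e)/\ell'(e')$, whence $\ell'(e')=\ell(e)/d_{e'}$ is forced. Since no edges are contracted (nodes map to nodes, as recalled in Section~\ref{sec: dual-graph-covers}), every bounded edge of $\Gamma_{src}$ maps to a bounded edge of $\Gamma_{tgt}$, so this determines $\ell'$ on all bounded edges; the infinite edges are metrized as $[0,\infty]$, leaving no choice. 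Hence there is at most one such $\ell'$.

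For existence, define $\ell'(e'):=\ell(\varphi(e'))/d_{e'}$ on bounded edges and $\ell'(e'):=\infty$ on infinite edges. Since each $d_{e'}\geq 1$ and $\ell$ is valued in $\RR_{\geq 0}\sqcup\{\infty\}$, this is a legitimate metrization of $\Gamma_{src}$. With these lengths, the map $\varphi$ that sends each edge $e'$ affinely onto $\varphi(e')$ with slope $d_{e'}$ is continuous — vertices map to vertices, so there is no incompatibility at the gluings — and its expansion factors along all edges, finite and infinite, are precisely the integers prescribed by $\Theta$. It then only remains to note that $\varphi$ is harmonic and satisfies the local Riemann--Hurwitz condition at every vertex; but these are conditions on the graph map and the edge decorations of $\Theta$ alone (the local degrees $d_{p'}(\varphi)$ of \eqref{eq:deg} and the quantities in the local Riemann--Hurwitz equation are computed purely from this combinatorial data), and they hold because $\Theta$ was assumed to arise as $[\Gamma_{src}\to\Gamma_{tgt}]$ for an actual tropical admissible cover. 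Thus $\varphi\colon(\Gamma_{src},\ell')\to(\Gamma_{tgt},\ell)$ is an admissible cover of combinatorial type $\Theta$.

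The argument is basically bookkeeping, and there is no serious obstacle; the only point deserving explicit mention is the metric-independence of the local conditions — once harmonicity and the local Riemann--Hurwitz equation hold for one metrization of a given combinatorial type, they hold for every metrization, since nothing in those conditions refers to edge lengths. This is exactly what lets the single equation $\ell'(e')=\ell(\varphi(e'))/d_{e'}$ do all the work.
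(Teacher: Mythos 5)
Your proof is correct and follows essentially the same route as the paper's: define $\ell'(e')=\ell(\varphi(e'))/d_{e'}$, observe this is forced by the expansion-factor requirement, and note that harmonicity (and the local Riemann--Hurwitz condition) are purely combinatorial constraints already encoded in $\Theta$, hence independent of the metrization. Your write-up is somewhat more explicit about the uniqueness step and the metric-independence of the local conditions, but the argument is the same.
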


\begin{proof}
Let $e'$ be an edge of $\Gamma_{src}$ mapping to an edge $e$ of $\Gamma_{tgt}$. The type $\Theta$ carries the datum of an expansion factor $d_{e'}$ on $e$. If $\ell(e)$ is the length of the edge $e$, then $\ell(e') = \ell(e)/d_{e'}$ is the unique length on $e'$ that makes the resulting map piecewise linear of expansion factor $d_{e'}$. The maps on each edges glue, and satisfy the harmonicity condition by virtue of $\Theta$ being a combinatorial type. 
\end{proof}

%
%
%
%
%

\begin{definition}
Let $\Theta = [\theta:\Gamma_{src}\to\Gamma_{tgt}]$ be a combinatorial type for an admissible cover. Then an automorphism of $\Theta$ is the data of a commuting square of automorphisms of base and target
\[
\begin{tikzcd}
\Gamma_{src} \arrow{r}{\varphi_{src}} \arrow{d}[swap]{\theta} & \Gamma_{src} \arrow{d}{\theta}\\
\Gamma_{tgt} \arrow{r}[swap]{\varphi_{tgt}} & \Gamma_{tgt},
\end{tikzcd}
\]
where $\varphi_{src}$ and $\varphi_{tgt}$ are automorphisms, and $\varphi_{src}$ is required to preserve expansion factors on edges. That is, for any edge $e'\in \Gamma_{src}$, we have $d_{e'} = d_{\varphi(e')}$. 
\end{definition}

We use $Aut(\Theta)$ to denote the (finite)  group of automorphisms of $\Theta$. We denote by $Aut_0(\Theta)$ the subgroup of $Aut(\Theta)$ where $\varphi_{tgt}$ is the identity, that is, automorphisms of $\Gamma_{src}$ which cover the identity map.


\subsubsection{Hurwitz existence and local Hurwitz numbers} Let $\Theta = [\theta: \Gamma_{src}\to \Gamma_{tgt}]$ be a combinatorial type for an admissible cover. Associated to a vertex $\widetilde v$ of $\Gamma_{src}$, are \textit{local Hurwitz numbers}. That is, for every $\widetilde v\in \theta^{-1}(v)$, there is a Hurwitz number $h_{g\to h,d}(\vec \mu)$. Here, $g$ is the genus of $\widetilde v$, $h$ is the genus of $v$, $d$ is the local degree of $\theta$ at $v$, and $\vec \mu$ is given by the expansion factors along tangent directions at $\widetilde v$. We pause to note that such a local Hurwitz number can be zero. Finding a characterization of when discrete invariants compatible with the Riemann-Hurwitz equation give rise to a non-zero Hurwitz number is an open problem referred to as the {\it Hurwitz existence} problem.
It is sometimes  convenient to study all local Hurwitz numbers  for vertices above a given vertex simultaneously. We denote by $H(v)$ the product of all local Hurwitz numbers over vertices lying above $v$. 

Let $B$ be the number of interior edges of the 
target curve of $\Theta$. By Lemma \ref{lem:det}, a cover in this combinatorial type is determined by a choice of length in $\RR_{\geq 0}$ for each edge. Denote by $\sigma_\Theta = (\RR_{\geq0})^B$. The moduli space $\cH^{trop}_\Theta$ parametrizing covers with combinatorial type $\Theta$ is defined to be $ \sigma_\Theta/Aut(\Theta)$. Denote by $\overline \sigma_\Theta$ the canonical compactification $(\RR_{\geq0}\cup {+\infty})^B$. The extended cone $\Hbar^{trop}_\Theta$ is the quotient of  $\overline \sigma_\Theta$ by $Aut(\Theta)$. We only consider cones such that $H(v)$ is nonzero for all vertices $v$ in $\Gamma_{tgt}$. 

\subsubsection{Constructing the tropical moduli space: graph contractions and gluing}\label{sec: contractions}

A weighted graph contraction of $\Gamma$ is a composition of edge contractions of the underlying graph $\alpha: \Gamma\to \widehat\Gamma$, endowed with a canonical genus function, $g_{\widehat\Gamma}(v) = g_\Gamma(\alpha^{-1}v)$. 

\begin{proposition}
Given a combinatorial admissible cover $\theta: G_{src}\to G_{tgt}$, a graph contraction $G_{tgt}\to \widehat G_{tgt}$ induces a graph contraction $\widehat G_{src}$ of $G_{src}$, together with a combinatorial admissible cover $\widehat G_{src}\to \widehat G_{tgt}$. 
\end{proposition}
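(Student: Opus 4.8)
The plan is to reduce to a single edge contraction, construct the contracted cover explicitly, and then verify that the two defining conditions of a combinatorial admissible cover --- harmonicity and the local Riemann--Hurwitz equation at every vertex --- are inherited.

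First I would observe that any graph contraction is a composition of single-edge contractions, so it is enough to treat the case in which $G_{tgt}\to\widehat G_{tgt}$ contracts one interior edge $e$. Since no edge of an admissible cover is contracted by $\theta$, the set $S:=\theta^{-1}(e)$ consists of honest interior edges of $G_{src}$, and infinite (marked) edges are untouched. Define $\widehat G_{src}$ to be the contraction of $G_{src}$ along every edge of $S$, with its canonically induced genus function, and define $\widehat\theta\colon\widehat G_{src}\to\widehat G_{tgt}$ by: (i) on edges, the restriction of $\theta$, since every edge of $G_{src}$ outside $S$ maps to an edge $\neq e$, hence to an edge of $\widehat G_{tgt}$, retaining the same expansion factor and marking data; (ii) on vertices, sending each connected component $\widetilde K$ of the subgraph of $G_{src}$ spanned by $S$, which collapses to a single vertex $\widehat{\widetilde v}$ of $\widehat G_{src}$, to the vertex $\hat v$ of $\widehat G_{tgt}$ onto which $e$ collapses. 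This is well defined because every vertex of $\widetilde K$ maps under $\theta$ to an endpoint of $e$, and $\widehat\theta$ contracts no edges.

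The substance of the argument is verifying the two conditions at the new vertices $\widehat{\widetilde v}$; at every unaffected vertex nothing has changed. Write $e=v_1v_2$, allowing $v_1=v_2$ when $e$ is a loop, and split $V(\widetilde K)=A\sqcup B$ by whether a vertex lies over $v_1$ or $v_2$. Applying harmonicity of $\theta$ along the tangent direction $e$ at each vertex of $\widetilde K$ yields the key identity
\[
\sum_{\widetilde v\in A} d_{\widetilde v}(\theta) \;=\; \sum_{\widetilde e\in E(\widetilde K)} d_{\widetilde e} \;=\; \sum_{\widetilde v\in B} d_{\widetilde v}(\theta) \;=:\; D ,
\]
since the local degree of $\theta$ at a vertex of $A$ equals the total expansion factor of the $S$-edges at that vertex (all of which lie in $\widetilde K$), and each edge of $\widetilde K$ is counted once from each endpoint. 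Now, for any tangent direction $\delta$ at $\hat v$, represented by an edge $f\neq e$ at $v_i$, the edges of $\widehat G_{src}$ at $\widehat{\widetilde v}$ mapping to $\delta$ are precisely the non-$S$ edges at the vertices of $\widetilde K$ over $v_i$ that map to $f$; summing their expansion factors and using harmonicity of $\theta$ at each such vertex gives $\sum d_{\widetilde v}(\theta)=D$, independent of $\delta$. Thus $\widehat\theta$ is harmonic at $\widehat{\widetilde v}$ with local degree $D$.

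For the local Riemann--Hurwitz equation I would sum the Riemann--Hurwitz equation of $\theta$ over all vertices of $\widetilde K$. Using the identity above, the formula $g(\widehat{\widetilde v})=h^1(\widetilde K)+\sum_{\widetilde v\in V(\widetilde K)}g(\widetilde v)$ with $h^1(\widetilde K)=|E(\widetilde K)|-|V(\widetilde K)|+1$, and $g(\hat v)=g(v_1)+g(v_2)$ (respectively $g(v)+1$ when $e$ is a loop), the ramification contributions of the collapsed edges $E(\widetilde K)$ and the genus jump at the base vertex cancel exactly, leaving $2-2g(\widehat{\widetilde v})=D(2-2g(\hat v))-\sum_{\widetilde f}(d_{\widetilde f}-1)$, the sum running over the edges at $\widehat{\widetilde v}$ with loops counted twice. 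The main obstacle is precisely this bookkeeping: one must work with flags rather than edges, correctly account for edges of $G_{src}$ whose two endpoints both lie in $\widetilde K$ (which become loops at $\widehat{\widetilde v}$), and treat the loop case $v_1=v_2$ separately. Once this is checked, $\widehat\theta$ is a combinatorial admissible cover, and iterating over the sequence of edge contractions making up $G_{tgt}\to\widehat G_{tgt}$ finishes the proof.
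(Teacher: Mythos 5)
Your construction is exactly the one in the paper: reduce to a single edge contraction, contract every preimage edge in the source, keep the expansion factors, and define the map on vertices via lifts. The paper simply asserts that ``the result follows,'' whereas you additionally verify harmonicity at the new vertices (via the identity $\sum_{\widetilde v\in A} d_{\widetilde v}(\theta)=\sum_{\widetilde e\in E(\widetilde K)}d_{\widetilde e}=\sum_{\widetilde v\in B} d_{\widetilde v}(\theta)$) and the local Riemann--Hurwitz condition by summing over the collapsed component; both checks are correct and a welcome addition to the terse published argument.
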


\begin{proof}
It suffices to prove the proposition for a single edge contraction $G_{tgt}\to \widehat G_{tgt}$, contracting $e$. Define the contraction $G_{src}\to\widehat G_{src}$ by contracting every edge $e'$ that maps to $e$. The graph $\widehat G_{src}$ inherits a canonical genus function as above. Define $\widehat \theta: \widehat G_{src}\to \widehat G_{tgt}$ as follows. For a vertex $\widehat v$ of $\widehat G_{src}$, let $v$ be any lift of $v$ under the map $V(G_{src})\to V(\widehat G_{src})$. Define $\widehat \theta(\widehat v)$ to be the image of $v$ under the composite
\[
G_{src}\to G_{tgt}\to \widehat G_{tgt}.
\]
It is clear that this map is well defined. An edge of $\widehat G_{src}$ corresponds to a unique edge of $G_{src}$ and thus determines a map of graphs $\widehat \theta: \widehat G_{src}\to \widehat G_{tgt}$. The expansion factors on $\widehat G_{src}$ coincide with the expansion factors of the corresponding edges on $G_{src}$. The result follows.
\end{proof}

In other words, contractions of covers are fully determined by contractions of their 
target graphs.

Let $\Theta = [\theta:\Gamma_{src}\to \Gamma_{tgt}]$ be a combinatorial type of tropical admissible covers, such that $H(v)$ is nonzero for all $v\in \Gamma_{tgt}$. Every automorphism of the combinatorial type $\Theta$ determines an automorphism of the associated cone $\sigma_\Theta$. Moreover, every graph contraction of combinatorial types $\Theta\to \Theta'$ determines a map of cones $\sigma_{\Theta'}\to \sigma_\Theta$. It is clear that ranging over $\Theta$ as above, this forms a directed system of topological cones with integral structure.

The moduli space of tropical admissible covers is constructed as the topological colimit 
\[
\cH^{trop}_{g,d}(\vec \mu) = \varinjlim (\sigma_\Theta,j_\omega),
\] 
where $\Theta$ ranges over the combinatorial types, and $j_\omega$ is a contraction of combinatorial types or an automorphism of types. \textit{After performing a barycentric subdivision,} the space $\cH^{trop}_{g,d}(\vec\mu)$ inherits the structure of a cone complex with integral structure. The extended cones $\overline\sigma_\Theta$ are glued similarly to obtain $\Hbar^{trop}_{g,d}(\vec \mu)$, which naturally carries an extended cone complex structure.

\begin{figure}[tb]
\begin{tikzpicture}
\tikzset{me/.style={to path={
\pgfextra{%
 \pgfmathsetmacro{\startf}{-(#1-1)/2}  
 \pgfmathsetmacro{\endf}{-\startf} 
 \pgfmathsetmacro{\stepf}{\startf+1}}
 \ifnum 1=#1 -- (\tikztotarget)  \else
     let \p{mid}=($(\tikztostart)!0.5!(\tikztotarget)$) 
         in
\foreach \i in {\startf,\stepf,...,\endf}
    {%
     (\tikztostart) .. controls ($ (\p{mid})!\i*6pt!90:(\tikztotarget) $) .. (\tikztotarget)
      }
      \fi   
     \tikztonodes
}}}   
\node[circle,fill=black,inner sep=1pt,draw] (a) at (0,0) {};
\node[circle,fill=black,inner sep=1pt,draw] (b) at (3,0) {};
\node[circle,fill=black,inner sep=1pt,draw] (c) at (-0.7,0.7) {};
\node[circle,fill=black,inner sep=1pt,draw] (d) at (-0.7,-0.7) {};
\node[circle,fill=black,inner sep=1pt,draw] (c) at (3.7,0.7) {};
\node[circle,fill=black,inner sep=1pt,draw] (d) at (3.7,-0.7) {};

\draw node at (-0.1,0.5) {{\tiny$2$}};
\draw node at (-0.1,-0.5) {{\tiny$2$}};

\begin{scope}[shift = {(3.2,0)}]
\draw node at (-0.1,0.5) {{\tiny$2$}};
\draw node at (-0.1,-0.5) {{\tiny$2$}};
\end{scope}

\draw (0,0)--(-0.5,0.5);
\draw [densely dotted] (-0.5,0.5)--(-0.7,0.7);
\draw (0,0)--(-0.5,-0.5);
\draw [densely dotted] (-0.5,-0.5)--(-0.7,-0.7);

\draw (3,0)--(3.5,0.5);
\draw [densely dotted] (3.5,0.5)--(3.7,0.7);
\draw (3,0)--(3.5,-0.5);
\draw [densely dotted] (3.5,-0.5)--(3.7,-0.7);

\draw (a) edge[me=4] (b); 

\draw [->] (1.5,-1)--(1.5,-1.5);

\begin{scope}[shift ={(0,-3)}]
\node[circle,fill=black,inner sep=1pt,draw] (a) at (0,0) {};
\node[circle,fill=black,inner sep=1pt,draw] (b) at (3,0) {};
\node[circle,fill=black,inner sep=1pt,draw] (c) at (-0.7,0.7) {};
\node[circle,fill=black,inner sep=1pt,draw] (d) at (-0.7,-0.7) {};
\node[circle,fill=black,inner sep=1pt,draw] (c) at (3.7,0.7) {};
\node[circle,fill=black,inner sep=1pt,draw] (d) at (3.7,-0.7) {};

\draw (0,0)--(-0.5,0.5);
\draw [densely dotted] (-0.5,0.5)--(-0.7,0.7);
\draw (0,0)--(-0.5,-0.5);
\draw [densely dotted] (-0.5,-0.5)--(-0.7,-0.7);

\draw (3,0)--(3.5,0.5);
\draw [densely dotted] (3.5,0.5)--(3.7,0.7);
\draw (3,0)--(3.5,-0.5);
\draw [densely dotted] (3.5,-0.5)--(3.7,-0.7);

\draw (a) edge[me=2] (b); 
\end{scope}

\begin{scope}[shift={(6,-3)}]
\draw node at (4,2) {$\curvearrowleft (\mathbb Z/2)^2$};
\draw [->] (0,0)--(3,0);
\draw [dashed, ->] (0,0)--(45:3);
\draw [ball color=black] (0,0) circle (0.4mm);
\foreach \a in {9,18,27,36}
	\draw [loosely dotted] (\a:1)--(\a:2.5);

\end{scope}

\end{tikzpicture}   
\caption{A combinatorial type and its associated cone. Undecorated internal edges of the top graph all have expansion factor $1$.The $2-1$ map to the base identifies the top pair of interior edges, and the bottom pair of interior edges. There is an effective $\mathbb Z/2$ group of automorphism switching the two interior edges of the bottom graph, and a  $(\mathbb Z/2)^2$  worth of automorphisms acting trivially on the cone,  corresponding to switching pairs of interior edges in the top graph mapping to the same edge. The dashed line on the left indicates folding from the automorphisms on the target. }
\label{fig: z/2-cone}
\end{figure}

\subsubsection{Automorphisms and weights on combinatorial types.} A stable tropical curve with no loop edges  generically has no automorphisms, since generic edge lengths are distinct, and automorphisms are required to be isometries. However, an admissible cover may have automorphisms generically, and act trivially on a generalized cone of the moduli space. Take for instance the cover depicted in Figure~\ref{fig: z/2-cone}. These automorphisms become relevant when we extract enumerative information from the degree of the branch map. 

\begin{remark}
Recall that such automorphisms are familiar in the classical setting. The hyperelliptic locus of $\Mbar_g$ can be understood as the space $\Hbar_{g\to 0,2}((2),\ldots,(2))$, which is the stack quotient of $\Mbar_{0,2g+2}$ by the trivial $\ZZ/2$ action. Here, the $\ZZ/2$ is naturally seen as acting on the covering curve, c.f. Figure~\ref{fig: aut-of-cover}. 
\end{remark}

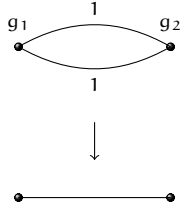
\begin{figure}[bt]
\begin{tikzpicture}
\draw [ball color=black] (0,0) circle (0.5mm);
\draw [ball color=black] (2,0) circle (0.5mm);
\path (0,0) edge [bend left] (2,0);
\path (0,0) edge [bend right] (2,0);
\draw node at (0,0.3) {\tiny$g_1$};
\draw node at (2,0.3) {\tiny$g_2$};
\draw node at (1,0.5) {\tiny$1$};
\draw node at (1,-0.5) {\tiny$1$};

\draw [ball color=black] (0,-2) circle (0.5mm);
\draw [ball color=black] (2,-2) circle (0.5mm);
\draw (0,-2)--(2,-2);

\draw [->] (1,-1)--(1,-1.5);
\end{tikzpicture}
\caption{A degree $2$ cover. The map identifies both finite edges in the source to the unique  edge in the target. The involution acts trivially on the one dimensional cone parameterizing metrization of the target edge.}
\label{fig: aut-of-cover}
\end{figure}

\begin{remark}{\bf (Interpretation of the compactification)} A compactification $\Mbar_{g,n}^{trop}$ is obtained from $\cM^{trop}_{g,n}$ by allowing edge lengths of interior edges to become infinity. This idea has a nice interpretation in terms of analytifications of curves. A skeleton of a curve over a valued field which has an internal edge of infinite length corresponds to a marked semistable model for that curve which has nodal generic fiber. In fact, the generic fiber of a semistable model is singular if and only if the associated skeleton has an infinite internal edge. See~\cite[Section 5]{BPR} and~\cite[Section 1.4]{Ber90} for details. 

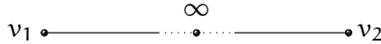
\begin{figure}[h!]
\begin{tikzpicture}
\draw node at (0,0.3) {\small$\infty$};
\draw node at (-2.3,0) {\small $v_1$};
\draw node at (2.3,0) {\small $v_2$};
\draw [ball color = black] (-2,0) circle (0.4mm);
\draw [ball color = black] (2,0) circle (0.4mm);
\draw [ball color = black] (0,0) circle (0.4mm);
\draw (-2,0)--(-0.5,0);
\draw [dotted] (-0.5,0)--(0.5,0);
\draw (0.5,0)--(2,0);
\end{tikzpicture}
\caption{An interior infinite edge, topologized as two infinite edges with the points at infinity identified.}
\end{figure}

In identical fashion, we obtain a compactification of the tropical Hurwitz space of tropical admissible covers, by allowing edges of the base, and the edges mapping to it, to tend to infinity. This is precisely the space $\Hbar^{trop}_{g,d}(\vec\mu)$. 
\end{remark}

The construction of Abramovich, Caporaso and Payne involves taking a colimit of cone complexes in the category of topological spaces and \textit{not} in the category of topological stacks. As a consequence, we need to explicitly remember the data of stabilizers in our enumerative calculations. 
\begin{definition}\label{wetheta}
Let $\Theta$ be a combinatorial type. We define its weight $\omega(\Theta)$ as the product of:
\begin{enumerate}[(W1)]
\item A factor of $\frac{1}{|Aut_0(\Theta)|}$.
\item A factor of local Hurwitz numbers $\prod_{v\in \Gamma_{tgt}} H(v)$.
\item A factor of $M = \prod_{e\in E(\Gamma_{tgt})} M_e$, where $M_e$ is the product of the expansion factors above the edge $e$, divided by their LCM.
\end{enumerate}
\end{definition}

\begin{remark} With  apologies for some unavoidable forward-referencing, let us briefly motivate where these weight factors arise in our theory. While $(W1)-(W3)$ are defined in
terms of combinatorics of the tropical covers, they have natural counterparts in the classical theory of admissible covers. 
Weight (W1) accounts for automorphisms of covers lifting the identity map on the target curve. We emphasize that by construction, these automorphisms act trivially on the space of tropical covers.
Term (W2) encodes the fact that there may be multiple zero dimensional strata in $\Hbar_{g,d}(\vec \mu)$ which have the same dual graph. 
The map 
\[
trop_\Sigma:{\Sigma}(\Hbar^{an}_{g\to h,d}(\vec\mu))\to \cH^{trop}_{g \to h,d}(\vec\mu),
\] 
defined in Theorem~\ref{thm: theorem1} identifies the distinct generalized cones of $\Sigma(\Hbar^{an}_{g,d}(\vec\mu))$ with a given dual graph to a single generalized cone in $\cH^{trop}_{g,d}(\vec\mu)$. See Section~\ref{sec: trop-sigma}.

Finally (W3) can be thought of either as ``ghost  automorphisms" coming from the orbifold structure on a twisted cover~\cite{ACV}. That is, it accounts for the fact that there one may place orbifold structure on an untwisted admissible cover in different ways. It may also be seen as arising from the normalization of the Harris--Mumford admissible cover space. We discuss this further in Section~\ref{sec: trop-sigma},  after discussing the deformation theory of $\Hbar_{g,d}(\vec\mu)$. (W3) is also  the generalization of the index of the matrix of ``length constraint equations'' studied in~\cite{BM13,CJM1}. 
\end{remark}

\subsection{Skeleta of toroidal embeddings} Associated to any toroidal embedding $U\hookrightarrow X$ is a cone complex with integral structure which we refer to as the skeleton $\Sigma(X)$. The idea is that locally analytically near a point $x$, $X$ looks like an affine toric variety $V_\sigma$. Thus, we can build a cone complex from these cones $\sigma$. The key difference between fans and abstract cone complexes is that abstract cone complexes do not come with a natural embedding into a vector space. In the case that the toroidal embedding is a toric variety, this cone complex is precisely the fan. It is worth observing though that unlike a toric variety, where the fan determines the variety, the cone complex $\Sigma(X)$ is far from determining $X$. 

For our purposes, the most important example of a toroidal embedding is the inclusion of the complement of a divisor with normal crossings, and we now explore this. Let $X$ be a normal scheme of dimension $n$. If $U\hookrightarrow X$ is given by the complement of a divisor with (not necessarily strict) normal crossings, then there is a natural stratification on $X$. That is, the $0$-strata are the $n$-fold intersections of divisors $D_i$, the $1$-strata are the $(n-1)$-fold intersections and so on. The top dimensional stratum is $U$. Consider a zero stratum $x\in X$. A formal neighborhood of $x$ looks like $n$ hyperplanes meeting at $x$. Locally near $x$, up to scaling, we obtain defining equations of these hyperplanes, say $f_1,\ldots, f_n$. These equations yield a system of formal local monomial coordinates near $x$. The completion of the local ring at $x$ is the coordinate ring of a formal affine space. The cone associated to this point is the standard cone for  the toric variety $\mathbb A^n$. Call this cone $\sigma$. For a $1$-stratum $W$, we get $(n-1)$ defining equations, giving a formal system of coordinates for an $(n-1)$ dimensional affine space. The cone associated to $W$ is the standard cone for $\mathbb A^{n-1}$. Call this cone $\tau$. Moreover, if $x\in W$, then the associated cone complex naturally identifies $\tau$ as a face of $\sigma$. This construction generalizes in the natural way, and the cones assemble into a cone complex $\Sigma(X)$. This yields an order reversing bijection between strata of the toroidal scheme $X$ and cones of the cone complex $\Sigma(X)$.

In~\cite{Thu07}, Thuillier shows that this cone complex lives naturally inside the Berkovich analytification of $X$. More precisely, given a toroidal embedding $U\hookrightarrow X$ with $X$ proper, he constructs a continuous (non-analytic) self-map of the Berkovich analytification of $X$, 
\[
\bm{p}_X:X^{an}\to X^{an}.
\] 
For non-proper but separated $X$, the identical statement holds, upon replacing $X^{an}$ with Thuillier's analytic formal fiber $X^\beth$. 

\begin{definition}\label{def: proj-to-skeleton}
The image of $\bm{p}_X$ is the \textit{skeleton} of $X$, denoted $\Sigma(X)$. The map $\bm p_X$ is referred to as the \textit{retraction to the skeleton}.
\end{definition}

The crucial fact for our purposes is the \textit{existence} of such a map and its properties. For an explicit realization in coordinates, see~\cite[Section~5.2]{ACP}. Abramovich, Caporaso, and Payne extend this construction to toroidal compactifications of Deligne--Mumford stacks, and produce a generalized (extended) cone complex and a retraction from the Berkovich analytification. We briefly describe the construction of the cone complex here. A detailed discussion of this retraction map in the setting of log structures may also be found in~\cite{U13}.

\subsubsection{Local toric models} A toroidal scheme $U\hookrightarrow X$ is described in a formal neighborhood of every point $x\in X$ by a toric chart $V_\sigma$. The cone $\sigma$ is described as follows. Let $M$ be the group of Cartier divisors supported on the complement of $U$. Let $M^+$ be the submonoid of effective Cartier divisors. Then the cone $\sigma$ is identified with the space of homomorphisms to the (additively written) monoid $\RR_{\geq 0}$,
\[
Hom(M^+,\RR_{\geq 0}),
\]
equipped with the natural structure of a rational polyhedral cone with integral structure. In the language of logarithmic geometry, sheafifying $M^+$ produces the \textit{characteristic monoid sheaf}, and $M$ produces the \textit{characteristic abelian sheaf.} The connections between tropical geometry and log geometry have been explored by Gross and Siebert~\cite[Appendix B]{GS13}, and Ulirsch, see~\cite{U13}.

\section{Tropicalization of the moduli space of admissible covers}\label{sec: tropicalizing-admissible-covers}

\subsection{Abstract tropicalization for admissible covers}\label{sec: metrization}
In this section we describe an abstract tropicalization for admissible covers.  The following tropicalization map --- which we denote $trop$, is obtained in direct analogy with the moduli space of curves, discussed in~\cite{ACP}. 

Let $\Hbar^{an}_{g,d}(\vec\mu)$  denote the Berkovich analytification of $\Hbar^{}_{g,d}(\vec\mu)$. A point $[D\to C]$ of $\Hbar^{an}_{g,d}(\vec\mu)$ is represented by an admissible cover over $Spec(K)$, where $K$ is a valued field extension of $\CC$. By properness of the stack $\Hbar^{}_{g,d}(\vec\mu)$, after ramified base change, the map extends uniquely to a family of curves over $Spec(R)$, where $R$ is a rank 1 valuation ring with valuation $val(-)$. Let $[\Gamma_D\to\Gamma_C]$ be the associated morphism of dual graphs of the special fibers. This morphism is well defined by the axioms placed on admissible covers. The ramification data of the admissible cover determines the expansion factors on all edges, therefore, we obtain a tropical admissible cover by metrizing these dual graphs. Let $e$ be an edge of $\Gamma_{C}$ corresponding to a node $q$. Choose an \'etale neighborhood of the node at $q$. The local equation is given by  $x_1x_2 = f$. We metrize the edge $e$ as $[0,val(f)]$. Note that an edge is metrized with length $\infty$ when $f=0$. The analytification of the boundary of $\Hbar^{an}_{g,d}(\vec\mu)$ parametrizes families of covers over a valuation ring whose generic fiber is a map of nodal curves

\begin{definition}\label{def: trop}
Let $[D\to C]$ be a point of $\Hbar^{an}_{g,d}(\vec\mu)$. With the notation above, we define the map
\begin{eqnarray*}
trop: \Hbar^{an}_{g,d}(\vec\mu) &\to& \Hbar^{trop}_{g,d}(\vec\mu)\\
{[D\to C]}&\mapsto& [\Gamma_D\to \Gamma_C]. 
\end{eqnarray*}
The map $trop$ naturally restricts to the open moduli spaces to give a map $\cH^{an}_{g,d}(\vec\mu) \to \cH^{trop}_{g,d}(\vec\mu)$.
\end{definition}

Given a family of admissible covers $[\pi: D\to C]$ over $Spec(K)$ where $K$ is a rank-$1$ valued field extending $\CC$, we have defined the associated tropical admissible cover in terms of models. Instead, one may work with the analytic curves themselves. 

In what follows, we freely use the language of semistable vertex sets from~\cite{BPR}. Let $[\pi: D\to C]$ be as above, with associated tropical cover $[\pi^{trop}:\Gamma_D\to \Gamma_C]$.

\begin{proposition}
There exist embeddings $\sigma_D:\Gamma_D\hookrightarrow D^{an}$ and $\sigma_C: \Gamma_C\hookrightarrow C^{an}$ such that the image of $\sigma_D$ (resp. $\sigma_C$) is a strong deformation retract of the analytic curve $D^{an}$ (resp. $C^{an}$). Moreover, the restriction of $\pi^{an}:D^{an}\to C^{an}$ to $\Gamma_D$ coincides with the map $\pi^{trop}:\Gamma_D\to \Gamma_C$. That is, the following diagram commutes
\[
\begin{tikzcd}
\Gamma_D\arrow{r}{\sigma_D} \arrow{d}[swap]{\pi^{trop}} & D^{an} \arrow{d}{\pi^{an}}\\
\Gamma_C\arrow{r}[swap]{\sigma_C} & C^{an}
\end{tikzcd}
\]
\end{proposition}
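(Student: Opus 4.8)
The plan is to reduce everything to the well-understood case of a single curve over a valued field, as developed in~\cite{BPR}, and then to check that the two retractions are compatible with the morphism $\pi$. First I would recall that for any smooth curve $X$ over a complete rank-$1$ valued field $K$, and any semistable model $\mathcal X$ over $R$, the dual graph $\Gamma_{\mathcal X}$ of the special fiber (metrized by the valuations of the node-smoothing parameters) embeds canonically into $X^{an}$ as a strong deformation retract; this is precisely the skeleton $\Sigma(\mathcal X)\subset X^{an}$ associated to the semistable vertex set determined by $\mathcal X$. Applying this to the chosen model $\mathcal D\to\mathcal C$ of $[\pi: D\to C]$ over $\spec R$ (obtained after base change, as in Definition~\ref{def: trop}) produces the embeddings $\sigma_D:\Gamma_D\hookrightarrow D^{an}$ and $\sigma_C:\Gamma_C\hookrightarrow C^{an}$, with the metrics matching the ones used to define $trop$. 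So the only real content is the commutativity of the square.

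Next I would verify the commutativity by a local analysis. Away from the nodes, $\pi$ is finite \'etale, so $\pi^{an}$ maps each open annulus in the semistable decomposition of $D^{an}$ onto the corresponding annulus of $C^{an}$, and on the skeleton of an annulus (a segment) the induced self-map is affine with integer slope equal to the local degree. At a node, the \'etale-local description in axiom (iv) of an admissible cover says $D$ is $y_1y_2 = f$, $C$ is $x_1x_2 = f^{\ell}$, and $\pi$ is $x_i = y_i^{\ell}$; on analytifications this is exactly the degree-$\ell$ self-map of an open annulus, whose effect on skeleta is multiplication by $\ell$, i.e. the expansion factor $\ell = m_e$ recorded on the corresponding edge of $\Gamma_D$. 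Since the retraction $\tau_X: X^{an}\to\Sigma(\mathcal X)$ is functorial for finite morphisms of semistable models in this sense (cf.~\cite[Section 5]{BPR}), one concludes that $\pi^{an}$ carries $\sigma_D(\Gamma_D)$ into $\sigma_C(\Gamma_C)$ and that the restriction is precisely the piecewise-linear map $\pi^{trop}$, which by Lemma~\ref{lem:det} and the construction of $trop$ is the harmonic morphism with the prescribed expansion factors. Combining the edge-by-edge and vertex-by-vertex checks gives commutativity of the diagram; the strong deformation retractions $D^{an}\simeq\Gamma_D$, $C^{an}\simeq\Gamma_C$ can moreover be chosen compatibly, since the standard retraction of an annulus onto its skeleton is equivariant for the degree-$\ell$ self-map.

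The main obstacle I anticipate is bookkeeping at the nodes: one must make sure that the semistable model $\mathcal D$ dominating the given model $\mathcal C$ really is semistable (so that $D^{an}$ deformation retracts onto $\Gamma_D$ at all), and that the minimal semistable vertex set making $\pi^{an}$ a finite map of skeleta coincides with the one coming from the admissible-cover structure — in particular that no edges of $\Gamma_D$ get subdivided or contracted. This is guaranteed by the kissing condition (the equality of ramification indices on the two branches at a node, noted in the remark after the definition of admissible cover), which forces $\mathcal D\to\mathcal C$ to be \'etale-locally of the standard form above with a single parameter $\ell$, so that the preimage of the skeleton $\Gamma_C$ is exactly $\Gamma_D$ with the induced metric. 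Once this is in place the argument is a direct application of the compatibility of Berkovich skeleta with finite morphisms, and no further subtleties arise.
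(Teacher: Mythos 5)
Your proposal is correct and follows essentially the same route as the paper: extend $[\pi:D\to C]$ to a semistable model after ramified base change, invoke~\cite[Section 5]{BPR} to realize $\Gamma_D$ and $\Gamma_C$ as skeleta and strong deformation retracts, and then check compatibility of $\pi^{an}$ with the retractions. The only difference is that where the paper simply cites~\cite[Section 4]{ABBR} for the compatibility of skeleta with the finite morphism, you unpack that citation into the explicit local computation on annuli (the degree-$\ell$ Kummer map at each node), which is exactly the content being invoked.
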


\begin{proof}
After a ramified base change, $[\pi: D\to C]$ extends to an admissible cover of marked semistable $R$-curves $\mathscr D\to \mathscr C$. Here, $R$ is a rank-$1$ valuation ring with residue field $k = \CC$. From~\cite[Section 5]{BPR}, the marked models $\mathscr D$ and $\mathscr C$ determine semistable vertex decompositions for the analytic curves $D^{an}$ and $C^{an}$ respectively. Every component of $\mathscr D_k$ maps onto a component of $\mathscr C_k$. Since the ramification points are all marked, the result now follows from~\cite[Section 4]{ABBR}.
\end{proof}


Given a stable tropical curve $\Gamma$, there always exists a marked model $\mathscr C$ over a rank-$1$ valuation ring, such that the metrized dual graph of $\mathscr C$ is $\Gamma$. The situation for admissible covers is more complicated, but still controllable, as exhibited by the following proposition.

\begin{proposition}
Let $\Theta$ be a combinatorial type for a tropical admissible cover, and $\Gamma_{src}\to \Gamma_{tgt}$ be a metrization of $\Theta$, with $\Gamma_{tgt}$ a totally degenerate, trivalent tropical curve. Then, $\Gamma_{src}\to \Gamma_{tgt}$ arises as the skeleton of an admissible cover of curves if and only if for all $v\in \Gamma_{tgt}$, $H(v)\neq 0$.
\end{proposition}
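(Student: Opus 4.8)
The plan is to prove the two directions separately, with the ``only if'' direction being essentially a reinterpretation of the definitions and the ``if'' direction being the substantive construction. For the forward direction, suppose $\Gamma_{src}\to\Gamma_{tgt}$ is the skeleton of an admissible cover $\pi\colon D\to C$ over a rank-$1$ valuation ring $R$ with residue field $\CC$. Each vertex $v$ of $\Gamma_{tgt}$ corresponds to an irreducible component $C_v$ of the special fiber $C_k$; since $\Gamma_{tgt}$ is totally degenerate and trivalent, $C_v\cong\PP^1$, and the restriction $\pi^{-1}(C_v)\to C_v$ is an admissible cover of a genus $0$ curve, marked at the special points (nodes and markings) lying on $C_v$. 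The discrete data of this restricted cover --- the genera of the components above $v$, the local degree, and the ramification profiles recorded by the expansion factors along the edges at $v$ --- are exactly the invariants appearing in the local Hurwitz number $H(v)$. The existence of the cover forces $H(v)\neq 0$ for every $v$.

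For the converse, assume $H(v)\neq 0$ for all $v\in\Gamma_{tgt}$. The strategy is to build the admissible cover vertex-by-vertex over the base and then glue along the bounded edges. Concretely: first realize the totally degenerate trivalent tropical curve $\Gamma_{tgt}$ as the skeleton of a marked semistable model $\mathscr C$ over a suitable valuation ring $R$ (this is possible by the standard fact quoted just before the proposition). Over each component $C_v\cong\PP^1_{\CC}$ of the special fiber, the nonvanishing of the local Hurwitz number $H(v)$ guarantees the existence of an actual admissible cover $D_v\to C_v$ of genus $0$ curves with the prescribed ramification profiles over the special points of $C_v$; the kissing/balancing condition on expansion factors (harmonicity of $\Theta$, together with the ``kissing condition'' from the definition of admissible covers) ensures that the ramification indices agree on the two branches at each node of $\mathscr C_k$. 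One then deforms/glues these component-wise covers: choose local smoothing parameters for the nodes of $\mathscr C$ matching the prescribed edge lengths of $\Gamma_{tgt}$, and use the local model $x_1x_2 = f$ on $C$, $y_1y_2 = g$ on $D$, $x_i = y_i^\ell$ over each node (with $\ell$ the common ramification index) to produce an admissible cover $\mathscr D\to\mathscr C$ over $R$ (after a ramified base change to accommodate the $\ell$-th roots). By construction its skeleton is the given metrization of $\Theta$.

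The main obstacle is the gluing/smoothing step: one must check that the component-wise covers $D_v\to C_v$, which exist individually by Hurwitz existence, can be chosen \emph{compatibly} so that they glue to a genuine admissible cover over $R$ with the correct expansion factors along \emph{bounded} edges. This requires matching not just the ramification profiles at each node (a discrete condition, handled by harmonicity) but also ensuring the smoothing is admissible in the sense of condition (iv) of the definition of admissible cover --- i.e. that the local exponents $\ell$ on the source side are consistent with the global degree $d$ and with the expansion factors $d_{e'}$ of $\Theta$. Here one uses that $\Theta$ being a combinatorial type already encodes a consistent choice of expansion factors, and that the local Hurwitz number $H(v)$ counts covers with \emph{exactly} those profiles; the freedom in choosing which of the (finitely many) covers over each $C_v$ to use, together with the freedom in the gluing isomorphisms at the nodes, is enough to produce at least one admissible cover. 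A clean way to package the existence is to invoke the deformation theory / logarithmic smoothness of the stack $\Hbar_{g,d}(\vec\mu)$: the component-wise covers assemble to a point of the boundary stratum indexed by $\Theta$, and since that stratum is nonempty precisely when all $H(v)\neq 0$, the point smooths along the directions corresponding to the bounded edges, yielding the desired family over $R$.
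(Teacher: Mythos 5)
Your argument is correct, and the ``only if'' direction and the reduction to local Hurwitz existence at each vertex are exactly what the paper does (its proof opens by asserting that a nodal admissible cover over $\CC$ with dual combinatorial type $\Theta$ exists precisely when all $H(v)\neq 0$). Where you diverge is in the smoothing step. The paper dispatches it in one line by citing the simultaneous smoothing theorem for metrized complexes of curves, \cite[Theorem~B]{ABBR}: the glued component-wise covers constitute a harmonic morphism of metrized complexes, and that theorem lifts it directly to a finite morphism of smooth curves over a valued field with the prescribed skeleton. You instead smooth algebraically, by viewing the glued nodal cover as a point of the boundary stratum of $\Hbar_{g,d}(\vec\mu)$ and using the structure of its versal deformation space --- which is precisely the computation the paper carries out in Section~\ref{sec: def2} for other purposes: the normalized deformation space is smooth with coordinates $\zeta_i$ mapping to the target node-smoothing parameters via $\xi_i = \zeta_i^{L_i}$, so choosing a point over a valued field with $\val(\xi_i)$ equal to the prescribed edge length of $e_i$ produces the required family. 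Both routes work; the paper's is shorter but imports an external analytic lifting theorem, while yours is self-contained relative to the deformation theory already developed in the paper and makes visible exactly where the compatibility of ramification indices at nodes (encoded in $\Theta$ and harmonicity) is used. The only place where your write-up is softer than it needs to be is the phrase about ``the freedom in choosing which of the finitely many covers\dots is enough'': no such freedom is actually needed, since gluing smooth points of the local covers $D_{\widetilde v}$ according to the edges of $\Gamma_{src}$ is unobstructed once the ramification indices on the two branches of each node agree, which $\Theta$ guarantees.
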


\begin{proof}
It is clear that there exists an admissible cover over $\CC$ whose dual graph has combinatorial type $\Theta$ precisely when $H(v)\neq 0$. The rest follows from the simultaneous smoothing theorem for metrized complexes in~\cite[Theorem B]{ABBR}. 
\end{proof}

Thus, if all ramification is marked, and the 
target graph is trivalent and totally degenerate, these local Hurwitz numbers encode the number of ways in which a morphism of metric graphs may be promoted to a morphism of nodal curves (or metrized complexes).

\subsection{Functorial tropicalization for the stack of admissible covers}
In~\cite{ACP}, it is shown that there is a generalized extended cone complex that is functorially associated to any toroidal compactification of a Deligne--Mumford stack, which lives as a retract of the Berkovich analytification. To describe the construction in our specific case, we first recall some facts about the deformation theory of admissible covers. Along the way we gather facts which will be useful in studying the tautological maps on $\Hbar^{trop}_{g\to h,d}(\vec\mu)$. 

\subsubsection{Deformation spaces I: An example}\label{sec: def1}

We analyze the completed local rings of points of the  moduli space $\Hbar_{g\to h, d}(\vec\mu)$
by explicitly normalizing the local rings of the Harris--Mumford stack $\HMbar_{g\to h,d}(\vec\mu)$.

To aid the reader, we begin with a toy example that illustrates the key features of the general case. Let $[D\to C]$ be an admissible cover in $\HMbar_{g\to h,d}(\vec\mu)$. Assume that $[D\to C]$ has no automorphisms. Let $z$ be a node of $C$ and  assume that there are two nodes $\widetilde z_1,\widetilde z_2$ above $z$, with ramification $2$ and $3$ respectively. Let $\xi$ be the deformation parameter of the node $z$, and let $\widetilde \xi_1$ and $\widetilde \xi_2$ be the deformation parameters of the nodes $\widetilde z_1,\widetilde z_2$. The situation is depicted in terms of dual graphs in Figure~\ref{fig: def-pic}.

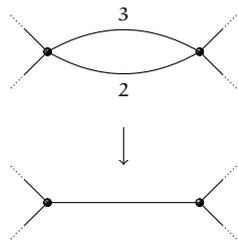
\begin{figure}[tb]
\begin{tikzpicture}
\draw [ball color=black] (0,0) circle (0.5mm);
\draw [ball color=black] (2,0) circle (0.5mm);
\path (0,0) edge [bend left] (2,0);
\path (0,0) edge [bend right] (2,0);
\draw node at (1,0.5) {\tiny$3$};
\draw node at (1,-0.5) {\tiny$2$};

\draw [ball color=black] (0,-2) circle (0.5mm);
\draw [ball color=black] (2,-2) circle (0.5mm);
\draw (0,-2)--(2,-2);

\draw (0,0)--(-0.3,0.3);
\draw [densely dotted] (-0.3,0.3)--(-0.5,0.5);
\draw (0,0)--(-0.3,-0.3);
\draw [densely dotted] (-0.3,-0.3)--(-0.5,-0.5);
\draw (2,0)--(2.3,0.3);
\draw [densely dotted] (2.3,0.3)--(2.5,0.5);
\draw (2,0)--(2.3,-0.3);
\draw [densely dotted] (2.3,-0.3)--(2.5,-0.5);

\draw (0,-2)--(-0.3,-2.3);
\draw (0,-2)--(-0.3,-1.7);
\draw [densely dotted] (-0.3,-2.3)--(-0.5,-2.5);
\draw [densely dotted] (-0.3,-1.7)--(-0.5,-1.5);
\draw (2,-2)--(2.3,-2.3);
\draw (2,-2)--(2.3,-1.7);
\draw [densely dotted] (2.3,-2.3)--(2.5,-2.5);
\draw [densely dotted] (2.3,-1.7)--(2.5,-1.5);

\draw [->] (1,-1)--(1,-1.5);
\end{tikzpicture}
\caption{The dual graph of the local picture at a node with ramification profile $(2,3)$ over it.}
\label{fig: def-pic}
\end{figure}

As we deform $z$, we need to deform $\widetilde z_1$ and $\widetilde z_2$ in accordance with the ramification profiles. Thus, the coordinate ring of the versal deformation space is 
\[
\CC\llbracket \xi,\widetilde \xi_1, \widetilde \xi_2\rrbracket/(\xi-\widetilde \xi_1^2,\xi-\widetilde \xi_2^3)\cong \CC\llbracket\widetilde \xi_1,\widetilde \xi_2\rrbracket/(\widetilde \xi_1^2-\widetilde \xi_2^3). 
\]

This is the completed local ring of a cuspidal cubic, at the cusp, and it is not integrally closed. Its integral closure is given by $\CC\llbracket\zeta\rrbracket$. The normalization map is given by $\widetilde \xi_1\mapsto \zeta^3$ and $\widetilde \xi_2\mapsto \zeta^2$. Thus, there is a unique point in the normalization $\Hbar_{g\to h,d}(\vec\mu)$ lying above $[D\to C]$. In particular, notice that $\xi\mapsto \zeta^6$. 

If we replace the ramification index $3$ above with $4$, then the completed local ring is the completed local ring at $(0,0)$ of two parabolas meeting at the origin in $\mathbb A^2$. That is, the completed local ring at $(0,0)$ of the affine place curve $V(\widetilde \xi_1^4-\widetilde \xi_2^2)$. In particular, notice that these completed local rings not only fail to be integrally closed, but even fail to be an integral domain. In this case the normalization is $\CC\llbracket\zeta\rrbracket\times \CC\llbracket\zeta\rrbracket$, and $\xi\mapsto (\zeta^4,\zeta^4)$. 

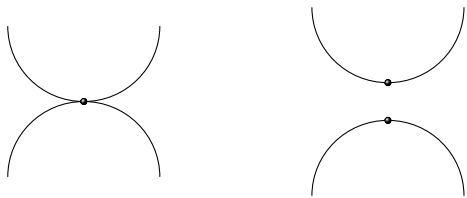
\begin{figure}[h!]
\begin{tikzpicture}
\draw (0,1) arc [radius = 1, start angle = 180, end angle = 360];
\draw (0,-1) arc [radius = 1, start angle = 180, end angle = 0];
\draw [ball color=black] (1,0) circle (0.4mm);

\begin{scope}[shift = {(4,0)}]
\draw (0,1.25) arc [radius = 1, start angle = 180, end angle = 360];
\draw (0,-1.25) arc [radius = 1, start angle = 180, end angle = 0];
\draw [ball color=black] (1,.25) circle (0.4mm);
\draw [ball color=black] (1,-.25) circle (0.4mm);
\end{scope}
\end{tikzpicture}
\label{fig: normalization}
\caption{The completed local ring of $\HMbar_{g,d}(\vec\mu)$ and its normalization for the $(2,4)$ ramification discussed above.}
\end{figure}

\subsubsection{Deformation spaces II: The general case} \label{sec: def2} We now tackle the general case, which is essentially the same as the example above, with certain clerical difficulties.

Consider a point $[\pi: D\to C]\in \HMbar_{g\to h,d}(\vec\mu)$ and let $[\Gamma_{src}\to\Gamma_{tgt}]$ be the dual combinatorial type of tropical admissible covers. Let $z_1,\ldots,z_k$ be the nodes of $C$, and let $\pi^{-1}(z_i) = \{z_{i,1},\ldots,z_{i,r(i)}\}$ be the preimages of the $i$th node. Assume that the ramification at $z_{i,j}$ is given by $p(i,j)$. Again assume  that $[D\to C]$ has no automorphisms. As argued in~\cite{HM82}, the completed local ring of $\HMbar_{g\to h,d}(\vec\mu)$ is given by 
\[
\widehat\cO_{[D\to C]} = \CC\llbracket\xi_1,\ldots,\xi_{N},\xi_{i,j}\rrbracket/(\xi_{i,j}^{p(i,j)}-\xi_i).
\]
Here, the first $k$ $\xi_i$ correspond to deformation parameters of the nodes of $C$, and the remaining are the parameters for deformations of complex structure on $C$. Our task is to compute the number of branches of the normalization of $\widehat\cO_{[D\to C]}$. 

Observe that the equations $(\xi_{i,j}^{p(i,j)}-\xi_i)$ involve the deformation parameters of a node $z_i$ of the target curve $C$ and those of the nodes of $D$ mapping to $z_i$, so may work node by node. 

In other words, the number of branches in the normalization of $\widehat\cO_{[D\to C]}$ is the product over nodes $z_i$ of $C$ of the number of branches in the normalization of
\[
\CC\llbracket \xi_i, \xi_{i,j}\rrbracket/(\xi_{i,j}^{p(i,j)}-\xi_i).
\]

To reduce the burden of notation, we drop subscripts: let $z$ be a node of $C$, and $\widetilde z_1,\ldots, \widetilde z_r$ be the nodes of $D$ mapping to $C$. Assume that the ramification order at $z_j$ is given by $p(j)$. Consider the ring
\[
R = \CC\llbracket \xi, \widetilde \xi_1,\ldots, \widetilde \xi_r \rrbracket/(\xi = \widetilde \xi_1^{p(1)} = \cdots =  \widetilde \xi_r^{p(r)}).
\]
The scheme $\spec(R)$ is a singular curve, and the normalization $\pi: \spec(\widetilde R)\to \spec(R)$ consists of a disjoint union of a number of copies of $\spec(\CC\llbracket s\rrbracket)$. The number of such copies is precisely the number of analytic branches we wish to compute. Let $L$ be the LCM of the orders of ramification $p(j)$ taken over the nodes $\widetilde z_j$ mapping to $z$. Upon restriction to an irreducible component of $\spec(\widetilde R)$, $\pi$ must be dual to a map of rings
\[
\pi^\#: \CC\llbracket \xi, \widetilde \xi_1,\ldots, \widetilde \xi_r \rrbracket/(\xi = \widetilde \xi_1^{p(1)} = \cdots =  \widetilde \xi_r^{p(r)})\to \CC\llbracket s\rrbracket,
\]
such that $\pi^\#(\xi) = s^L$. In order to satisfy the equations amongst the parameters $\widetilde \xi_j^{p(j)}$ we are forced to set $\pi^\#(\widetilde \xi_j) = \omega \cdot s^{L/p(j)}$, where $\omega$ is a $p(j)^{\mathrm{th}}$ root of unity. For each given node there are $p(j)$ such choices of roots of unity. Furthermore, there is a simultaneous scaling of the coordinates $\xi$ and $\widetilde \xi_j$ by a choice of $L^{\mathrm{th}}$ root of unity, so the total number of isomorphism classes of maps $\pi^\#$ satisfying $\pi^\#(\xi) = s^L$ is precisely $\prod_j p(j)/L$. Recombining these computations over all nodes of $C$ we see that the normalization of $\cO_{[D\to C]}$ is
\[
\prod_{i=1}^M \CC\llbracket\zeta_1,\ldots,\zeta_{N}\rrbracket,
\]
where $M = \prod_{e\in E(\Gamma_{tgt})} M_e$, and $M_e$ is the product of the ramification indices above the node corresponding to $e$, divided by their LCM. This gives us the desired explicit formula for how the deformation parameters for the nodes of $C$ pull back to the deformation parameters of the corresponding node in $[D\to C]$. 

A main technical result of~\cite{ACP} is that the skeleton of a toroidal Deligne--Mumford stack decomposes as a disjoint union of extended open cones corresponding to each stratum of the stack, modulo the appropriate monodromy. 

Recall from Definition~\ref{def: proj-to-skeleton} that a point of the skeleton $\Sigma(\Hbar^{an}_{g,d}(\vec\mu))$ is a point in the image of the retraction map $\bm p_{\Hbar}:\cH^{an}_{g,d}(\vec\mu)\to \cH^{an}_{g,d}(\vec\mu)$. Thus, given a point of the skeleton, represented by an admissible cover $[D\to C]$ over a valued extension field of $\CC$, we obtain, by taking dual graphs of the special fiber, a point $[\Gamma_D\to\Gamma_C]$ of the space $\cH^{trop}_{g,d}(\vec\mu)$. 

\begin{definition}\label{def: trop-sigma}
The map $trop_\Sigma: \Sigma(\Hbar^{an}_{g,d}(\vec\mu)) \to \cH^{trop}_{g,d}(\vec\mu)$ is defined to be the restriction of $trop$ to the skeleton.
As above, it sends an admissible cover $[D\to C]$ to the tropical cover $[\Gamma_D\to \Gamma_C]$.
\end{definition}

\subsubsection{Proof of Theorem~\ref{thm: theorem1}} \

Let $p\in \Hbar_{g,d}(\vec\mu)$ be a point contained in a locally closed stratum $W$ corresponding to a cover $[D\to C]$. Assume that the generic point of $W$ parametrizes a cover with combinatorial type $\Theta$. The point $p$ has an \'etale neighborhood $V\to \Hbar_{g,d}(\vec\mu)$, such that the locus parametrizing deformations of $[D\to C]$ where the $i^{\mathrm{th}}$ node of $C$ persists is a smooth and irreducible principal divisor cut out by a function $f_i$. Ranging over the nodes of $C$, we obtain a collection of monomials $f_1,\ldots, f_k$ in bijection with the nodes of $C$. The skeleton of $V^{\beth}$ is a copy of the extended cone $\overline \sigma_\Theta$ and the retraction
\[
V^\beth\to \overline\sigma_\Theta
\] 
is given by sending a valuation $\nu$ to the tuple $(\nu(f_1),\ldots, \nu(f_k))$. Upon application of~\cite[Proposition 6.2.6]{ACP}, the image of $\overline \sigma_\Theta$ in $\overline \Sigma(\Hbar^{an}_{g,d}(\vec\mu))$ is the quotient of the relative interior $\overline \sigma_\Theta^\circ$ of $\overline \sigma_\Theta$ by $Aut(\Theta)$. As a consequence we obtain a decomposition of the skeleton
\[
\overline \Sigma(\Hbar_{g,d}(\vec\mu)) = \bigsqcup_W \overline \sigma_\Theta^\circ/Aut(\Theta),
\]
where the disjoint union is taken over the locally closed strata $W$ of $\Hbar_{g,d}(\vec\mu)$. Note that unlike the case of $\Mbar_{g,n}$, there are multiple zero-dimensional strata of $\Hbar_{g,d}(\vec\mu)$ that have the same underlying combinatorial type.

The tropical moduli space similarly decomposes as
\[
\overline \Sigma(\Hbar_{g,d}(\vec\mu)) = \bigsqcup_\Theta \overline \sigma_\Theta^\circ/Aut(\Theta),
\] 
and thus we obtain a well-defined map of generalized extended cone complexes
\[
\Phi: \overline \Sigma(\Hbar_{g,d}(\vec\mu)) \to \Hbar^{trop}_{g,d}(\vec\mu).
\]
Given a locally closed stratum $W$ whose generic point parametrizes a combinatorial type $\Theta$, the map $\Phi$ simply identifies the generalized cone associated to $W$ with the generalized cone in $\Hbar^{trop}_{g,d}(\vec\mu)$ associated to $\Theta$.
The map $\Phi$ is manifestly continuous and surjective, and an isomorphism upon restriction to any generalized cone of the skeleton. We claim that this map $\Phi$ coincides with the map $trop_\Sigma$. To see this, choose a point $p'$ of $\Hbar^{an}_{g,d}(\vec\mu)$ such that the reduction of $p'$ is $p$. Locally, such a point gives rise to a valuation
\[
\nu: \CC[V]\to \RR\sqcup \infty,
\]
and $trop_\Sigma$ is defined by evaluating $\nu $ at the monomials $f_i$. On the other hand, observe that the monomials $f_1,\ldots, f_k$ are precisely the deformation parameters of the universal base curve in an \'etale neighborhood $V$ of $p$, and their valuations give rise to the edge lengths on the dual graph of $C$ that define the map $trop$. As a result the map $\Phi$ coincides with the restriction of $trop$ to the skeleton, so we conclude that $\Phi = trop_\Sigma$. We have obtain the desired factorization

\[
\begin{tikzcd}
\Hbar^{an}_{g\to h, d}(\vec \mu) \arrow{rr}{trop} \arrow{dr}[swap]{\bm p_H} & &  \Hbar^{trop}_{g\to h,d}(\vec \mu) \\
& \overline\Sigma(\Hbar^{an}_{g\to h, d}(\vec \mu)) \arrow{ur}[swap]{trop_{\Sigma}}. &
\end{tikzcd}
\]

\qed \\

\subsubsection{The map from the skeleton to the tropical moduli space}~\label{sec: trop-sigma} The skeleton $\overline\Sigma(\Mbar^{an}_{g,n})$ is a subset of the analytic space $\Mbar^{an}_{g,n}$ and one may restrict the pointwise tropicalization map to this subset to obtain a map $\overline{\Sigma}(\Mbar_{g,n}^{an})\to \Mbar_{g,n}^{trop}$. One way in which to view~\cite[Theorem 1.2.1]{ACP} is that this natural map is an isomorphism. This is not the case for admissible covers. The map 
\[
trop_\Sigma:{\Sigma}(\Hbar^{an}_{g,d}(\vec\mu))\to \Hbar^{trop}_{g,d}(\vec\mu),
\]
does not distinguish between strata which are ``combinatorially indistinguishable''. Said more precisely, there are multiple zero dimensional strata in $\Hbar_{g,d}(\vec\mu)$ corresponding to the same combinatorial type $\Theta = [\Gamma_{src}\to\Gamma_{tgt}]$. However, this loss of information can be completely characterized. Distinct generalized cones in the skeleton become identified in $\Hbar^{trop}_{g,d}(\vec\mu)$ for the following reasons. 
\begin{enumerate}[(A)]
\item There are multiple lifts of a trivalent, totally degenerate combinatorial type $\Theta$ to an admissible cover of nodal curves $[C\to D]$,
\item A nodal admissible cover $[D\to C]$ is really an element of the Harris--Mumford moduli space $\HMbar_{g,d}(\vec\mu)$. As discussed in Section~\ref{sec: def1}, there are multiple points in the normalization $\Hbar_{g,d}(\vec\mu)$ that lie above $[D\to C]$. As we have seen in Section~\ref{sec: def1} , there are precisely $M$ such points, where $M = \prod_{e\in E(\Gamma_{tgt})} M_e$, and $M_e$ is the product of the ramification indices above the node corresponding to the edge $e$, divided by their LCM. 
\end{enumerate}

These explain the weights on the tropical moduli space described in Section~\ref{sec: contractions}. The product of the local Hurwitz numbers  from (A) above gives (W2). The weight (W3) on $\cH^{trop}_{g\to h,d}(\vec\mu)$ is an artifact of (B) above. In the following section, we will see both (A) and (B) as a correction to the dilation factor $d_\Theta(br^{trop})$ referred to in Theorem~\ref{thm:branchdegree}.

\begin{remark}
The tropical moduli space $\Hbar^{trop}_{g,d}(\vec \mu)$ is naturally a generalized extended cone complex, but as we have seen the map from the skeleton of the classical space collapses several cones onto one. One may ask if there is a natural space whose analytification admits a tropicalization map to $\cH^{trop}_{g,d}(\vec \mu)$. The discussion of deformation spaces in Section~\ref{sec: def2} implies that the space $\HMbar_{g\to h,d}(\vec\mu)$ is locally analytically isomorphic to a \textit{possibly non-normal} toric variety and hence has the structure of a logarithmic scheme that is fine but possibly not \textit{saturated}. Though tropicalization has not been explicitly studied in this setting, following~\cite{U13}, one expects a continuous map from $\HMbar^{an}_{g\to h,d}(\vec\mu)$ to the $\RR_{\geq 0}\cup \infty$ valued points of the Kato fan of $\HMbar_{g\to h,d}(\vec\mu)$, and that this extended cone complex essentially coincides with $\Hbar^{trop}_{g,d}(\vec \mu)$. 
\end{remark}

\section{Classical and tropical tautological maps}\label{sec: taut-maps}

\subsection{The  branch maps} The classical admissible cover space $\Hbar_{g,d}(\vec \mu)$ admits a branch map, recording the base curve, marked at its branch points:
\begin{eqnarray*}
br:\overline{\mathcal H}_{g,d}(\vec\mu)&\to& \Mbar_{h,r+s} \\
{[D\to C]}&\mapsto& [(C,p_1,\ldots, p_r,q_1\ldots, q_s)].
\end{eqnarray*}
The degree of this map is the corresponding Hurwitz number. Intuitively, fixing a point $p$ in $\Mbar_{h,r+s}$, the cardinality of the fiber over $p$ is the number of covers having given discrete data, $(d,g,h,\vec\mu)$ over the specific pointed stable curve $C_p$. 

The tropical branch map is defined similarly, taking values in the space of pointed tropical $(r+s)$--pointed curves.
\begin{eqnarray*}
br^{trop}:\overline{\mathcal H}^{trop}_{g,d}(\vec\mu)&\to& \Mbar^{trop}_{h,r+s} \\
{[\Gamma_{src}\to \Gamma_{tgt}]}&\mapsto& [\Gamma_{tgt}].
\end{eqnarray*}

\subsection{Tropicalizing the branch map}

We want to compare the classical and tropical branch maps in a functorial manner. The process of taking skeletons for toroidal Deligne--Mumford stacks is compatible with toroidal and subtoroidal morphisms. Recall that a toroidal morphism $\phi:\mathscr X\to \mathscr Y$ is a morphism such that for every point $x\in \mathscr X$, there exist compatible \'etale toric charts around $x$ and $\phi(x)$, on which the morphism is given by a dominant equivariant map of toric varieties. We remark that although this is not pursued in~\cite{ACP}, we can impose a weaker condition on morphisms between toroidal embedding for which there is an induced map on skeleta. 

\begin{definition}\label{def: loc-an-toric}
A morphism $\varphi: \mathscr{X}\to \mathscr Y$ of toroidal embeddings is said to be \textit{locally analytically toric} if for every point $x\in \mathscr X$, there exist \'etale charts around $x$ and $\phi(x)$ such $\phi$ is given by an equivariant toric morphism. That is, we have the following diagram
\[
\begin{tikzcd}
\mathscr V_\sigma \arrow{d} & \mathscr V_x \arrow{d} \arrow{l} \arrow{r} & \mathscr X \arrow{d} \\
\mathscr V_\tau & \mathscr V_{\varphi(x)} \arrow{l}\arrow{r} & \mathscr Y,
\end{tikzcd}
\]
where the top row is a local toric chart around $x$ in $\mathscr X$, and the bottom row is a local toric chart around $\varphi(x)$ in $\mathscr Y$. The leftmost vertical arrow is torus equivariant. 
\end{definition}

It is easy to see that any morphism that is locally analytically toric induces a map on skeleta, and the statements for toroidal and subtoroidal morphisms in~\cite{ACP} go through without any substantial changes, see Remark 5.3.2 of loc. cit.

\begin{lemma}
The branch map $br:\overline{\mathcal H}_{g,d}(\vec\mu)\to \Mbar_{h,r+s}$ is locally analytically toric, where $\Mbar_{h,r+s}$ is given the toroidal structure from the inclusion $\cM_{h,r+s}\hookrightarrow \Mbar_{h,r+s}$. 
\end{lemma}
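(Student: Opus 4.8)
The claim is local on $\Hbar_{g,d}(\vec\mu)$, so fix a point $[\pi\colon D\to C]$ and let $\Theta = [\Gamma_{src}\to\Gamma_{tgt}]$ be its combinatorial type. The plan is to produce compatible \'etale (or formal) toric charts around $[\pi\colon D\to C]$ and around its image $[(C,p_1,\dots,p_r,q_1,\dots,q_s)]$ in $\Mbar_{h,r+s}$ on which $br$ is an equivariant toric morphism, in the sense of Definition~\ref{def: loc-an-toric}. For $\Mbar_{h,r+s}$, the standard toroidal structure gives a chart whose cone is $\sigma_{\Gamma_{tgt}} = (\RR_{\geq 0})^{E(\Gamma_{tgt})}$, with formal coordinates the smoothing parameters $f_1,\dots,f_k$ of the nodes $z_1,\dots,z_k$ of $C$ (together with the unobstructed deformation parameters of the marked smooth curve, which contribute a torus factor and are irrelevant to the toroidal structure). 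For $\Hbar_{g,d}(\vec\mu)$, I invoke the deformation-theoretic computation of Section~\ref{sec: def2}: after normalizing the Harris--Mumford local ring, the completed local ring of $\Hbar_{g,d}(\vec\mu)$ at $[\pi\colon D\to C]$ is $\CC\llbracket \zeta_1,\dots,\zeta_N\rrbracket$, and the coordinates $\zeta_e$ indexed by the edges $e\in E(\Gamma_{tgt})$ can be taken so that the smoothing parameter of each node $z_i$ of $C$ pulls back to $\zeta_i^{L_i}$, where $L_i$ is the LCM of the ramification indices of $\pi$ over $z_i$. This exhibits a toric chart $V_{\sigma_\Theta}$ with $\sigma_\Theta = (\RR_{\geq 0})^{E(\Gamma_{tgt})}$.

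First I would spell out that, on these charts, $br$ is the monomial map $f_i \mapsto \zeta_i^{L_i}$ (times identity on the torus factors coming from smooth deformations of source and target). This is precisely an equivariant morphism of toric varieties $V_{\sigma_\Theta}\to V_{\sigma_{\Gamma_{tgt}}}$ associated to the linear map of cocharacter lattices $\ZZ^{E(\Gamma_{tgt})}\to \ZZ^{E(\Gamma_{tgt})}$ that scales the $i$-th coordinate by $L_i$; this map is injective with finite cokernel, hence the toric morphism is dominant (indeed finite onto its image up to the torus factor), so $br$ is in fact toroidal, not merely locally analytically toric. Second, I would check the compatibility of these charts with the \'etale neighborhoods $\mathscr V_x\to\mathscr X$ and $\mathscr V_{\varphi(x)}\to\mathscr Y$ of Definition~\ref{def: loc-an-toric} — i.e. that one can realize the formal picture \'etale-locally. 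This follows because the smoothing parameters of nodes are honest \'etale coordinates on suitable neighborhoods of $\Mbar_{h,r+s}$ (the boundary is normal crossings), and on $\Hbar_{g,d}(\vec\mu)$ the normalization of the Harris--Mumford chart, which is smooth (Section~\ref{sec: def1}), provides an \'etale neighborhood carrying the coordinates $\zeta_e$; the map $br$ factors through these by construction of the normalization. Finally I would note that away from a stratum (i.e. on open parts where fewer nodes are present) the same description holds with a subset of the $\zeta_e$, and that the charts glue compatibly under the stratification, so the local conclusion is enough.

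The one subtlety — and the step I expect to cost the most care — is the \emph{automorphisms}. When $[\pi\colon D\to C]$ has a nontrivial automorphism group $\Aut(\pi)$, the chart on $\Hbar_{g,d}(\vec\mu)$ must be taken $\Aut(\pi)$-equivariantly, and similarly the image point in $\Mbar_{h,r+s}$ may have automorphisms acting on $C$. One must check that the induced action of $\Aut(\pi)$ on $\sigma_\Theta$ (permuting edges of $\Gamma_{tgt}$, i.e. the $\zeta_e$) is compatible with its action on $\sigma_{\Gamma_{tgt}}$ via $br$, so that the toric chart descends to the stack quotient and the morphism of \emph{stacky} toric charts remains equivariant; this is where the subgroup $\Aut_0(\Theta)$ acting trivially on the base cone, and the ``ghost'' $(W3)$ factors, enter, and one must verify that none of this obstructs the equivariant-toric structure on $br$ itself — only its dilation factor. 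Once the automorphism bookkeeping is in place, the lemma follows, and as a by-product one reads off that the induced map of cones $\sigma_\Theta\to\sigma_{\Gamma_{tgt}}$ is the diagonal scaling by the $L_i$, whose dilation factor will feed into Theorem~\ref{thm:branchdegree}.
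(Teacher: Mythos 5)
Your proposal is correct and takes essentially the same route as the paper: the paper's proof also reduces the claim to checking that $br$ pulls back monomials to monomials in local analytic coordinates, and cites the deformation-theoretic computation of Section~\ref{sec: def2} (the smoothing parameter of a node of $C$ pulling back to $\zeta^{L}$ with $L$ the LCM of the ramification indices over that node) as the reason this holds. Your extra observations — that the resulting monomial map is dominant, so $br$ is in fact toroidal, and that the automorphism bookkeeping only affects the dilation factor and not the equivariant-toric structure — are consistent with what the paper states in the introduction and uses later in the proof of Theorem~\ref{thm:branchdegree}.
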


\textit{Proof.} To prove this, we need to find compatible local toric models for the points $x\in \Hbar_{g,d}(\vec\mu)$ and $br(x)\in \Mbar_{h,r+s}$, such that $br$ is given by an equivariant locally analytic toric morphism. It is sufficient to check that locally analytically $br$ pulls back monomials to monomials. However, this is clear from the discussion of deformation theory in Section~\ref{sec: def2}: deformations of an admissible cover being controlled by deformations of the target curve amounts precisely to saying that the local affine models around a boundary point are identified via the branch map.
\qed \\

Identifying the skeleton of $\Mbar^{an}_{h,r+s}$ with the tropical moduli space $\Mbar^{trop}_{h,r+s}$, we immediately have the following consequence.

\begin{corollary}
The analytified branch map $br^{an}:\Hbar^{an}_{g\to h,d}(\vec\mu)\to\Mbar^{an}_{h,r+s}$ induces a map on skeleta, $\overline{\Sigma}(\Hbar^{an}_{g\to h,d}(\vec\mu))\to \Mbar^{trop}_{h,r+S}$.
\end{corollary}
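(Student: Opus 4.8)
The plan is to derive the corollary directly from the preceding Lemma together with the functoriality of the ACP skeleton construction for locally analytically toric morphisms, so that the machinery does essentially all the work. First I would record what the Lemma gives us: for each point $x$ of $\Hbar_{g,d}(\vec\mu)$ there are compatible \'etale charts, a local toric model $\mathscr V_\sigma$ around $x$ and a local toric model $\mathscr V_\tau$ around $br(x)\in\Mbar_{h,r+s}$ (for the boundary toroidal structure on $\Mbar_{h,r+s}$), together with a torus-equivariant morphism $\mathscr V_\sigma\to\mathscr V_\tau$ of toric varieties through which $br$ factors. Concretely, the deformation-theoretic computation of Section~\ref{sec: def2} shows that the deformation parameters $f_1,\dots,f_k$ of the nodes of the base curve $C$ occur among the coordinates on $\mathscr V_\sigma$, and $br$ simply pulls the analogous coordinates on $\mathscr V_\tau$ back to these; thus the local model is a monomial (indeed dominant) map of cones $\sigma\to\tau$.

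Next I would apply the functoriality statement recorded after Definition~\ref{def: loc-an-toric} (the mild extension of~\cite{ACP}, cf.\ Remark 5.3.2 of loc.\ cit.): passing to Thuillier's analytic generic fibers, each equivariant toric chart map $\mathscr V_\sigma\to\mathscr V_\tau$ is compatible with the canonical retractions to the associated cones, hence descends to a map $\overline\sigma\to\overline\tau$ of extended cones fitting into a commuting square with $br^\beth$ and the retractions $\bm p_{\Hbar}$, $\bm p_{\Mbar}$. Since these local cone maps agree on overlaps and are equivariant for the relevant monodromy/automorphism groups, they glue to a continuous morphism of generalized extended cone complexes
\[
\overline\Sigma(\Hbar^{an}_{g\to h,d}(\vec\mu))\longrightarrow\overline\Sigma(\Mbar^{an}_{h,r+s}),
\]
characterized by commutativity with $br^{an}$ and the two retraction maps. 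The unbounded case is identical, working throughout with the extended cones $\overline\sigma$. Finally I would invoke~\cite[Theorem 1.2.1]{ACP}, which identifies $\overline\Sigma(\Mbar^{an}_{h,r+s})$ with $\Mbar^{trop}_{h,r+s}$; composing with this identification produces the desired map $\overline\Sigma(\Hbar^{an}_{g\to h,d}(\vec\mu))\to\Mbar^{trop}_{h,r+s}$.

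I do not anticipate a genuine obstacle: all the substantive content sits in the preceding Lemma (that $br$ pulls monomials back to monomials, which is precisely the assertion that deformations of an admissible cover are governed by deformations of its target) and in the already-available ACP formalism. The one point requiring a little care — and the closest thing to an obstacle — is checking that the local toric charts can be chosen compatibly enough that the induced maps of cones descend through the automorphism/monodromy quotients used in gluing the generalized cone complex; this is handled exactly as in the proof of Theorem~\ref{thm: theorem1}, via~\cite[Proposition 6.2.6]{ACP}.
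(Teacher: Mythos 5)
Your proposal is correct and follows essentially the same route as the paper: the preceding Lemma supplies the local toric models in which $br$ pulls back monomials to monomials, this induces a map on each cone of the skeleton via the ACP/Thuillier functoriality for locally analytically toric morphisms, and the local maps glue (the paper dispatches this gluing as "straightforward," while you note it is handled as in the proof of Theorem~\ref{thm: theorem1}). The extra detail you give about the identification $\overline{\Sigma}(\Mbar^{an}_{h,r+s})\cong\Mbar^{trop}_{h,r+s}$ is consistent with how the paper uses this corollary immediately afterwards.
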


\textit{Proof.} Locally analytically, monomials are pulled back to monomials. Thus, it follows that there is an induced map on each cone of $\overline{\Sigma}(\Hbar^{an}_{g\to h,d}(\vec\mu))$ to the skeleton $\Mbar^{trop}_{h,r+S}$. The fact that these maps glue to give a global map is straightforward. \qed \\

\noindent
\subsubsection{Proof of Theorem \ref{thm: theorem2}, part one: branch map}~\label{thm2partI} It is clear from the description of the abstract tropicalization map for covers and for curves, that the tropical and classical branch maps fit together in a commutative diagram:
\[
\begin{tikzcd}
\Hbar_{g,d}^{an}(\vec \mu) \arrow{r}{{trop}}\arrow[swap]{d}{br^{an}} & \Hbar^{trop}_{g,d}(\vec \mu) \arrow{d}{br^{trop}}\\
\Mbar^{an}_{h,r+s} \arrow[swap]{r}{{trop}} & \Mbar^{trop}_{h,r+s} \\
\end{tikzcd}
\]

We recall from~\cite[Section 6]{ACP}, taking skeletons is functorial for toroidal morphisms. We thus also obtain the following more detailed commutative diagram. The commutativity of the left square follows by functoriality of taking skeletons. We use~\cite[Theorem 1.2.1]{ACP}  to identify the skeleton of $\Mbar_{h,r+s}$ with the tropical moduli space $\Mbar^{trop}_{h,r+s}$. Theorem~\ref{thm: theorem1} asserts that $trop_\Sigma$ is an isomorphism on each cone. Thus we get an extension of left square to the full diagram below.

\[
\begin{tikzcd}
\Hbar^{an}_{g,d}(\vec \mu) \arrow{r}{\mathbf{p}} \arrow[swap]{d}{br^{an}} \arrow[bend left]{rr}{trop} & \overline{\Sigma} (\Hbar_{g,d}(\vec \mu)) \arrow{d}{\overline{\Sigma}(br)} \arrow{r}{trop_\Sigma}& \Hbar_{g,d}^{trop}(\vec \mu)\arrow[densely dotted]{d}\\
\Mbar^{an}_{h,r+s}  \arrow{r}[swap]{\mathbf{p}} \arrow[bend right,swap]{rr}{trop}& \overline{\Sigma}(\Mbar_{h,r+s}) \arrow{r}[swap]{\psi}& \Mbar_{h,r+s}^{trop}
\end{tikzcd}
\]

The tropical branch map $br^{trop}$ fills in the far right vertical arrow, making the entire diagram commute, since $\psi$ is an isomorphism, and $trop_\Sigma$ is an isomorphism when restricted to any cone. It follows that the tropical branch map is indeed the tropicalization of the classical branch map, as desired. \qed

\subsection{The  source maps} The ``classical''  source map  takes a cover $[D \to C]$ to its source curve $[D]\in \Mbar_{g,n}$ where $n$ is the number of smooth ramification points, which is equal to the sum of the lengths of the partitions $\mu^i$. Similarly, there is a  tropical source map taking $[\Gamma_{src}\to\Gamma_{tgt}]$ to the source graph $\Gamma_{src}$. We wish to show that the tropical source map is naturally identified with the tropicalization of the analytified source map.

\subsubsection{Proof of Theorem \ref{thm: theorem2}, part two: source map} We want to show that in an \'etale neighborhood of a point $x\in \Hbar_{g,d}(\vec \mu)$, the map $src$ is given by a toric morphism. Let $x=[D\to C]$, and let $[\Gamma_{src}\to \Gamma_{tgt}]$ be its (unmetrized) dual graph. In $\Hbar_{g,d}(\vec \mu)$, the local monomial coordinates are given by the deformation parameters of the nodes of $C$. Let us focus on a single node $p\in C$ and on $\widetilde p_1,\ldots,\widetilde p_m$ the nodes of $D$ mapping to $p$. The nodes 
$\widetilde p_i$
can be independently deformed, and their deformation parameters  $\xi_1,\ldots, \xi_m$ are the local monomial coordinates on $\Mbar_{g,n}$.
The deformations of $[D]$ which come from deformations of the map $[D\to C]$  satisfy the relations
\[
\xi_1^{w_1} = \xi_2^{w_2} = \cdots = \xi_m^{w_m},
\]

where $w_i$ is the ramification on the node of $[D]$ corresponding to $\widetilde p_i$. Thus, locally analytically, $src$ maps $\Hbar_{g,d}(\vec\mu)$ via a toric morphism. This map induces a map on skeleta. 

To metrize $[\Gamma_{src}\to \Gamma_{tgt}]$ the above equations yield length conditions on edges $\widetilde e_i$ of $\Gamma_{src}$ mapping to a fixed edge $e$ of $\Gamma_{tgt}$:
\[
w_1 \ell(\widetilde e_1) = w_2\ell(\widetilde e_2) = \cdots = w_m\ell(\widetilde e_m),
\]
where $w_i$ is the expansion factors along $\widetilde e_i$. This collection of linear conditions cuts out a subcone of $\Mbar^{trop}_{g,n}$. The result now follows from similar arguments to Section~\ref{thm2partI}.
\qed

\subsection{The degree of the branch map} 
In this section we prove Theorem~\ref{thm:branchdegree}, by extracting the degree of the branch morphism (the Hurwitz number) from the associated map on skeleta. We can  compute this degree over any point of the moduli space $\Mbar_{h,r+s}$. Loosely speaking, the proof proceeds by first choosing a maximally degenerate base curve corresponding to a zero-stratum of the moduli space of targets. The branch map is given \'etale locally by a locally toric morphism, whose degree can be computed in local analytic monomial coordinates. Since both the source and target of the branch maps are stacks, care is needed to make this precise. 

%
%

\begin{figure}[h!]
\begin{tikzpicture}
\matrix[column sep=0.7cm] {   
\begin{scope}[baseline]
\draw [ball color=black] (0,0) circle (0.3mm);
\draw [->] (0,0)--(2,0);
\draw [->] (0,0)--({sqrt(2)},{sqrt(2)});
\draw [->] (0,0)--(22.5:2.5);
\draw [dotted] ({sqrt(2)},{sqrt(2)})--(2,0);
\draw ({sqrt(2)},{sqrt(2)})--(22.5:2.5);
\draw (2,0)--(22.5:2.5);
\end{scope}
&
\begin{scope}[grow=right,baseline]
\draw node at (0,.75) {$\cdots$};
\end{scope}
&
\begin{scope}[grow=right,baseline]
\draw [ball color=black] (0,0) circle (0.3mm);
\draw [->] (0,0)--(2,0);
\draw [->] (0,0)--({sqrt(2)},{sqrt(2)});
\draw [->] (0,0)--(22.5:2.5);
\draw [dotted] ({sqrt(2)},{sqrt(2)})--(2,0);
\draw ({sqrt(2)},{sqrt(2)})--(22.5:2.5);
\draw (2,0)--(22.5:2.5);
\end{scope}
&
\begin{scope}[grow=right,baseline]
\draw node at (0,.75) {$\longrightarrow$};
\end{scope}
&
\begin{scope}[grow=right,baseline]
\draw [ball color=black] (0,0) circle (0.3mm);
\draw [->] (0,0)--(2,0);
\draw [->] (0,0)--({sqrt(2)},{sqrt(2)});
\draw [->] (0,0)--(22.5:2.5);
\draw [dotted] ({sqrt(2)},{sqrt(2)})--(2,0);
\draw ({sqrt(2)},{sqrt(2)})--(22.5:2.5);
\draw (2,0)--(22.5:2.5);
\end{scope}
\\};
\end{tikzpicture}
\caption{Cones in $\Sigma(\Hbar^{an}_{g,d}(\vec\mu))$ lying above a top dimension stratum in $\Mbar^{trop}_{h,r+s}$.}
\end{figure}

\subsubsection{Proof of Theorem~\ref{thm:branchdegree}} The proof proceeds in two steps. We first show that the degree of the branch map can be recovered from the maps on skeleta. We then proceed to show that with the weighting introduced in Section~\ref{sec: contractions}, the degree of the tropical branch map is equal to the degree of the map on skeleta. 

The branch morphism analytifies to a map $br^{an}:\Hbar_{g,d}^{an}(\vec\mu)\to \Mbar_{h,r+s}^{an}$ of analytic stacks, and we wish to compute the degree of this map. Let $\Gamma$ be a combinatorial graph of genus $h$, with $k = 3h-3+r+s$ internal edges. That is, $\Gamma$ is dual to a maximally degenerate curve $C$. This curve gives rise to a moduli map $\spec(\CC)\to \Mbar_{h,r+s}$ and hence a point $p$ of the stack. There is an \'etale neighborhood $U_\Gamma\to \Mbar_{h,r+s}$ of $p$ and an \'etale map $U_\Gamma\to \spec(\CC\llbracket \xi_1,\ldots, \xi_k\rrbracket)$. Note that the degree of the map $U_\Gamma\to \Mbar_{h,r+s}$ is precisely $|Aut(\Gamma)|$.  The open set $U_\Gamma$ is a versal deformation space for the curve dual to $\Gamma$, and $\xi_i$ is identified with the smoothing parameter for the node of $C$ corresponding to the $i^{\mathrm{th}}$ edge of $\Gamma$. The maximal dimensional cone $\sigma_{\Gamma}\cong \RR_{\geq 0}^k$ is canonically identified with the skeleton of the analytic space $U^\beth_\Gamma$. 

Choose an admissible cover $[D\to C]$, such that $C$ is dual to $\Gamma$. Fix an identification of the dual graph of $C$ with $\Gamma$. Let $\Theta$ be the associated combinatorial type for this cover and $\sigma_\Theta$ the moduli space of tropical admissible covers with type $\Theta$. The choice of identification of the nodes of $C$ with the edges of $\Gamma$ gives rise to a fiber diagram
\[
\begin{tikzcd}
U_\Theta \arrow[swap]{d}{\widetilde{br}_\Theta}\arrow{r} & \Hbar_{g,d}(\vec\mu) \arrow{d}{br} \\
U_\Gamma \arrow{r} & \Mbar_{h,r+s}.
\end{tikzcd}
\]
Here $U_\Theta$ is an \'etale neighborhood of $[D\to C]$. Note that the local coordinates given by $U_\Gamma$ are precisely the deformation parameters for the nodes of $C$, and thus give rise to local coordinates on $U_\Theta$. Since $U_\Gamma$ parametrizes singular curves with marked nodes, the only possible automorphisms on covers parametrized by $U_\Theta$ are those which act trivially on the base. That is, $U_\Theta$ is isomorphic to the stack quotient $[\A^k/Aut_0(\Theta)]$, where $Aut_0(\Theta)$ acts trivially on $\A^k$. Recall that $Aut_0(\Theta)$ consists of automorphisms of the cover $\Theta$ that lift the identity on $\Gamma$. After taking a further cover of $U_\Theta$ to account for this stabilizer, we see that the local degree of the map of affinoid domains
\[
\widetilde{br}^\beth_\Theta: U^\beth_\Theta\to U^\beth_\Gamma
\]
is given by $\frac{1}{|Aut_0(\Theta)|}$ times the degree of the map $\A_\beth^k\to \A_\beth^k$ induced by the map of cones $\sigma_\Theta\to \sigma_\Gamma$.

Consider the two projection maps $\bm p_H: U_\Theta^{\beth}\to \overline\sigma_\Theta$ and $\bm p_M: U^\beth_\Gamma\to \overline\sigma_\Gamma$. The inverse images of $\sigma_\Theta$ and $\sigma_\Gamma$ respectively give rise to polyhedral domains $\mathscr U_H$ and $\mathscr U_M$ in analytifications of formal tori. By~\cite[Section 6]{R12} the degree of the map $br^{an}$ restricted to $\mathscr U_H$ is given by the determinant of the morphism $\sigma_\Theta\to \sigma_\Gamma$. Furthermore, we have coordinates on these analytic tori, given by the deformation parameters, as previously discussed. Let $\xi_i$ be coordinates on $\mathscr U_M$ and $\widetilde \xi_i$ the coordinates on $\mathscr U_H$. It follows from the discussion in Section~\ref{sec: def1} that $\varphi^*(\xi_i) = \widetilde \xi_i^N$ where $N$ is the LCM of the ramification indices at the nodes lying over the $i^{\mathrm{th}}$ node. This is precisely equal to the dilation factor in the $i^{\mathrm{th}}$ coordinate for this map of covers of cones. Hence, locally analytically, the degree of this map is equal to the product of the LCM's of the ramifications over each node. 

We now need to pass from the skeleton $\Sigma(\Hbar^{an}_{g,d}(\vec\mu))$ to the tropical space $\cH^{trop}_{g,d}(\vec\mu)$.  Continue to fix the target graph $\Gamma$, and the combinatorial type $\Theta$ of the admissible cover. Choose an identification of the dual graph of the target with $\Gamma$. We need to understand how many points of $\Hbar_{g,d}(\vec\mu)$ have type $\Theta$. By choice of $\Gamma$, we can uniquely build a curve $C$, up to automorphisms of $\Gamma$, such that the dual graph of $C$ is $\Gamma$. The number of Harris--Mumford admissible covers such that the dual graph of the target is identified with $\Gamma$ is by definition the product of the local Hurwitz numbers of $\Theta$. Moreover, for each such algebraic admissible cover $[D\to C]$ in $\HMbar_{g,d}(\vec\mu)$, the number of preimages of $[D\to C]$ in $\Hbar_{g,d}(\vec\mu)$ is given by the weight (W3) in Section~\ref{sec: contractions}, namely, the product $\prod_{e\in E(\Gamma_{tgt})} M_e$, where $M_e$ is the product of the ramification indices above the node corresponding to $e$, divided by their LCM. These precisely are the weights on the tropical admissible cover space, and thus, we see that the weighted degree of the map $br^{trop}$ is equal to the degree of the map $br^\Sigma$. 

To conclude, observe that after a barycentric subdivision of source and target of $br^{trop}$, the branch map 
\[
br^{trop}: B(\cH^{trop}_{g,d}(\vec\mu))\to B(\cM^{trop}_{h,r+s})
\] 
is a morphism of cone complexes, and each cone of $B(\cH^{trop}_{g,d}(\vec\mu))$ maps isomorphically onto a an extended cone in the target, ignoring integral structures. Following the discussion in~\cite[Section 4.2]{ACP}, any top dimensional cone $\sigma$ of $B(\cM^{trop}_{h,r+s})$ has an associated combinatorial type $\Gamma$, and similarly, any top dimensional extended cone $ \sigma'$ of $B(\cH^{trop}_{g,d}(\vec\mu))$ has a combinatorial type $\Theta$. If $\sigma'$ lies over $\sigma$, then the base graph of $\Theta$ is identified with $\Gamma$. By the computation above, the weighted sum of dilation factors of cones of $B(\cH^{trop}_{g,d}(\vec\mu))$ mapping to $\overline \sigma$ precisely recovers the degree of $br^{an}$ and the result follows. 
\qed

\section{Applications to previous work}\label{sec: applications}

In this section, we recover known correspondence theorems for tropical Hurwitz numbers at the level of moduli spaces. In Section~\ref{sec: double-numbers}, we return to the motivating case of double Hurwitz numbers. The first correspondence theorem for double Hurwitz numbers was proved in~\cite{CJM1}. In that work, a tropical analogue of the relevant relative stable map space is constructed, in order to carry out the relevant intersection theory computations. We pay special attention to the relation of our tropical admissible cover spaces to the ones used there. The equality of tropical and classical double Hurwitz numbers can also be deduced from the general correspondence theorem of~\cite{BBM}, which we recover at the level of moduli spaces in Section~\ref{subsec-BBM}.

\subsection{Monodromy graphs and tropical double Hurwitz numbers}\label{sec: double-numbers}

We now frame the  ``monodromy graphs'' computation for the double Hurwitz numbers, introduced by the first two authors and Paul Johnson, in a geometric context. First, we briefly recall the relevant aspects of~\cite{CJM1}.

We fix two partitions $\mu^1 = (\mu^1_1,\ldots,\mu^1_k)$ and $\mu^2 = (\mu^2_1,\ldots,\mu^2_\ell)$ of degree $d$, and denote $s = 2g-2+\ell+k$, the number of simple branch points, determined by Riemann--Hurwitz.

\begin{definition}
\textit{Monodromy graphs} project to the segment $[0,s+1]$ and are constructed as follows:
\begin{enumerate}[(i)]
\item Start with $k$ small segments over $0$, with weights $\mu^1_1,\ldots, \mu^1_k$.
\item Over the point $1$, create a trivalent vertex by either joining two strands or splitting two strands. When joining two strands, label the outgoing edge with the sum of the incoming weights. In case of a cut, label the two new strands in all possible positive ways of adding the weight of the split edge. Each choice of split produces a distinct monodromy graph.
\item Repeat this process for integers up to $s$.
\item Retain all connected graphs that terminate with $\ell$ points of weights $\mu^2_1,\ldots,\mu^2_\ell$ over $s+1$.
\end{enumerate}
\end{definition}

It is proved in~\cite{CJM1} that such graphs produce a formula for the double Hurwitz number.

\begin{theorem}[C--Johnson--M]
The double Hurwitz number is equal to
\[
h_{g\to 0,d}(\vec\mu) = \sum_{\Gamma}\frac{1}{|Aut(\Gamma)|} \prod w(e),
\]
where we take the sum over isomorphism classes of monodromy graphs, and the product of interior edge weights of each graph (i.e. edges not over $0$ or $s+1$). 
\end{theorem}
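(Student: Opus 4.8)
The plan is to deduce this from Theorem~\ref{thm:branchdegree} by specializing to $h=0$, $r=2$ and $\vec\mu=(\mu^1,\mu^2)$, and choosing the fixed target cone $\sigma_\Gamma$ so that the right hand side of~\eqref{eq:hurn} literally reproduces the monodromy graph sum. Concretely, I would take $\Gamma$ to be the \emph{caterpillar} tree giving a top dimensional cone of $\cM^{trop}_{0,s+2}$: trivalent vertices $v_1,\dots,v_s$ in a chain, internal edges $e_i=\overline{v_iv_{i+1}}$ for $i=1,\dots,s-1$, the $\mu^1$-leg on $v_1$, the $\mu^2$-leg on $v_s$, and the $i$-th simple branch leg $q_i$ on $v_i$. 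With this choice, Theorem~\ref{thm:branchdegree} writes $h_{g\to 0,d}(\vec\mu)=\sum_\Theta \omega(\Theta)\, d_\Theta(br^{trop})$, the sum ranging over combinatorial types $\Theta$ of tropical admissible covers whose base graph is $\Gamma$.

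The first step is to identify these combinatorial types $\Theta$ with monodromy graphs $\Gamma_{\mathrm{mono}}$. A type $\Theta=[\Gamma_{src}\to\Gamma]$ is assembled vertex by vertex over the $v_i$, and the constraint $H(v_i)\neq 0$ imposed on the cones of $\cH^{trop}$ says exactly that there is a connected-source degree-$d$ admissible cover of a $\PP^1$ whose three special points carry the profiles read off from the three edges of $\Gamma$ at $v_i$, one of which is the simple profile $(2,1^{d-2})$ over $q_i$. A Riemann--Hurwitz computation on each connected component of such a local cover forces the unique component which sees a genuine transposition over $q_i$ to satisfy $\ell(\alpha)+\ell(\beta)=3$ for its two nodal profiles $\alpha,\beta$, and forces every other component to be a totally ramified cyclic cover unramified over $q_i$. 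Hence the local picture at $v_i$ is precisely one ``join'' or one ``cut'' of two strands, together with several ``straight-through'' strands, and propagating the strand weights from $v_1$ to $v_s$ is the inductive recipe (i)--(iv) of the definition; connectedness of $D$ matches the retention of connected graphs in (iv), and the fact that a top dimensional cone forces all source vertices to have genus $0$ makes $g(\Gamma_{src})=h^1(\Gamma_{src})=g$ the genus constraint. This produces a bijection between combinatorial types $\Theta$ over $\Gamma$ and isomorphism classes of monodromy graphs $\Gamma_{\mathrm{mono}}$, under which $\Gamma_{src}$ is $\Gamma_{\mathrm{mono}}$ decorated with the marked preimages of the $q_i$.

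The second step is to match $\omega(\Theta)\, d_\Theta(br^{trop})$ with $\tfrac{1}{|Aut(\Gamma_{\mathrm{mono}})|}\prod_e w(e)$. By Lemma~\ref{lem:det} the cone $\sigma_\Theta$ maps bijectively onto $\sigma_\Gamma$ as a topological cone, so $d_\Theta(br^{trop})$ measures how the two integral structures compare; by the normalization computation of Section~\ref{sec: def2}, the integral coordinate $t_i$ of $\sigma_\Theta$ along $e_i$ is such that the length of $e_i$ equals $\mathrm{lcm}_{e_i}\, t_i$, where $\mathrm{lcm}_{e_i}$ is the least common multiple of the expansion factors of the edges of $\Gamma_{src}$ over $e_i$, so $d_\Theta(br^{trop})=\prod_{i=1}^{s-1}\mathrm{lcm}_{e_i}$. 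The factor $(W3)$ is $M=\prod_{i}M_{e_i}$ with $M_{e_i}=\bigl(\prod_{e\mapsto e_i}w(e)\bigr)/\mathrm{lcm}_{e_i}$, whence $M\cdot d_\Theta(br^{trop})=\prod_{i=1}^{s-1}\prod_{e\mapsto e_i}w(e)=\prod_{e\text{ interior}}w(e)$. The factor $(W1)$ is $1/|Aut_0(\Theta)|$, and I would check $|Aut_0(\Theta)|=|Aut(\Gamma_{\mathrm{mono}})|$: any automorphism of $\Gamma_{\mathrm{mono}}$ fixes each of its trivalent vertices (they lie over distinct $v_i$) and therefore only permutes equal-weight parallel strand edges, which is exactly the group of automorphisms of $\Gamma_{src}$ covering the identity on $\Gamma$; in particular the ``symmetric cut'' $\ZZ/2$ belongs here and not to the local Hurwitz factor. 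Finally, once all preimages of branch points are marked, $(W2)$ is $\prod_i H(v_i)=1$, since the rigid local covers found above are counted with multiplicity one. Multiplying the three contributions recovers the monodromy graph weight, and summing over $\Theta$ gives the claimed formula.

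The main obstacle will be this weight reconciliation: it requires controlling the interplay between the dilation factor $d_\Theta(br^{trop})$ --- an artifact of the non-saturated logarithmic, equivalently non-normal Harris--Mumford, structure --- the $(W3)$-factors $M_{e_i}$, and the automorphism corrections, and then carefully bridging the convention of this paper, in which all preimages of all branch points are marked, with the convention of~\cite{CJM1} used in the statement. In particular one must verify that the extra marking factors (those over the $q_i$ and the partition-automorphism factors of $\mu^1$ and $\mu^2$) are exactly absorbed in passing between the two Hurwitz numbers, and that the ghost automorphism of a symmetric cut migrates cleanly from $H(v_i)$ into $Aut_0(\Theta)$ under this passage with no spurious factor left over.
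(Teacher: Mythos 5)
Your overall route is the same as the paper's: specialize Theorem~\ref{thm:branchdegree} to the caterpillar cell of $\Mbar^{trop}_{0,2+s}$, identify the cones lying over it with monodromy graphs via the local Riemann--Hurwitz analysis (one cut-or-join component plus cyclic straight-through components over each $v_i$), and reconcile $\omega(\Theta)\,d_\Theta(br^{trop})$ with $\tfrac{1}{|Aut|}\prod w(e)$ by combining the dilation factor $\prod_i N_{e_i}$ with the multiplicity $M=\prod_i M_{e_i}$ from Section~\ref{sec: def2}. That part is correct and matches Section~\ref{sec: CJM}.

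There is, however, a concrete error in the weight reconciliation, exactly at the point you flagged as the main obstacle. You assert both that $\prod_i H(v_i)=1$ and that $\prod_{i}\prod_{e\mapsto e_i}w(e)$ equals the monodromy-graph product $\prod_{e\ \text{interior}}w(e)$. Neither holds once some strand passes over an intermediate branch point without being cut or joined (which happens already for $h_{0\to 0,3}((1,1,1),(1,1,1))$, where one strand must pass straight over the vertex at which the other two are joined). A straight-through component of weight $w$ over $v_i$ is the cyclic cover totally ramified over the two adjacent nodes; its deck group $\ZZ/w$ fixes the two ramification points, so marking does not rigidify it and its local Hurwitz number is $h_{0\to 0,w}((w),(w))=1/w$, not $1$ --- this is precisely the content of Warning~\ref{warning: subdivision} and the first example in Section~\ref{sec: bkgrnd}. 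Correspondingly, a strand of the monodromy graph spanning $k+1$ consecutive target edges contributes $k+1$ interior edges of weight $w$ to $\Gamma_{src}$, so $\prod_{i}\prod_{e\mapsto e_i}w(e)$ exceeds the monodromy-graph product by $w^{k}$. The correct bookkeeping is that each of the $k$ subdivision vertices contributes a factor $1/w$ through $(W2)$ which cancels one of the $k$ extra edge-weight factors; dropping both, as your argument does, yields the wrong total whenever $k>0$. Relatedly, your claim that the ``rigid local covers'' all have $H=1$ conflates the cut-or-join components (where Proposition~\ref{prop: LZ} with $t=2$ does give $1$) with the cyclic ones, and the clean migration of the symmetric-cut $\ZZ/2$ into $Aut_0(\Theta)$ that you promise to check is likewise not automatic --- both issues need the explicit cancellation mechanism of Warning~\ref{warning: subdivision} rather than the blanket statement $\prod_i H(v_i)=1$.
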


\begin{example}
Consider for instance the monodromy graph depicted in Figure~\ref{fig: ex-monodromy-graph}. This graph has two automorphisms, coming from the double edge (``wiener").  We ignore the automorphisms coming from the ``fork" since the ends are now marked. The product of the weights of interior edges is $16$, so this graph contributes $8$ to the sum in the preceding theorem. 
\end{example}

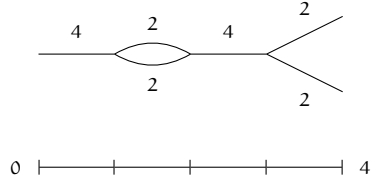
\begin{figure}[h!]
\begin{tikzpicture}
\draw (-1,0)--(0,0);
\path (0,0) edge [bend left] (1,0);
\path (0,0) edge [bend right] (1,0);
\draw (1,0)--(2,0);
\draw (2,0)--(3,0.5);
\draw (2,0)--(3,-0.5);

\draw [|-|] (-1,-1.5)--(0,-1.5);
\draw [-|] (0,-1.5)--(1,-1.5);
\draw [-|] (1,-1.5)--(2,-1.5);
\draw [-|] (2,-1.5)--(3,-1.5);

\draw node at (-0.5,0.3) {\tiny $4$};
\draw node at (0.5,0.4) {\tiny $2$};
\draw node at (0.5,-0.4) {\tiny $2$};
\draw node at (1.5,0.3) {\tiny $4$};
\draw node at (2.5,0.6) {\tiny $2$};
\draw node at (2.5,-0.6) {\tiny $2$};

\draw node at (-1.3,-1.5) {\tiny$ 0$};
\draw node at (3.3,-1.5) {\tiny$ 4$};

\end{tikzpicture}
\caption{A genus $1$ monodromy graph for degree $4$ covers of $\PP^1$ with ramification profiles $(4)$ over $0$ and $(2,2)$ over $\infty$.}
\label{fig: ex-monodromy-graph}
\end{figure}

We now study the monodromy graphs in terms of admissible covers, and recover the above formula. Moreover, we reinterpret the expected dimensional cells in the tropical moduli space of parametrized covers as a collection of cones in $\Hbar^{trop}_{g\to 0,d}(\mu^1,\mu^2)$, lying over a chosen cell in $\Mbar^{trop}_{0,2+s}$. From this vantage point, we see the factors appearing in the above sum from the geometric perspective developed in previous sections.  

\subsubsection{The CJM covers of $\PP^1_{trop}$} In order to reinterpret the CJM formula in our framework, we need to build tropical admissible covers from the tropical relative stable maps considered in~\cite{CJM1}. Not all relative stable maps produce admissible covers, but those combinatorial types of \textit{expected dimension} do produce admissible covers. Denote by $\PP^1_{trop}$, the ``two pointed'' tropical $\PP^1$, $\RR\cup\{\pm\infty\}$. We first recall the definition of tropical covers of $\PP^1_{trop}$ as stated in~\cite{CJM1}. 

\begin{definition}
Let $\mu^1,\mu^2$ be partitions of $d$. Let $\Gamma$ be a genus $g$, $\ell((\mu^1)+\ell(\mu^2))$-pointed tropical curve. A \textit{parametrized tropical curve of genus $g$ and degree $(\mu^1,\mu^2)$ in $\PP^1$} is a integral harmonic morphism $\theta: \Gamma\to \PP^1_{trop}$, where $\Gamma$ has genus $g$, such that 
\begin{enumerate}[(i)]
\item The image of $\Gamma$ without its infinite edges is inside $\RR$.
\item The multiset of expansion factors over the $+\infty$ segment is given by $\mu^1$, and the multiset of expansion factors over the $-\infty$ segment is given by $\mu^2$. 
\end{enumerate}
\end{definition}

A \textit{combinatorial type} of a parametrized tropical curve in $\PP^1$ is the data obtained from dropping the edge length data and remembering only the source curve together with its expansion factors. For a combinatorial type $[\alpha]$ of covers, we build an unbounded, open unbounded convex polyhedron, formed by varying edge lengths. These cells glue together to form a moduli space $M_{g}(\PP^1_{trop},\mu^1,\mu^2)$. We refer to~\cite{CJM1} for details on the construction. 

The key difference between parametrized tropical curves in $\PP^1_{trop}$ and admissible covers is that parametrized curves in $\PP^1_{trop}$ may contract subgraphs. However, if there is a combinatorial type with a contracted component, it will not be of expected dimension, and will not contribute to the degree. In fact, the moduli space of tropical covers constructed in~\cite{CJM1} does not consider cells where the associated combinatorial type is not of expected dimension.   

It is shown in~\cite{CJM1} that the moduli space admits a natural branch map to $(\PP^1_{trop})^s$ where $s$ is the number of simple branch points. Moreover, by weighting this moduli space appropriately, the degree of this branch map essentially recovers the formula above. In particular, the factor $\prod w(e)$ arises as a product of the determinant of the branch map, times a certain weight on each cone of $M_{g}(\PP^1_{trop},\mu^1,\mu^2)$. We remark that in the construction of $M_{g}(\PP^1_{trop},\mu^1,\mu^2)$, it is necessary to disregard cells of unexpected dimension, in order to obtain a well defined degree.

Given a parametrized tropical cover whose combinatorial type is of expected dimension, we obtain a admissible cover by first giving the base $\PP^1_{trop}$ the natural structure of a $2$-pointed tropical curve as follows. We mark the images of all branch points of $\Gamma$. Since the combinatorial type is of expected dimension, there are precisely $s$ points which are marked, where $s$ is the number of simple branch points.

Additionally, we subdivide $\Gamma$ such that vertices map to vertices. This amounts to making each point in the preimage of a branch point into a vertex, see Figure~\ref{fig: subdivision}. Finally, we add an infinite edge to each of the $s$ marked points on the base obtaining what we call the path graph on $s$ vertices. The ramification over these infinite edges is simple. By the local Riemann--Hurwitz condition, there is a unique way to add $(d-1)$ infinite edges mapping to each new infinite edge added on the base graph, such that the ramification over each infinite edge is simple. We record the following observation.

\begin{proposition}
Let $\Gamma$ be the path graph on $s$ vertices, and let $\cH_\Gamma$ be subcomplex of $\cH^{trop}_{g,d}(\mu^1,\mu^2)$ such that $\Gamma_{tgt} = \Gamma$. Then, there is an identification of cone complexes
\[
\cH_\Gamma \cong M_g(\PP^1_{trop},\mu^1,\mu^2).
\]
\end{proposition}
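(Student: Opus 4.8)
The plan is to exhibit a bijection of cones that respects faces, integral structure, and gluing, by unwinding what each side parametrizes. On the left, a cone of $\cH_\Gamma$ corresponds to a combinatorial type $\Theta = [\Gamma_{src}\to\Gamma_{tgt}]$ with $\Gamma_{tgt} = \Gamma$ the path graph on $s$ vertices (with $s$ infinite edges attached, one per vertex, over which ramification is simple, plus the two infinite edges at $\pm\infty$ carrying $\mu^1,\mu^2$). By Lemma~\ref{lem:det}, a tropical admissible cover of type $\Theta$ is determined by a metrization of the $s-1$ bounded edges of $\Gamma$; after quotienting by $\Aut(\Theta)$ one gets the cone $\sigma_\Theta$. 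On the right, a cone of $M_g(\PP^1_{trop},\mu^1,\mu^2)$ corresponds to a combinatorial type $[\alpha]$ of a parametrized tropical curve $\theta\colon \Gamma'\to \PP^1_{trop}$ of expected dimension, and the cell is cut out by varying the $s-1$ edge lengths of the bounded part of $\RR$ between consecutive branch-point images (after the subdivision that makes vertices map to vertices), again modulo automorphisms.

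First I would set up the map in the direction $M_g(\PP^1_{trop},\mu^1,\mu^2) \to \cH_\Gamma$ on combinatorial types: given a combinatorial type of expected dimension, perform the subdivision described just above the proposition (make every preimage of a branch point a vertex), mark the images of all branch points on the base — there are exactly $s$ of them precisely because the type has expected dimension — attach the $s$ infinite edges on the base and, by the local Riemann--Hurwitz condition, the unique collection of $(d-1)$ simple infinite edges on the source over each. This produces a combinatorial admissible cover whose target is the path graph $\Gamma$, hence a cone of $\cH_\Gamma$. I would check this is well-defined on isomorphism classes and compatible with edge contractions (contracting a bounded edge of $\RR$ on one side matches contracting the corresponding bounded edge of $\Gamma$ on the other, using the Proposition in Section~\ref{sec: contractions} that contractions of covers are governed by contractions of the target). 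Then I would construct the inverse: given a combinatorial admissible cover with $\Gamma_{tgt} = \Gamma$, delete the $s$ finite infinite edges on the base and their preimages, and remap the bounded part into $\RR$; the harmonicity and the balancing at the added ends force this to be the subdivision of a unique parametrized type, and expected dimensionality is automatic since the bounded part of $\Gamma$ has $s-1$ edges. Each bounded edge contributes one length coordinate on both sides, with the same integral (lattice) structure, since the slopes/expansion factors are identical; and the automorphism groups agree, because an automorphism of $\Theta$ fixing $\Gamma$ restricts to an automorphism of the parametrized type and conversely the added simple infinite edges and their preimages admit no extra symmetry beyond what acts on the source.

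Having the bijection on combinatorial types that is compatible with face maps and automorphisms, I would conclude that it induces an isomorphism of the colimit cone complexes $\cH_\Gamma \cong M_g(\PP^1_{trop},\mu^1,\mu^2)$, using that both are built as topological colimits over the same directed poset (combinatorial types ordered by contraction) of the same cones with the same integral structures and the same gluing data. I would note the mild bookkeeping that a barycentric subdivision may be needed on both sides to regard these as honest cone complexes (as in Section~\ref{sec: contractions}), but the subdivision is natural and respected by the identification.

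The main obstacle I expect is the careful matching of automorphism groups and the handling of contracted subgraphs. On the $M_g(\PP^1_{trop},\mu^1,\mu^2)$ side, the moduli space by construction discards cells whose combinatorial type is not of expected dimension — in particular those with a contracted source component — so I must argue that the cones of $\cH_\Gamma$ (with $\Gamma$ the path graph) are exactly in bijection with the retained cells, i.e.\ that every combinatorial admissible cover over the path graph corresponds to an expected-dimension parametrized type and that no expected-dimension type is lost; this is where the remark that ``contracted components force unexpected dimension'' does the work, but it needs to be applied in reverse. The second delicate point is that adding the $s$ simple infinite edges and then forgetting them could in principle change the automorphism group (e.g.\ permuting the $d-1$ unramified preimages over a simple branch point); I would need to confirm that these permutations are already accounted for in $\Aut(\Theta)$ versus the automorphisms of the parametrized type in~\cite{CJM1}, so that the quotients on the two sides genuinely agree and the induced map of colimits is an isomorphism rather than merely a bijective morphism.
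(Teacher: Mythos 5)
Your proposal is correct and follows essentially the same route as the paper: the paper's proof is simply the statement that the result ``follows immediately from the preceding discussion,'' namely the forward construction you describe (subdivide so vertices map to vertices, mark the $s$ branch-point images, attach the simple infinite edges uniquely via local Riemann--Hurwitz). Your version merely makes explicit the inverse map, the compatibility with contractions, and the matching of integral structures and automorphism groups (the latter being unproblematic here since all preimages of branch points are marked in both moduli problems), which the paper leaves implicit.
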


The proof follows immediately from the preceding discussion. 

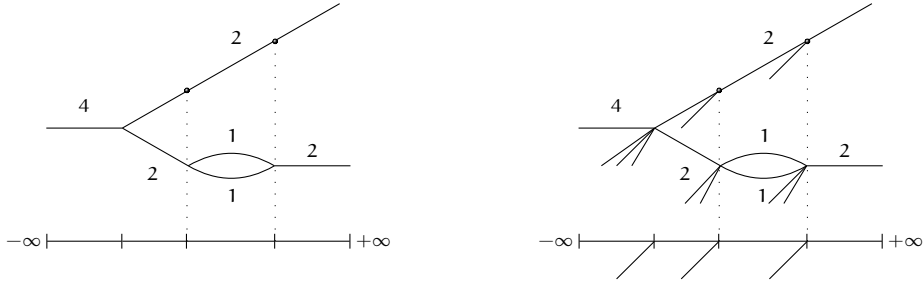
\begin{figure}[h!]
\begin{tikzpicture}
\draw (-1,0)--(0,0);
\draw (0,0)--(30:3.3);
\draw (0,0)--(-30:1);
\path (-30:1) edge [bend left] (2,-0.5);
\path (-30:1) edge [bend right] (2,-0.5);
\draw (2,-0.5)--(3,-0.5);


\draw node at (-0.5,0.3) {\tiny $4$};
\draw node at (1.5,1.2) {\tiny $2$};
\draw node at (.4,-0.6) {\tiny $2$};
\draw node at (1.45,-0.9) {\tiny $1$};
\draw node at (1.45,-0.1) {\tiny $1$};
\draw node at (2.5,-0.3) {\tiny $2$};
\draw node at (-1.3,-1.5) {\tiny $-\infty$};
\draw node at (3.3,-1.5) {\tiny $+\infty$};

\draw [|-|]  (-1,-1.5)--(0,-1.5);
\draw [-|]  (0,-1.5)--(0.85,-1.5);
\draw [-|]  (0.85,-1.5)--(2.01,-1.5);
\draw [-|]  (2.01,-1.5)--(3,-1.5);

\draw [loosely dotted] (0.85,-1.5)--(0.85,0.5);
\draw [ball color=black] (0.85,0.5) circle (0.3mm);
\draw [loosely dotted] (2.01,-1.5)--(2.01,1);
\draw [ball color=black] (2.01,1.15) circle (0.3mm);

\begin{scope}[shift = {(7,0)}]
\draw (-1,0)--(0,0);
\draw (0,0)--(30:3.3);
\draw (0,0)--(-30:1);
\path (-30:1) edge [bend left] (2,-0.5);
\path (-30:1) edge [bend right] (2,-0.5);
\draw (2,-0.5)--(3,-0.5);

\draw (0,0)--(-0.5,-0.5);
\draw (0,0)--(-0.7,-0.5);
\draw (0,0)--(-0.3,-0.5);
\draw (0.85,0.5)--(0.35,0);
\draw (-30:1)--(0.4,-1);
\draw (-30:1)--(0.6,-1);
\draw (2.01,1.15)--(1.51,0.65);
\draw (2,-0.5)--(1.5,-1);
\draw (2,-0.5)--(1.7,-1);


\draw node at (-0.5,0.3) {\tiny $4$};
\draw node at (1.5,1.2) {\tiny $2$};
\draw node at (.4,-0.6) {\tiny $2$};
\draw node at (1.42,-0.9) {\tiny $1$};
\draw node at (1.45,-0.1) {\tiny $1$};
\draw node at (2.5,-0.3) {\tiny $2$};
\draw node at (-1.3,-1.5) {\tiny $-\infty$};
\draw node at (3.3,-1.5) {\tiny $+\infty$};

\draw [|-|]  (-1,-1.5)--(0,-1.5);
\draw [-|]  (0,-1.5)--(0.85,-1.5);
\draw [-|]  (0.85,-1.5)--(2.01,-1.5);
\draw [-|]  (2.01,-1.5)--(3,-1.5);

\draw [loosely dotted] (0.85,-1.5)--(0.85,0.5);
\draw [ball color=black] (0.85,0.5) circle (0.3mm);
\draw [loosely dotted] (2.01,-1.5)--(2.01,1);
\draw [ball color=black] (2.01,1.15) circle (0.3mm);

\draw(0,-1.5)--(-0.5,-2);
\draw (0.85,-1.5)--(0.35,-2);
\draw (2.01,-1.5)--(1.51,-2);
\end{scope}

\end{tikzpicture}
\caption{A parametrized tropical curve of genus $1$ on the left, and the corresponding admissible cover on the right. The infinite edges carry simple ramification $(2,1,\ldots,1)$. The expansion factors of $2$ occur on trivalent vertices of the parametrized tropical curve. }
\label{fig: subdivision}
\end{figure}

When covers have no contracted components, and the cell of the tropical moduli space of the expected dimension, we essentially recover a cone of tropical admissible covers. The only change is that here, we drop the data of the root vertex, which is irrelevant, since the combinatorics does not change if we translate the $s$ branch points by a fixed real number. It is easy to see that the branch map defined in~\cite{CJM1} transforms naturally into the map $br^{trop}$ defined here. With this translation, we recover the previously defined tropical double Hurwitz numbers, as we now demonstrate. 

\subsubsection{The CJM formula for the double Hurwitz number}~\label{sec: CJM}
We now turn our attention to the expression of double Hurwitz numbers in terms of monodromy graphs, studied in~\cite{CJM1}.

The following useful proposition is due to Lando and Zvonkine~\cite{LZ99}.
\begin{proposition}\label{prop: LZ}
Let $\mu^1 = (d)$ and $\mu^2$ be arbitrary with $t$ parts. Then we have the formula
\[
h_{0\to 0,d}(\mu^1,\mu^2) = (t-1)!d^{t-2}. 
\]
Observe that if $\mu^2$ is a \textit{two-part partition}, i.e. $t=2$, then $h_{0\to 0,d}(\mu^1,\mu^2) = 1$. 
\end{proposition}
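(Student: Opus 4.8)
The plan is to prove this by the character theory of $S_d$, which is the approach of Lando--Zvonkine (see also Goulden--Jackson--Vakil). First I would pass to a count of factorizations in $S_d$. By Riemann--Hurwitz there are $s=t-1$ simple branch points, and, after choosing a base point on the target $\PP^1$, a cover contributing to $h_{0\to 0,d}((d),\mu^2)$ is encoded by a tuple $(\sigma_1,\tau_1,\dots,\tau_{t-1},\sigma_2)$ in $S_d$ with $\sigma_1$ a $d$-cycle, each $\tau_i$ a transposition, $\sigma_2$ of cycle type $\mu^2$, and $\sigma_1\tau_1\cdots\tau_{t-1}\sigma_2=1$. Because $\sigma_1$ is a full cycle, the subgroup generated is automatically transitive, so every such factorization is connected. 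Let $N$ denote the number of such tuples. The passage from $N$ to the Hurwitz number is a matter of the elementary bookkeeping of automorphism weights and of the marking conventions of the paper; working this out, one finds that the normalization is such that $h_{0\to 0,d}((d),\mu^2)=\frac{N}{|C_{\mu^2}|\prod_i\mu^2_i}$, where $C_{\mu^2}$ is the conjugacy class of type $\mu^2$ --- this is precisely the normalization for which the already-recorded value $h_{0\to 0,d}((d),(d))=\tfrac1d$ comes out correctly (the case $t=1$).

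Second, I would evaluate $N$ by the Frobenius class-algebra formula. Writing $f^\lambda=\chi^\lambda(1)$ for the dimension of the irreducible $V_\lambda$ and $\mathrm{ct}(\lambda)=\sum_{\square\in\lambda}c(\square)$ for its content sum --- equivalently, the scalar by which the central element $\sum_{1\le i<j\le d}(i\,j)\in\CC[S_d]$ acts on $V_\lambda$ --- one has
\[
N=\frac{|C_{\mu^2}|}{d}\sum_{\lambda\vdash d}\chi^\lambda\big((d)\big)\,\chi^\lambda(\mu^2)\,\mathrm{ct}(\lambda)^{\,t-1}.
\]
Two standard facts collapse this sum. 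First, by the Murnaghan--Nakayama rule, $\chi^\lambda((d))$ vanishes unless $\lambda=(d-j,1^{\,j})$ is a hook, in which case $\chi^{(d-j,1^{\,j})}((d))=(-1)^j$. Second, a direct count of contents gives $\mathrm{ct}\big((d-j,1^{\,j})\big)=\tfrac12 d(d-1-2j)$. Therefore $N=\frac{|C_{\mu^2}|}{d}\big(\frac d2\big)^{t-1}S$, where
\[
S:=\sum_{j=0}^{d-1}(-1)^j\,\chi^{(d-j,1^{\,j})}(\mu^2)\,(d-1-2j)^{\,t-1}.
\]

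Third, I would compute $S$ using the generating function for hook characters: for a permutation of cycle type $\rho=(\rho_1,\dots,\rho_t)$,
\[
\sum_{j=0}^{d-1}\chi^{(d-j,1^{\,j})}(\rho)\,q^{\,j}=\frac{1}{1+q}\prod_{i=1}^{t}\big(1-(-q)^{\rho_i}\big),
\]
which follows from the two-term Jacobi--Trudi expansion of a hook Schur function together with the Hall-pairing formula for $\langle p_\rho,h_ae_b\rangle$ (or may be quoted from a standard reference). Substituting $q\mapsto -q$ identifies $\sum_j(-1)^j\chi^{(d-j,1^{\,j})}(\mu^2)q^{\,j}$ with $g(q):=\frac{1}{1-q}\prod_i\big(1-q^{\mu^2_i}\big)$, which has a zero of exact order $t-1$ at $q=1$, with $g(q)=(-1)^{t-1}\big(\prod_i\mu^2_i\big)(q-1)^{t-1}+O\big((q-1)^t\big)$. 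Since $S=\big[\big((d-1)-2q\tfrac{d}{dq}\big)^{t-1}g(q)\big]_{q=1}$ and $g$ vanishes to order $t-1$ there, only the top-order piece of the operator --- namely $(-2)^{t-1}\tfrac{d^{\,t-1}}{dq^{\,t-1}}$ applied to the leading Taylor term of $g$ --- contributes at $q=1$, giving $S=(-2)^{t-1}(t-1)!\,(-1)^{t-1}\prod_i\mu^2_i=2^{t-1}(t-1)!\prod_i\mu^2_i$. Feeding this back, $N=(t-1)!\,d^{\,t-2}\,|C_{\mu^2}|\prod_i\mu^2_i$, and dividing by the normalization from the first step yields $h_{0\to 0,d}((d),\mu^2)=(t-1)!\,d^{\,t-2}$; the case $t=2$ is the stated value $1$.

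I expect the main obstacle to be precisely the bookkeeping of the first step --- matching the automorphism and marking conventions of the paper to the bare factorization count $N$ --- which is elementary but must be carried out carefully; a useful sanity check is that the shape of $\mu^2$ (beyond its number of parts) drops out of the final formula, as it must. The evaluation of $S$ is routine once the hook-character identity is in hand. (Alternatively, and in closer keeping with the rest of the paper, one may argue via the C--Johnson--M theorem: when $\mu^1=(d)$ and $g=0$ the only monodromy graphs are the rooted weighted trees produced by $t-1$ successive splittings of a single weight-$d$ strand, all with trivial automorphisms, and the formula reduces to the identity $\sum_T \frac{\prod w(e)}{\prod_v|T_v|}=d^{\,t-2}$, the products being over internal edges $e$ and internal vertices $v$ of $T$, with $|T_v|$ the number of internal vertices of the subtree at $v$; this can be established by induction on $t$ by removing the root vertex, or by Lagrange inversion.)
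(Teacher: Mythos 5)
The paper offers no proof of this statement at all --- it is quoted as a known result of Lando--Zvonkine --- so your argument is entirely new content relative to the paper, and it is correct. What you give is essentially the standard character-theoretic derivation (Frobenius/Burnside formula, restriction to hook shapes, content sums, and the hook-character generating function), and every step checks out: transitivity is indeed automatic because $\sigma_1$ is a full cycle, so the raw Frobenius count already counts connected covers; the identity $\binom{d}{2}\chi^\lambda\bigl((2,1^{d-2})\bigr)/f^\lambda=\mathrm{ct}(\lambda)$ turns the $t-1$ transposition factors into $\mathrm{ct}(\lambda)^{t-1}$; $\mathrm{ct}\bigl((d-j,1^j)\bigr)=\binom{d-j}{2}-\binom{j+1}{2}=\tfrac12 d(d-1-2j)$; and the function $g(q)=\frac{1}{1-q}\prod_i(1-q^{\mu^2_i})$ vanishes to exact order $t-1$ at $q=1$ with leading coefficient $(-1)^{t-1}\prod_i\mu^2_i$, so only the top-order part of $\bigl((d-1)-2q\frac{d}{dq}\bigr)^{t-1}$ survives and $S=2^{t-1}(t-1)!\prod_i\mu^2_i$ as you claim. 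On the one point you flag as delicate: your normalization $h=N/(|C_{\mu^2}|\prod_i\mu^2_i)=N\cdot\prod_k m_k(\mu^2)!/d!$ is exactly the automorphism-weighted count of covers with the fibers over $p_1$ and $p_2$ labeled, which is the convention of \cite{LZ99} and the one under which $(t-1)!\,d^{t-2}$ is the true value (it also reproduces $1/d$ at $t=1$ and $1$ at $t=2$ including the $a=b$ case, where $N=d!/2$ is compensated by $|\mathrm{Sym}(\mu^2)|=2$). Be aware, though, that this quietly does \emph{not} label the $d-2$ unramified preimages over each of the $t-1$ simple branch points; the paper's blanket convention of marking all preimages (compare the factor $2^3$ in its example $h_{1\to 0,4}((2,2),(4))$) would multiply your answer by $((d-2)!)^{t-1}$. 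That tension is internal to the paper, not a flaw in your proof of the Proposition as stated. Your closing sketch via monodromy graphs is plausible but not worked out; the character-theoretic argument is the complete proof and stands on its own.
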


\begin{warning}\label{warning: subdivision}
Given a parametrized tropical curve, when we subdivide, we create new vertices and consequently new interior edges. There is a unique expansion factor on his new interior edges by the harmonicity condition. Let $v$ be a new vertex created in such a manner by subdivision. The local Hurwitz number at $v$ is given by $h_{0\to 0,d}((d),(d))$, which we know to be $1/d$. It will be crucial in the forthcoming discussion that the quantity $h_{0\to 0,d}((d),(d))$, times the weight on the new bounded edge is  $1$. See Figure~\ref{fig: subdivision}. For a related issue, see the discussion in~\cite[Lemma 3.5]{BM13}. 
\end{warning}

\noindent
\textit{Proof of the CJM formula.} We work with the tropical admissible cover space $\overline{\cH}^{trop}_{g\to 0, d}(\mu^1,\mu^2)$. Since the degree of the tropical branch map is constant, we may choose to compute it over a fixed top dimensional cell in $\Mbar^{trop}_{0,2+s}$, where $s$ is the number of simple branch points. We choose the locus of curves whose combinatorial type is a path graph, augmented with one infinite edge at every bivalent vertex, as shown in Figure~\ref{fig: m0n-cell}. We denote this combinatorial type by $[\Gamma]$, and by $\cM_{\Gamma}$ the corresponding cell of the tropical moduli space. 

It was observed in~\cite[Remark~5.2]{CJM1} that the combinatorial types lying over $\cM_{\Gamma}$ have a totally degenerate genus function on the source curve. Furthermore, after contracting the infinite edges corresponding to the $s$ simple branch points and their preimages, every edge of $\Gamma_{src}$ is trivalent. Consequently we see that the profiles for the local Hurwitz numbers of $\Gamma_{src}$ are given by $(d)$ (total ramification) and a two-part ramification profile. Thus, all local Hurwitz numbers are $1$ by Proposition~\ref{prop: LZ}, except those introduced by subdivision, which were discussed in Warning~\ref{warning: subdivision}. 

\begin{figure}[h!]
\begin{tikzpicture}[scale=2]
\draw [ball color=black] (1,0) circle (0.3mm);
\draw [ball color=black] (1.5,0) circle (0.3mm);
\draw [ball color=black] (2,0) circle (0.3mm);
\draw [ball color=black] (2.5,0) circle (0.3mm);
\draw [ball color=black] (3,0) circle (0.3mm);

\foreach \a in {1,1.5,2,2.5}
	\draw (\a,0)--(\a+0.5,0);
\draw  (0.75,0)--(1,0);
\draw  (3.25,0)--(3,0);
\draw [densely dotted] (3.25,0)--(3.5,0);
\draw [densely dotted] (0.75,0)--(0.5,0);
\draw [ball color=black] (3.5,0) circle (0.3mm);
\draw [ball color=black] (0.5,0) circle (0.3mm);

\foreach \a in {1,1.5,2,2.5,3}
	\draw (\a,0)--(\a,-0.25);
\foreach \a in {1,1.5,2,2.5,3}
	\draw [densely dotted] (\a,-0.25)--(\a,-0.5);
\foreach \a in {1,1.5,2,2.5,3}
	\draw [ball color=black] (\a,-0.5) circle (0.3mm);

\end{tikzpicture}
\caption{\small The chosen cell $\cM_\Gamma$ in $\Mbar^{trop}_{0,2+s}$.}
\label{fig: m0n-cell}
\end{figure}

The cones mapping onto $\M_{\Gamma}$ via the branch map are precisely those cones $\cH_{\Theta}$ where $\Theta = [\Gamma_{src}\to \Gamma]$. Here the $\Gamma_{src}$  precisely correspond to the monodromy graphs of~\cite{CJM1}. To compute the degree of the branch map over this cell $\cM_{\Gamma}$, we need only compute the degrees of maps from the individual top dimensional cells lying over $\cM_{\Gamma}$, and add the resulting contributions. 
%

Consider an admissible cover $[D\to C]$ in $\Hbar_{g\to 0,d}(\mu^1,\mu^2)$ lying over the stable nodal genus $0$ curve $[C]$ in $\Mbar_{0,2+s}$. We denote by $\xi_i$ the deformation parameter of the $i$th node of $[C]$, and by $\widetilde \xi_i$ the deformation parameter of the $i$th node of the base of the admissible cover.

With the above discussion in mind, fix a top dimensional cell $\cH_\Theta$ in the tropical Hurwitz space. We need to understand the dilation factor that this map induces on integral structures. Recall that the coordinates on the cone $\cM_{\Gamma_{src}}$ are given by $val(\xi_i)$, the valuation of the deformation parameters. Recall from our deformation theory computations in Section~\ref{sec: def2},
\[
br^*(\xi_i) = \widetilde{\xi_i}^{N_i},
\]
where $N_i$ is the LCM of the ramification above the nodes of $D$ lying above the $i$th node of $C$. It follows that
\[
 val(br^*(\xi_i)) = N_i\cdot val(\xi_i).
 \]
 However, as we discussed in Section~\ref{sec: def2}, there are $M = \prod_{e\in E(\Gamma_{tgt})} M_e$ zero strata in the space $\Hbar_{g\to h,d}(\vec\mu)$ for each chosen nodal cover, where $M_e$ is the product of the ramification indices above the node corresponding to $e$, divided by their LCM. Clearly the total contribution $M_eN_e$ at a node corresponding to $e$ is the product of the ramification indices above that node. Ranging over all nodes, we see that the total degree is the product of the edge weights. Keeping in mind Warning~\ref{warning: subdivision}, we see that for each combinatorial type, we recover the weight in the CJM formulae. Appropriately taking into account automorphisms, we recover the desired formula
\[
h_{g\to 0,d}(\vec\mu) = \sum_{\Gamma}\frac{1}{|Aut(\Gamma)|} \prod w(e).
\] 
\qed

\begin{remark}
The induced map $br^{\Sigma}$ on skeleta is an isomorphism on each cone if we forget about the integral structure. The same is true for $br^{trop}$. The enumerative information relies crucially on the integral structure of these cones and the weights, which in turn, relies heavily on the deformation theory.
\end{remark}

\begin{remark}
Although tropicalization is a relatively new to the study of Hurwitz numbers, the spirit of these results is quite classical -- namely, using degeneration techniques to study (enumerative) geometry. In the above computations, we choose a suitable degeneration of the base curve to a rational curve with desirable properties. For instance, the caterpillar curve above (Figure~\ref{fig: m0n-cell}) has the property that it allows us to easily compute local Hurwitz numbers. This strategy was actualized for double Hurwitz numbers in~\cite{CJM2}.
\end{remark}

\subsection{The general correspondence theorem for Hurwitz numbers}\label{subsec-BBM}

We now reprove the general correspondence theorem for Hurwitz numbers from \cite{BBM} at the level of moduli spaces using our newly developed techniques.

Recall from \cite{BBM} that for a tropical admissible cover of combinatorial type $\Theta = [\theta:\Gamma_{src}\to\Gamma_{tgt}]$ as in Section \ref{subsec-tropadm}, the \textit{multiplicity} (depending only on the combinatorial type) is defined to be 
\begin{equation}
\label{multmult}
 \frac{1}{|Aut_0(\Theta)|}\cdot \prod_{v\in \Gamma_{tgt}} H(v) \cdot \prod_{e \in \Gamma_{src} }d_e(\theta),
 \end{equation}
where the second product goes over all interior edges $e$ of $\Gamma_{src}$ and $d_e(\theta)$ denotes their expansion factors.
We decide to mark preimages of branch points, resulting in a simplification in our expression of the local Hurwitz numbers compared to \cite{BBM}.

For a fixed trivalent target tropical curve of genus $h$ with totally degenerate genus function $\Gamma_{tgt}$, in \cite{BBM} the tropical Hurwitz number $h^{trop}_{g\to h,d}(\vec\mu)$ is defined to be the weighted number of admissible covers of $\Gamma_{tgt}$, satisfying the prescribed genus and ramification conditions, counted with the multiplicity defined in \eqref{multmult}. This number does not depend on the choice of $\Gamma_{tgt}$.


\textit{Proof of Theorem~\ref{thm-BBM}.}
From Theorem~\ref{thm:branchdegree}, we know already that $h_{g\to h,d}(\vec\mu)$, which equals the degree of the branch map, also equals the degree of the tropical branch map. All that remains to be seen is that the multiplicity of an admissible cover defined above equals the dilation factors times the weight of the cone of the corresponding combinatorial type. This follows analogously to the proof of the monodromy graph formula as in Section \ref{sec: CJM}. \qed

The monodromy graph formula is implied by Theorem \ref{thm-BBM} using the method of attaching infinite edges in the manner described in Section~\ref{sec: double-numbers}.

\bibliographystyle{siam}
\bibliography{TropicalAdmissibleCovers}

\begin{thebibliography}{10}

\bibitem{ACP}
{\sc D.~Abramovich, L.~Caporaso, and S.~Payne}, {\em The tropicalization of the
  moduli space of curves}, Ann. Sci. {\'E}c. Norm. Sup{\'e}r.,  (To appear).

\bibitem{ACV}
{\sc D.~Abramovich, A.~Corti, and A.~Vistoli}, {\em Twisted bundles and
  admissible covers}, Comm. Algebra, 31 (2003), pp.~3547--3618.

\bibitem{AMR}
{\sc D.~Abramovich, K.~Matsuki, and S.~Rashid}, {\em A note on the
  factorization theorem of toric birational maps after {M}orelli and its
  toroidal extension}, Tohoku Math. J., 51 (1999), pp.~489--537.

\bibitem{ABBR}
{\sc O.~Amini, M.~Baker, E.~Brugall{\'e}, and J.~Rabinoff}, {\em {Lifting
  harmonic morphisms I: metrized complexes and Berkovich skeleta}}, Research in
  the Mathematical Sciences,  (To appear).

\bibitem{BPR}
{\sc M.~Baker, S.~Payne, and J.~Rabinoff}, {\em Nonarchimedean geometry,
  tropicalization, and metrics on curves}, Algebraic Geometry,  (To appear).

\bibitem{Ber90}
{\sc V.~G. Berkovich}, {\em Spectral theory and analytic geometry over
  non-Archimedean fields}, vol.~33, American Mathematical Society, 1990.

\bibitem{BBM}
{\sc B.~Bertrand, B.~Brugall{\'e}, and G.~Mikhalkin}, {\em Tropical open
  {H}urwitz numbers}, Rend. Semin. Mat. Univ. Padova, 125 (2011), pp.~157--171.

\bibitem{BMV11}
{\sc S.~Brannetti, M.~Melo, and F.~Viviani}, {\em On the tropical {T}orelli
  map}, Adv. Math., 226 (2011), pp.~2546--2586.

\bibitem{BM13}
{\sc A.~Buchholz and H.~Markwig}, {\em Tropical covers of curves and their
  moduli spaces}, Comm. Contemp. Math., 17 (2015), p.~1350045.

\bibitem{Cap11}
{\sc L.~Caporaso}, {\em Algebraic and tropical curves: comparing their moduli
  spaces}, To appear in Handbook of Moduli, edited by G. Farkas and I.
  Morrison,  (2011).

\bibitem{cap:gonality}
\leavevmode\vrule height 2pt depth -1.6pt width 23pt, {\em Gonality of
  algebraic curves and graphs}, in Algebraic and complex geometry, vol.~71 of
  Springer Proc. Math. Stat., Springer, Cham, 2014, pp.~77--108.

\bibitem{CJM1}
{\sc R.~Cavalieri, P.~Johnson, and H.~Markwig}, {\em Tropical {H}urwitz
  numbers}, J. Alg. Combin., 32 (2010), pp.~241--265.

\bibitem{CJM2}
\leavevmode\vrule height 2pt depth -1.6pt width 23pt, {\em Wall crossings for
  double {H}urwitz numbers}, Adv. Math., 228 (2011), pp.~1894--1937.

\bibitem{renzosbook}
{\sc R.~Cavalieri and E.~Miles}, {\em From Riemann Surfaces to Algebraic
  Geometry: A First Course in Hurwitz Theory}, Cambridge University Press,
  2015.

\bibitem{CMV12}
{\sc M.~Chan, M.~Melo, and F.~Viviani}, {\em {Tropical Teichm\"uller and Siegel
  spaces}}, Proceedings of the CIEM workshop in tropical Geometry, Contemporary
  Mathematics, 589 (2013), pp.~45--85.

\bibitem{ELSV}
{\sc T.~Ekedahl, S.~Lando, M.~Shapiro, and A.~Vainshtein}, {\em Hurwitz numbers
  and intersections on moduli spaces of curves}, Invent. Math., 146 (2001),
  pp.~297--327.

\bibitem{GKM07}
{\sc A.~Gathmann, M.~Kerber, and H.~Markwig}, {\em Tropical fans and the moduli
  space of rational tropical curves}, Compos. Math., 145 (2009), pp.~173--195.

\bibitem{GJ1}
{\sc I.~P. Goulden and D.~M. Jackson}, {\em Transitive factorisations into
  transpositions and holomorphic mappings on the sphere}, Proc. Amer. Math.
  Soc., 125 (1997), pp.~51--60.

\bibitem{GJ2}
\leavevmode\vrule height 2pt depth -1.6pt width 23pt, {\em Transitive powers of
  {Y}oung-{J}ucys-{M}urphy elements are central}, J. Algebra, 321 (2009),
  pp.~1826--1835.

\bibitem{GV05}
{\sc T.~Graber and R.~Vakil}, {\em Relative virtual localization and vanishing
  of tautological classes on moduli spaces of curves}, Duke Math. J., 130
  (2005), pp.~1--37.

\bibitem{GS13}
{\sc M.~Gross and B.~Siebert}, {\em {Logarithmic Gromov-Witten invariants}}, J.
  Amer. Math. Soc., 26 (2013), pp.~451--510.

\bibitem{HM82}
{\sc J.~Harris and D.~Mumford}, {\em {On the Kodaira dimension of the moduli
  space of curves}}, Invent. Math., 67 (1982), pp.~23--86.

\bibitem{KKMSD}
{\sc G.~Kempf, F.~Knudsen, D.~Mumford, and B.~Saint-Donat}, {\em Toroidal
  embeddings {I}}, Lecture Notes in Mathematics, 339 (1973).

\bibitem{LZ99}
{\sc S.~Lando and D.~Zvonkine}, {\em On multiplicities of the
  {L}yashko-{L}ooijenga mapping on discriminant strata}, Funct. Anal. Appl., 33
  (1999), pp.~178--188.

\bibitem{Mi03}
{\sc G.~Mikhalkin}, {\em Enumerative tropical geometry in {${\mathbb{R}^2}$}},
  J. Amer. Math. Soc, 18 (2005), pp.~313--377.

\bibitem{Moch}
{\sc S.~Mochizuki}, {\em The geometry of the compactification of the {H}urwitz
  scheme}, PhD thesis, Princeton University, 1992.

\bibitem{OP06}
{\sc A.~Okounkov and R.~Pandharipande}, {\em Gromov-{W}itten theory, {H}urwitz
  theory, and completed cycles}, Ann. of Math., 163 (2006), pp.~517--560.

\bibitem{OP}
\leavevmode\vrule height 2pt depth -1.6pt width 23pt, {\em Gromov-{W}itten
  theory, {H}urwitz numbers, and matrix models}, in Algebraic
  geometry---{S}eattle 2005. {P}art 1, vol.~80 of Proceedings of Symposia in
  Pure Mathematics, American Mathematical Society, Providence, RI, 2009,
  pp.~325--414.

\bibitem{R12}
{\sc J.~Rabinoff}, {\em Tropical analytic geometry, {N}ewton polygons, and
  tropical intersections}, Adv. Math., 229 (2012), pp.~3192--3255.

\bibitem{Thu07}
{\sc A.~Thuillier}, {\em {G{\'e}om{\'e}trie toro{\"\i}dale et g{\'e}om{\'e}trie
  analytique non archim{\'e}dienne. Application au type d'homotopie de certains
  sch{\'e}mas formels}}, Manuscripta Math., 123 (2007), pp.~381--451.

\bibitem{U13}
{\sc M.~Ulirsch}, {\em Functorial tropicalization of logarithmic schemes: The
  case of constant coefficients}, arXiv:1310.6269,  (2013).

\end{thebibliography}
\end{document}